\newcommand{\Nbb}{\ensuremath{\mathbb{N}}}
\newcommand{\Zbb}{\ensuremath{\mathbb{Z}}}
\newcommand{\Rbb}{\ensuremath{\mathbb{R}}}
\newcommand{\Cbb}{\ensuremath{\mathbb{C}}}
\newcommand{\ddfrac}[2]{\displaystyle\frac{#1}{#2}}
\renewcommand{\Im}{\mathop\mathrm{Im}}
\newcommand{\diam}{\mathop\mathrm{diam}}
\newcommand{\na}{\mathop\mathrm{\nabla}}
\newcommand{\Log}{\mathop\mathrm{Log}}
\newcommand{\Spt}{\mathop\mathrm{Supp}}
\newcommand{\Lip}{\mathop\mathrm{Lip}}
\newcommand{\spt}{\mathop\mathrm{Supp}}
\newcommand{\fiproof}{{\hspace*{\fill} $\square$ \vspace{2pt}}}
\def\e{\varepsilon}
\def\pa{\partial}
\def\ov{\overline}
\def\sup{\mathop{\rm sup}}
\def\max{\mathop{\rm max}}
\def\inf{\mathop{\rm inf}}
\def\lim{\mathop{\rm lim}}
\def\D{\Delta}
\def\f{\varphi}
\def\d{\delta}
\def\al{\alpha}
\def\al{\alpha}
\def\ka{\kappa}
\def\la{\lambda}
\def\La{\Lambda}
\def\l{\lambda}
\def\om{\omega}
\def\Ga{\Gamma}
\def\ga{\gamma}
\newtheorem {theorem} {Theorem}
\newtheorem {corollary}{Corollary}
\newtheorem {lemma} {Lemma}[section]
\newtheorem {theor}[lemma]{Theorem}
\newtheorem {defin} [lemma]{Definition}
\newtheorem {example} [lemma]{Example}
\newtheorem {conj} [theorem]{Conjecture}
\theoremstyle{definition}
\numberwithin{equation}{section}
\title{On $C^1$-approximability of functions by solutions of
second order elliptic equations on plane compact sets and $C$-analytic capacity.}
\author{P.V. Paramonov\thanks{Supported by
the Russian Scientific Foundation (grant No. 17-11-01064).} $\;$
and X. Tolsa \thanks{
Partially supported by 2017-SGR-0395 (Catalonia) and MTM-2016-77635-P (MICINN, Spain).
} }
\begin{document}
\maketitle

{\small Criteria for approximability of functions by solutions of homogeneous second order
elliptic equations (with constant complex coeffi\-ci\-ents) in the norms of the Whitney $C^1$-spaces on compact sets in
$\Rbb^2$ are obtained in terms of the respective $C^1$-capacities.
It is proved that the mentioned $C^1$-capacities are comparable to the classic
$C$-analytic capacity, and so have a proper geometric measure characterization.

\bigskip
Bibliography:  25 titles.

\bigskip
 \noindent
 {\bf Key words: }
Second order homogeneous elliptic operator; $C^1$-approximation;
localization operator of Vitushkin type; $L$-oscillation; ${\cal L}C^1$-capacity; $C$-analytic capacity;
curvature of measure.

{\section{Introduction.}\label{s1}}

For the history of the subject under consideration we refer to the survey \cite{mpf2012}.
Let
$$
L({\bf x}) = c_{11}x_1^2 + 2c_{12}x_1x_2 + c_{22}x_2^2\;,\;\,{\bf x}=(x_1, x_2) \in \Rbb^2\,,
$$
be any fixed homogeneous polynomial of second order in $\Rbb^2$ (with constant {\it complex} coefficients $c_{11}, c_{12}, c_{22}$)
that satisfies the {\it ellipticity} condition:
$L({\bf x})\neq 0$ for all ${\bf x} \neq 0$. With the polynomial $L({\bf x})$ we associate the elliptic differential operator
$$
{\cal L} = c_{11}\frac{\partial^2}{\partial x_1^2} + 2c_{12}\frac{\partial^2}{\partial x_1 \partial x_2} + c_{22}\frac{\partial^2}{\partial x_2^2}\,.
$$
The basic examples: the Laplacian $\Delta$ and the Bitsadze operator
$\ov{\partial}^2 \equiv {\partial}^2/{\partial}\ov{z}^2$ in $\Rbb^2$
($z = x_1 + {\bf i} x_2$ is a complex variable).
For an open set $U$ in $\Rbb^2$ set $\,{\cal A}_{\cal L}(U) = \{\, u \in C^2(U)\,|\, {\cal L} u = 0\,$ in $\,U\}\,$.
The functions of this class we call ${\cal L}$-{\it analytic} in $U$. It is well known that
${\cal A}_{\cal L}(U)\subset C^{\infty}(U)$ (see \cite[Theorem 4.4.1]{Ho}).

Denote by $BC^1(U)$ ($U$ is open in $\Rbb^2$) the space of complex valued functions $f$ of class $C^1(U)$
with finite norm
$$
||f||_{1U}=\max \{||f||_U\,,\, ||\nabla f||_U\}\,,
$$
where $||g||_E = \sup_{{\bf x}\in E}|g({\bf x})|$ is the uniform norm of the (vector-) function $g$ on the set $E \neq \emptyset$
(for $U=\Rbb^2=E$ write $BC^1, ||f||_{1}$, $||g||$ respectively).

Let $X \neq \emptyset$ be a compact set in $\Rbb^2$ and $f\in BC^1$. The main problem considered in this paper consists
of the following.

{\it To find  conditions on ${\cal L}$, $X$ and $f$ necessary and sufficient for existence of a sequence
$\{f_n\}^{+\infty}_{n=1}\subset BC^1$ such that each $f_n$ is ${\cal L}$-analytic in (its own)
neighborhood of $X$ and $\|f-f_n\|_1 \to 0$ as $n\to+\infty$.}

The class of all functions $f\in BC^1$ with this approximation property is denoted by ${\cal A}^1_{\cal L}(X)$.
It is not difficult to show that the following condition always takes place: ${\cal A}^1_{\cal L}(X) \subset C^1_{\cal L}(X) = BC^1 \cap {\cal A}_{\cal L}(X^{\circ})$. Therefore, the following $C^1$-approximation problem {\it for classes of functions} naturally appears:

{\it For which compact sets $X$ one has ${\cal A}^1_{\cal L}(X) = C^1_{\cal L}(X)$? }

Recall that analogous approximation problems in the spaces $BC^m$ (see \cite{par2018} for the definitions) are solved for all $m > 0\,,\, m \neq 1$ (even in $\Rbb^N$, $N\geq 3$, \cite{far1}, \cite{ve2}, \cite{par2018}).

An open disc with center ${\bf a}$ and of radius $r > 0$ is denoted by $B({\bf a},r)\,$;
also, for $B=B({\bf a},r)$ and $\la>0$ by $\la B$ we mean the disc
$B({\bf a},\la r)$. Positive parameters (constants), which can depend only on ${\cal L}$, will be designated by
$A$, $A_1$, $A_2$, \dots (they may be different in different occurrences).

Let $\Phi({\bf x})=\Phi_{\cal L}({\bf x})$ be the standard fundamental solution for the equation ${\cal L}u=0$ (see \cite[p. 161]{ve2}
and Section 2 below). We shall basically use the so-called ${\cal L}C^1$-{\it capacity},
which is connected to the operator ${\cal L}$ and the space $BC^1$.
Namely, for a bounded nonempty set $E \subset \Rbb^2$ we let
\begin{equation}\label{kcap1}
\al_1(E) = \al_{1 \cal L}(E) =\sup_{T} \{| \langle T, 1\rangle| \,:\, {\Spt} (T) \subset E ,\, \Phi * T \in C^1(\Rbb^2)\,,\,||\nabla \Phi * T||\leq 1\}\,,
\end{equation}
where $\langle T, \f \rangle$ means the action of the distribution $T$ on the function
$\f \in C^{\infty}$, $*$ -- convolution, and ${\Spt} (T)$ is the support of the distribution (function, measure) $T$.

On the other hand, the ${\cal L}$-Lipschitz capacity of $E$, denoted by $\gamma_1(E)$ or
$\ga_{1 \cal L}(E)$ is defined in the same way as $\alpha_1(E)$, but with the condition $\Phi * T \in C^1(\Rbb^2)$
replaced by $\Phi * T \in {\rm Lip}(\Rbb^2)$.

Recall also the definition of the classic $C$-analytic capacity \cite{Vi}, basically used in the theory of uniform holomorphic approximations. For a bounded (nonempty) set $E \subset \Cbb$,
\begin{equation}\label{kcap}
\al(E) =\sup_{T} \{| \langle T, 1\rangle| \,:\, {\Spt} (T) \subset E ,\, 1/z * T \in C(\Cbb)\,,\,||1/z * T||\leq 1\}\,.
\end{equation}
If in this supremum one replaces the condition $1/z * T \in C(\Cbb)$ by $1/z * T \in L^\infty(\Cbb)$, one gets the well known analytic capacity, $\gamma(E)$, which is specially useful in the study of removable singularities for
bounded holomorphic functions \cite{Ahlfors}.

Clearly, for a bounded {\it open} set $E$ one has $\al_1(E)=\gamma_1(E)$ and $\al(E)=\gamma(E)$.

Our first main result is the following.

\begin{theor}\label{main-th1}
There exist constants $A_1 \in (0, 1)$ and $A_2 \geq 1$ dependent only on ${\cal L}$, such that
\begin{equation}\label{CapComp}
A_1\al(E) \leq \al_1(E) \leq A_2 \al(E)
\end{equation}
and
\begin{equation}\label{CapComp'}
A_1\ga(E) \leq \ga_1(E) \leq A_2 \ga(E)
\end{equation}
for any bounded set $E$.
\end{theor}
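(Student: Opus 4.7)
My strategy rests on the factorization $\mathcal{L}=\mathcal{L}_1\mathcal{L}_2$, where $\mathcal{L}_j=\partial_2-\lambda_j\partial_1$ is a first-order elliptic operator with constant complex coefficient $\lambda_j\in\Cbb\setminus\Rbb$ (a root of the symbol $L$). In the complex-affine coordinate $z_j:=x_2-\lambda_j x_1$, $\mathcal{L}_j$ coincides with $\partial/\partial\ov z_j$ up to a nonzero constant, so a fundamental solution of $\mathcal{L}_j$ is $c/z_j$. From $\mathcal{L}_2(\mathcal{L}_1\Phi)=\delta$ and the fact that the first derivatives of $\Phi$ are homogeneous of degree $-1$, one obtains the key identities
$$
\mathcal{L}_1\Phi = c_1/z_2,\qquad \mathcal{L}_2\Phi = c_2/z_1
$$
(with nonzero constants $c_j$). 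When $\lambda_1\neq\lambda_2$, $\nabla\Phi$ is then an explicit linear combination of $1/z_1$ and $1/z_2$. The repeated-root (Bitsadze) case $\lambda_1=\lambda_2$ is degenerate: $\nabla\Phi$ then contains also a Beurling-type kernel (for $\ov\partial^2$, $\partial\Phi/\partial z=\ov z/(4\pi z^2)$), which will be the principal source of technical difficulty.

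\emph{Upper bound in \eqref{CapComp}.} Let $T$ be admissible for $\al_1(E)$, so $f:=\Phi*T\in C^1$ with $\|\nabla f\|\leq 1$. Since $\mathcal{L}_1 f$ is a linear combination of partials of $f$, it lies in $C(\Rbb^2)$ with $\|\mathcal{L}_1 f\|\leq A$; moreover $\mathcal{L}_1 f=(\mathcal{L}_1\Phi)*T=c_1(1/z_2)*T$. Let $\psi$ denote the $\Rbb$-linear bijection $\mathbf{x}\mapsto(\Re z_2,\Im z_2)$ and set $T':=\psi_*T$. After rescaling, $T'$ is admissible for the standard $C$-analytic capacity of $\psi(E)$, so $|\langle T',1\rangle|\leq A\,\al(\psi(E))$. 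Tolsa's theorem on the bi-Lipschitz invariance of $\al$ gives $\al(\psi(E))\asymp\al(E)$, and since $\langle T',1\rangle=\langle T,1\rangle$, taking a supremum over $T$ yields $\al_1(E)\leq A_2\,\al(E)$.

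\emph{Lower bound in \eqref{CapComp}.} Here I invoke Tolsa's measure-theoretic characterization of $\al$: $\al(E)$ is comparable to the supremum of $\mu(E)$ over positive Borel measures $\mu$ on $E$ with linear growth $\mu(B(\mathbf{x},r))\leq r$, bounded Menger curvature $c^2(\mu)\leq A\mu(E)$, and continuous Cauchy transform. For such $\mu$ these hypotheses (being bi-Lipschitz invariant) ensure that both $(1/z_j)*\mu$ ($j=1,2$) are bounded and continuous. In the non-repeated-root case, $\nabla(\Phi*\mu)$ is then a linear combination of these Cauchy-type transforms, hence itself bounded and continuous; so $\mu$, up to rescaling, is admissible for $\al_1(E)$. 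This gives $\mu(E)\lesssim\al_1(E)$ and hence $\al(E)\lesssim\al_1(E)$.

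The Lipschitz case \eqref{CapComp'} is proved along the same lines, with ``continuous'' replaced by ``$L^\infty$'', ``$C^1$'' by ``Lipschitz'', and Tolsa's $\al$-theorems replaced by their $\ga$-analogues. The main obstacle I anticipate lies in the repeated-root (Bitsadze) case of the lower bounds, where $\nabla\Phi$ also contains a Beurling-type SIO kernel. Controlling its convolution with a curvature-bounded measure $\mu$ requires non-doubling SIO theory (Nazarov--Treil--Volberg / Tolsa) for boundedness, and for the $\al$-version additional work to pass from boundedness to continuity — most likely via the Vitushkin-type localization and $L$-oscillation techniques advertised in the paper's keywords.
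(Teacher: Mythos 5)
Your upper-bound argument (the estimate $\al_1(E)\leq A_2\al(E)$ and its $\ga$ analogue) is essentially the paper's: reduce to the Cauchy kernel in the affine coordinate $z_2$ via a push-forward of $T$, and invoke the bi-Lipschitz invariance of $\al$ and $\ga$ (or, as the paper also notes, the curvature characterization directly). That part is sound.

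The lower bound has a genuine gap, and it is not confined to the repeated-root case. You assert that Tolsa's characterization of $\al$ produces a measure $\mu\in\Sigma_0(E)$ with linear growth, bounded curvature, \emph{and continuous Cauchy transform}, and that these properties then carry over to $(1/z_j)*\mu$ by bi-Lipschitz invariance. But the characterizations \eqref{eqgam} and \eqref{eqalph} give only linear growth, (for $\al$) vanishing density, and $L^2(\mu)$-boundedness of the Cauchy transform; they say nothing about $L^\infty$-boundedness, let alone continuity, of $\mathcal C\mu$ itself, and in general these fail. Consequently the measure you extract is \emph{not} admissible for $\al_{1}$ or $\ga_{1}$ as it stands. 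The missing ingredient is a Davie--{\O}ksendal-type dualization: from $L^2(\mu)$-boundedness of $T_{K_1,\mu}$ and $T_{K_2,\mu}$ (which does follow, via Theorem \ref{teopubmat}, from $L^2$-boundedness of $\mathcal C_\mu$ — this is where the non-doubling Calder\'on--Zygmund input really enters, and it is needed for \emph{both} kernels and \emph{both} root cases, not just the Beurling-type kernel), one produces a weight $0\leq h\leq 1$ on $\spt\mu$ with $\int h\,d\mu\gtrsim\mu(E)$ and $\|T_{K_j}(h\mu)\|_\infty\leq A$; this is Lemma \ref{lemoks} and already closes the $\ga$-inequality. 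For the $\al$-inequality one must additionally arrange that $T_{K_j}(h\mu)$ extend continuously to $\Cbb$; the paper does this not by Vitushkin-type localization (as you guess) but by the iterative scheme of Lemmas \ref{norma}--\ref{lemsuc} (Ruiz de Villa--Tolsa), producing a sequence $h_n$ with equicontinuous smoothly truncated potentials $T_{j,(\e)}h_n$ and passing to a weak-$*$ limit plus Ascoli--Arzel\`a. Without this construction, the claim ``$(1/z_j)*\mu$ are bounded and continuous'' is unjustified, and the lower bounds do not follow.
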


Let us remark that the capacities $\al$ and $\ga$ admit a characterization in terms of measures with linear growth and finite curvature, by \cite{tol04} and \cite{Tolsa-sem} (see \eqref{eqalph} and \eqref{eqgam}). This characterization extends to $\al_1$ and $\ga_1$ by the last theorem. In particular, the capacities $\al_1$ and $\ga_1$ are countably semiadditive.

For the above mentioned elliptic polynomial $\, L({\bf x})$, $\,f\in C(\Rbb^2)\,$ and a disc
$\,B=B({\bf a}, r)\,$ define the so-called $\,L$-{\it oscillation} of $\,f\,$ on $\,B\,$ (see \cite{par2018}):
$$
{\cal O}^L_{B} (f) = \frac{1}{2\pi r}\int_{\partial B} f({\bf x})
\frac{L({\bf x}-{\bf a})}{r^2} d\ell_{\bf x} - \frac{c_{11}+c_{22}}{2\pi r^2} \int_B f({\bf x})d{\bf x}\;,
$$
where $\ell$ is the Lebesgue measure (the length) on
$\partial B$.

For instance, when $\, L({\bf x})= x_{1}^2+x_2^2\,$ (that is ${\cal L}=\Delta$), we have (first time in analogous context
appeared in \cite{MPV}):
$$
{\cal O}^L_{B} (f) = \frac{1}{2\pi r}\int_{\partial B} f({\bf x})\,
d\ell_{\bf x} - \frac{1}{\pi r^2} \int_B f({\bf x})d{\bf x}\;,
$$
and for $\, L({\bf x}) = 4^{-1} (x_1 +  {\bf i} x_2)^2 = z^2/4\,$ (e.g. ${\cal L} = {\partial}^2/{\partial}\ov{z}^2$):
$$
{\cal O}^L_{B} (f) = \frac{1}{8\pi {\bf i} r^2}\int_{\partial B} f(z) (z-a)\,dz \;.
$$

Now we formulate our second main result.
Fix a compact set $X \subset \Rbb^2$ and $f\in BC^1$. Without loss of generality we shall suppose that
${\Spt} (f)$ is compact, e.g. $f \in C^1_0(\Rbb^2)$. Let $\om (g,r)$ be the modulus of continuity
of the (vector-)function $g$ on $\Rbb^2$.

\begin{theor}\label{main-th}
The following conditions are equivalent:

(a) $\, f \in {\cal A}^1_{\cal L}(X)\,$;

(b) there exist $k \ge 1$ and a function $\om (r)\to 0$ as $r\to 0+$ such that for each disk
$B=B({\bf a}, r)$ one has
$$
\left|{\cal O}^L_{B({\bf a}, r)} (f) \right| \le
\om (r)\, \al_1(B({\bf a}, kr)\setminus X)\;;
$$

(c) the property (b) holds for $k=1$ and $\om (r)=A\om (\nabla f, r)$.
\end{theor}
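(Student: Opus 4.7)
The implication $(c)\Rightarrow(b)$ is immediate, so the content lies in closing the loop via $(a)\Rightarrow(c)$ and $(b)\Rightarrow(a)$.

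For $(a)\Rightarrow(c)$, I would first check directly from the definition that $\mathcal{O}^L_B$ annihilates every polynomial of degree $\leq 1$: constants cancel by a mass balance, and affine terms average to the same value over $\partial B$ and $B$ by the symmetry of $B$ about $\mathbf{a}$. Thus, subtracting the first Taylor polynomial $\ell_\mathbf{a}$ of $f$ at $\mathbf{a}$ preserves $\mathcal{O}^L_B(f)$ while reducing us to $\tilde f=f-\ell_\mathbf{a}$ with $\|\nabla\tilde f\|_B\leq\omega(\nabla f,r)$. Since $\mathcal{O}^L_B$ is continuous on $BC^0$, the approximation hypothesis $f\in\mathcal{A}^1_\mathcal{L}(X)$ lets us assume $f$ is $\mathcal{L}$-analytic on an open $U\supset X$. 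A Green's identity applied with $L(\mathbf{x}-\mathbf{a})/r^2$ represents $\mathcal{O}^L_B(\tilde f)$ as the distributional pairing of $\mathcal{L}\tilde f$ against a test function supported in $\ov B$; because $\mathcal{L}\tilde f=0$ on $U$, the distribution $T=\mathcal{L}(\tilde f\,\eta)$, for a cutoff $\eta\equiv 1$ on $B$ supported in $2B$, satisfies $\Spt T\subset 2B\setminus X$, and $\Phi*T=\tilde f\,\eta$ modulo an $\mathcal{L}$-analytic correction, so $\|\nabla(\Phi*T)\|\leq A\omega(\nabla f,r)$. Plugging the rescaled distribution $T/(A\omega(\nabla f,r))$ into the definition \eqref{kcap1} yields (b) with $k=2$; a sharper cutoff and a covering argument upgrade this to $k=1$.

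For $(b)\Rightarrow(a)$ I would apply the Vitushkin-type localization scheme adapted to $BC^1$, as developed in \cite{par2018}. Fix $\varepsilon>0$ and a small $r>0$; cover $\Spt f$ by a grid of discs $B_i=B(\mathbf{a}_i,r)$ with bounded overlap and take a subordinate $C^\infty$ partition of unity $\{\varphi_i\}$ with $\|\nabla\varphi_i\|\leq A/r$. Set $V_{\varphi_i}f=\varphi_i f-\Phi*[\mathcal{L},\varphi_i]f$; then $\mathcal{L}(V_{\varphi_i}f)=\varphi_i\mathcal{L}f$, each $V_{\varphi_i}f$ is $\mathcal{L}$-analytic off $\Spt\varphi_i$, $\sum_i V_{\varphi_i}f=f$, and $\|V_{\varphi_i}f\|_1\leq A\|f\|_1$ uniformly in $i$. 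Hypothesis (b) applied to $V_{\varphi_i}f$ on $kB_i$ produces a single scalar obstruction $|\mathcal{O}^L_{kB_i}(V_{\varphi_i}f)|\leq A\omega(r)\alpha_1(kB_i\setminus X)$ which must be cleared by finding, for each $i$, an $h_i\in BC^1$ that is $\mathcal{L}$-analytic in a neighborhood of $X$, supported near $B_i$, and satisfies $\|V_{\varphi_i}f-h_i\|_1$ of the same order. Summing $h=\sum_i h_i$ and controlling the total error by the countable semiadditivity of $\alpha_1$ (guaranteed by Theorem \ref{main-th1} together with the semiadditivity of $\alpha$ from \cite{tol04}) yields an $\mathcal{L}$-analytic-near-$X$ approximant of $f$ in $BC^1$.

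The principal obstacle is precisely the local step in $(b)\Rightarrow(a)$: extracting $h_i$ from a single scalar oscillation bound on $kB_i$. This is a functional-analytic extremal problem asserting that $\mathcal{O}^L_{kB_i}$ is the \emph{only} local obstruction to $BC^1$-approximation by $\mathcal{L}$-analytic-near-$X$ functions, and that the admissible error is measured precisely by $\alpha_1(kB_i\setminus X)$. The duality link between capacity and approximation is classical for $\alpha$ in the uniform norm; transporting it to $\alpha_1$ in the $C^1$-setting is exactly what Theorem \ref{main-th1} enables, by identifying $\alpha_1$ with $\alpha$ (and hence with the curvature-of-measure characterization of \cite{tol04}), so that the Tolsa-type construction of test measures becomes available at the $C^1$-level.
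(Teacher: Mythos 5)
Your outline correctly identifies the logical skeleton and the role of Vitushkin-type localization, but both nontrivial implications contain genuine gaps. In $(a)\Rightarrow(c)$, the distribution $T=\mathcal{L}(\tilde f\,\eta)$ does \emph{not} have support in $2B\setminus X$: writing $\mathcal{L}(\tilde f\eta)=\eta\,\mathcal{L}\tilde f+[\mathcal{L},\eta]\tilde f$, the commutator part is carried by $\Spt(\nabla\eta)\subset\overline{2B}\setminus B$, which may very well meet $X$. The vanishing of $\mathcal{L}\tilde f$ on $U\supset X$ kills the first term near $X$ but not the second. The correct device (Lemma \ref{lemC1Vit}) is the dual localization ${\cal V}_\varphi f=\Phi*(\varphi\,\mathcal{L}f)$: then $\mathcal{L}({\cal V}_\varphi f)=\varphi\,\mathcal{L}f$ is automatically supported in $\Spt\varphi\cap\Spt(\mathcal{L}f)\subset B\setminus X$, the pairing $\langle\varphi,\mathcal{L}f\rangle$ is, up to the constant $c_{11}$, exactly the oscillation, and choosing $\varphi$ supported in $B({\bf a},r-\varepsilon)$ gives $k=1$ outright — no covering argument is required or would rescue your construction.

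The gap in $(b)\Rightarrow(a)$ is the deeper one, and although you correctly locate it, you do not supply the mechanism that closes it. Matching the scalar $c_0$ per disc is easy from the definition of $\alpha_1$, but the resulting correction $g_j=f_j-f_j^*$ still has $|\nabla g_j(z)|\sim \delta\,\alpha_1(G_j)/|z-a_j|^2$ at infinity, and summing these over a $\delta$-grid does \emph{not} give a uniformly small bound (a logarithmic divergence appears). One cannot, in general, clear each disc with $\|V_{\varphi_j}f-h_j\|_1$ small, and the semiadditivity of $\alpha_1$ does not repair this: the obstruction is the slow decay of potentials with only $c_0$ matched, not subadditivity of capacity. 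The paper's resolution (Lemma \ref{LocEst}, Definitions \ref{def1}--\ref{main}) is to additionally match the first-order Laurent coefficients $c_1^1,c_1^2$, and crucially this is done \emph{not} disc by disc but by organizing indices into complete $1$- and $2$-chains and groups whose combined potentials realize the two linearly independent Laurent directions with pointwise decay $|\nabla\chi^n(z)|\lesssim\omega(\delta)S_{\Gamma^n}(z)$ that is summable over the grid. This chain/group coordination is the heart of the proof and is absent from your scheme. Finally, the appeal to Theorem \ref{main-th1} is misplaced here: Section 3 proves Theorem \ref{main-th2} using only the trivial bound $\alpha_1(B(a,r))\leq Ar$ and is logically independent of the capacity comparison, which is established separately in Section 4.
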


The plan of the paper is the following:

In Section 2 we give some preliminary results.

In Section 3 we prove Theorem \ref{main-th2} (more general than Theorem \ref{main-th}).

In Section 4 we prove Theorem \ref{main-th1} and recall the main properties of $C$-analytic capacity.

In Section 5 we present some corollaries of our main results: the $C^1$-approximation
criteria for classes of functions and corresponding criteria for the $C^1$-approximation by ${\cal L}$-polynomials
on plane compact sets.

{\section{Background.}

The next lemma is proved in \cite{par2018}.

\begin{lemma}\label{hat-osc}
For ${\bf a} \in \Rbb^2$ and $r\in (0, +\infty)$ let $\psi_r^{\bf a}({\bf x})=(r^2-|{\bf x}-{\bf a}|^2)/(4\pi r^2)$ in $B = B({\bf a}, r)$
and $\psi_r^{\bf a}({\bf x})=0$ outside of $B({\bf a}, r)$. Then  for all $\f \in C^{\infty} (\Rbb^2)$ one has:
$$
\int_{B} \psi_r^{\bf a} ({\bf x}) {\cal L}\f ({\bf x})\,d{\bf x} = {\cal O}^L_{B} (\f)\,,
$$
that is, the action $\langle {\cal L} \psi_r^{\bf a}\,, \, \f \rangle$ of the distribution ${\cal L} \psi_r^{\bf a}$ on the function
$\f$ is equal to ${\cal O}^L_{B} (\f)$ (and it can be continuously extended on all class of functions $\f \in C(\Rbb^2)$).
\end{lemma}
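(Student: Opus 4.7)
The plan is a direct computation via integration by parts, exploiting the facts that $\psi_r^{\bf a}$ vanishes on $\partial B$, that it has a very simple Laplacian (and hence a simple ${\cal L}$), and that its outward normal derivative on $\partial B$ is essentially the normal vector itself. I would translate to the case ${\bf a}=0$ (and write $\psi(\mathbf{x})=(r^2-|\mathbf{x}|^2)/(4\pi r^2)$) since the operator $\mathcal L$ is translation invariant; the general case then follows by replacing $\mathbf{x}$ with $\mathbf{x}-\mathbf{a}$ throughout.

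First I would record the two key pointwise identities on $B$: a direct differentiation gives $\partial_i\psi=-x_i/(2\pi r^2)$, $\partial_i\partial_j\psi=-\delta_{ij}/(2\pi r^2)$, whence
$$
\mathcal L\psi=c_{11}\partial_1^2\psi+2c_{12}\partial_1\partial_2\psi+c_{22}\partial_2^2\psi=-\frac{c_{11}+c_{22}}{2\pi r^2}.
$$
Moreover $\psi=0$ on $\partial B$ and the outward unit normal to $\partial B$ is $\mathbf n=\mathbf{x}/r$, so $\partial_i\psi\cdot n_j|_{\partial B}=-x_ix_j/(2\pi r^3)$.

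Next, for each pair $(i,j)$ I would integrate by parts twice in $\int_B\psi\,\partial_i\partial_j\varphi\,d\mathbf{x}$. The first step kills the boundary term because $\psi|_{\partial B}=0$, giving $-\int_B\partial_i\psi\,\partial_j\varphi\,d\mathbf{x}$. The second step produces both a bulk term $\int_B\partial_i\partial_j\psi\,\varphi\,d\mathbf{x}$ and a boundary term $-\int_{\partial B}\partial_i\psi\,\varphi\,n_j\,d\ell = \int_{\partial B}\varphi\,\frac{x_ix_j}{2\pi r^3}\,d\ell$. Summing these identities with weights $c_{11},2c_{12},c_{22}$ and using the pointwise value of $\mathcal L\psi$ computed above yields
$$
\int_B\psi\,\mathcal L\varphi\,d\mathbf{x}=-\frac{c_{11}+c_{22}}{2\pi r^2}\int_B\varphi\,d\mathbf{x}+\frac{1}{2\pi r^3}\int_{\partial B}\varphi\,L(\mathbf{x})\,d\ell,
$$
which is exactly $\mathcal O^L_B(\varphi)$ by definition.

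Finally, to justify the parenthetical continuous extension to $\varphi\in C(\mathbf{R}^2)$, I would observe that both sides of the resulting identity depend on $\varphi$ only through its restriction to $\ov B$, that the right-hand side $\mathcal O^L_B(\varphi)$ is manifestly a continuous linear functional of $\varphi|_{\ov B}$ in the uniform norm, and hence the distribution $\mathcal L\psi_r^{\mathbf a}$ acts on all of $C(\mathbf{R}^2)$ as this functional. There is no serious obstacle here; the only care needed is with signs and with the fact that the mixed-partial contribution enters symmetrically (so the coefficient $2c_{12}$ matches the monomial $2c_{12}x_1x_2$ in $L$), and both checks are immediate from the computation above.
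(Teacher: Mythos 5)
Your computation is correct: the double integration by parts on $B$, using $\psi_r^{\bf a}|_{\partial B}=0$ to annihilate the first boundary term and then the explicit formulas $\partial_i\psi_r^{\bf a}=-(x_i-a_i)/(2\pi r^2)$, $\partial_i\partial_j\psi_r^{\bf a}=-\delta_{ij}/(2\pi r^2)$ (hence ${\cal L}\psi_r^{\bf a}=-(c_{11}+c_{22})/(2\pi r^2)$) on $B$, together with $\sum_{i,j}c_{ij}x_ix_j=L({\bf x})$, reproduces exactly the boundary and bulk pieces of ${\cal O}^L_B(\varphi)$, and the passage to $\varphi\in C(\Rbb^2)$ by density is immediate since both sides are continuous linear functionals of $\varphi|_{\overline{B}}$ in the uniform norm. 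The paper itself defers the proof of this lemma to the reference \cite{par2018}, but your argument is the natural and essentially canonical one.
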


For a given $\f \in C_0^\infty(\Rbb^2)$ define the Vitushkin type localization operator (see \cite{Vi}, \cite{ve2})
corresponding to the operator ${\cal L}$:
\begin{equation}\label{eq3.0}
f \mapsto   {\cal V}_{\f}(f) =\Phi* (\f {\cal L} f) \;,\; f\in C(\Rbb^2)\,.
\end{equation}
The basic property of this operator consists of the following simple fact: ${\cal L} ({\cal V}_{\f}(f))=\f {\cal L} f$,
which means that ${\cal L}$-singularities of ${\cal V}_{\f}(f)$ are contained in the intersection of the ${\cal L}$-singularities of $f$ and $\Spt \f$.
We now present one new property of operator ${\cal V}_{\f}$ connected to the possibility of its extension to some wider class
of "indices" $\, \f$ whenever $f\in C^1$. For a compact set $X \subset \Rbb^2$ put $C^1(X)=C^1(\Rbb^2)|_X$.

\begin{lemma}\label{lemC1Vit}
Fix any function $\f \in C^1(\ov{B({\bf a},r)})$, $\f=0$ outside of $B({\bf a},r)$. Then for each $f \in C^1_0(\Rbb^2)$
the following properties hold:

(a) the function ${\cal V}_{\f}(f) \in C^1({\Rbb}^2)$ is well defined and
\[
   ||\nabla ({\cal V}_\f (f))||\leq A\,\om (\nabla f, r)\,||\nabla\f||\,r\,;
\]

(b) $\Spt ({\cal L}{\cal V}_{\f}(f)) \subset \Spt ({\cal L} f) \cap \Spt (\f)$;

(c) if $f \in C^2(\Rbb^2)$ then ${\cal L} ({\cal V}_{\f}(f))=\f {\cal L} f$.
\end{lemma}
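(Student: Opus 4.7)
Parts (b) and (c) follow quickly from the defining property ${\cal L}\Phi = \delta_0$. For $f\in C^2$ one has ${\cal L}{\cal V}_\f(f) = {\cal L}(\Phi*(\f{\cal L}f)) = \f{\cal L}f$, giving (c); and $\Spt(\f{\cal L}f) \subset \Spt(\f)\cap\Spt({\cal L}f)$ together with the hypoellipticity of ${\cal L}$ gives (b).

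For part (a), the main task, I would first reduce to the case where the function vanishes to first order at ${\bf a}$. Let $P({\bf x}) = f({\bf a}) + \na f({\bf a})\cdot({\bf x}-{\bf a})$ be the first-order Taylor polynomial of $f$ at ${\bf a}$, and set $g = f - P$. Since ${\cal L}P = 0$, we have ${\cal V}_\f(g) = {\cal V}_\f(f)$. On $B=B({\bf a},r)$, $g$ satisfies $g({\bf a}) = 0$, $\na g({\bf a}) = 0$, $|g| \leq A\,\om(\na f,r)\,r$, and $|\na g| \leq \om(\na f,r)$. I would prove the estimate first for $g \in C^2$ and extend to $g\in C^1$ by density: if $g_n\to g$ in $C^1$, the uniform bound applied to $g_n - g_m$ shows $\{{\cal V}_\f(g_n)\}$ is Cauchy in $C^1$, defining ${\cal V}_\f(g)$ as the $C^1$-limit.

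For $g\in C^2$, starting from $\na{\cal V}_\f(g)({\bf x}) = \int \na\Phi({\bf x}-{\bf y})\f({\bf y}){\cal L}g({\bf y})\,d{\bf y}$, integrate by parts twice to distribute $\partial_i\partial_j g$ onto $\f$ and onto $\na\Phi({\bf x}-\cdot)$; the contribution in which both derivatives fall on $\na\Phi$ equals $\na(g\f)({\bf x})$ via $\sum c_{ij}\partial_i\partial_j\na\Phi = \na{\cal L}\Phi = \na\delta_0$. This yields the key identity
$$\na {\cal V}_\f(g)({\bf x}) = \na(g\f)({\bf x}) - \sum_{i,j} c_{ij}\, \na\Phi * (\partial_j g\,\partial_i\f)({\bf x}) - \sum_{i,j} c_{ij}\, \partial_i\na\Phi * (g\,\partial_j\f)({\bf x}).$$
The first term satisfies $|\na(g\f)| \leq 2\om(\na f,r)\|\na\f\|\,r$ by Leibniz. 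The second is bounded by $A\,\om(\na f,r)\|\na\f\|\,r$ from $\|\partial_j g\,\partial_i\f\|_\infty \leq \om(\na f,r)\|\na\f\|$ and $\int_B |\na\Phi({\bf x}-{\bf y})|\,d{\bf y} \leq A\,r$ (uniformly in ${\bf x}$, since $\na\Phi \sim |{\bf x}|^{-1}$ in $\Rbb^2$).

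The third term carries a Calderón--Zygmund-type kernel $\partial_i\na\Phi\sim|{\bf x}-{\bf y}|^{-2}$, which is not $L^\infty$-bounded on arbitrary $L^\infty_c$ functions, and this is the main obstacle. The key gain is the refined bound $|g({\bf y})\partial_j\f({\bf y})| \leq \om(\na f,r)\|\na\f\|\,|{\bf y}-{\bf a}|$ on $B$ (from $g({\bf a})=0$). I would use the integral representation $g({\bf y}) = ({\bf y}-{\bf a})\cdot \int_0^1 \na g({\bf a}+t({\bf y}-{\bf a}))\,dt$, write $({\bf y}-{\bf a}) = ({\bf y}-{\bf x}) + ({\bf x}-{\bf a})$, and split the convolution: in one piece the factor $({\bf y}-{\bf x})$ cancels one power of the singularity, reducing the kernel to an integrable $\sim |{\bf x}-{\bf y}|^{-1}$; in the other, the factor $|{\bf x}-{\bf a}| \leq 2r$ (for ${\bf x}\in 2B$) multiplies a principal-value singular integral whose pointwise modulus is controlled via the zero angular average of $\partial_i\na\Phi$ together with the vanishing of $\f$ on $\partial B$. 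For $|{\bf x}-{\bf a}|\geq 2r$ the kernel satisfies $|\partial_i\na\Phi({\bf x}-{\bf y})|\leq A/|{\bf x}-{\bf a}|^2$ and a direct pointwise estimate using $|g({\bf y})| \leq \om(\na f,r)\,|{\bf y}-{\bf a}|$ suffices. The careful principal-value analysis for ${\bf x}\in 2B$ is the technical heart of the proof.
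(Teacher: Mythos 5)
Your treatment of (b), (c), the density reduction, and the Leibniz-type identity
$$\na {\cal V}_\f(g)=\na(g\f)-\sum_{i,j}c_{ij}\,\na\Phi*(\pa_j g\,\pa_i\f)-\sum_{i,j}c_{ij}\,\pa_i\na\Phi*(g\,\pa_j\f)$$
is fine, and the first two terms are estimated correctly. The gap is in the third term, and it comes from where you center the Taylor expansion: you center at the ball center ${\bf a}$, so $g$ vanishes at ${\bf a}$, but the singularity of the kernel $\pa_i\na\Phi({\bf x}-\cdot)\sim|{\bf x}-\cdot|^{-2}$ sits at ${\bf y}={\bf x}$, not at ${\bf y}={\bf a}$. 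After you write $({\bf y}-{\bf a})=({\bf y}-{\bf x})+({\bf x}-{\bf a})$, the $({\bf y}-{\bf x})$ part indeed tames the kernel, but the leftover piece is $({\bf x}-{\bf a})$ times a genuine principal-value singular integral of $\psi=H\,\pa_j\f$, where $H({\bf y})=\int_0^1\na g({\bf a}+t({\bf y}-{\bf a}))\,dt$ is only bounded and continuous and $\pa_j\f$ is only in $C(\ov B)$. A zero-angular-average Calder\'on--Zygmund kernel does \emph{not} map $L^\infty$ (or $C^0$) into $L^\infty$ pointwise; the standard cancellation trick requires at least Dini continuity of $\psi$ at ${\bf x}$, which you do not have ($\om(\na f,\cdot)$ need not be Dini, and the modulus of $\pa_j\f$ is not under control). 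The appeal to ``vanishing of $\f$ on $\pa B$'' does not help either, since $\pa_j\f$ does not vanish on $\pa B$. So the p.v.\ integral in the leftover term has no pointwise bound of the claimed form, and the argument does not close.

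The fix in the paper is precisely to center the first-order Taylor polynomial at the \emph{evaluation point} ${\bf x}$ (when ${\bf x}\in B({\bf a},2r)$; at ${\bf a}$ when ${\bf x}$ is far, where $\f({\bf x})=0$ kills the boundary term). With $F({\bf y})=f({\bf y})-f({\bf x})-\na f({\bf x})\cdot({\bf y}-{\bf x})$ one has $|F({\bf y})|\le 3\om(\na f,r)|{\bf y}-{\bf x}|$ on $\ov{B({\bf a},r)}$, so $F\pa_i\f$ vanishes like $|{\bf y}-{\bf x}|$ exactly at the kernel singularity, and the would-be p.v.\ integral $\int\pa_k\pa_j\Phi({\bf x}-{\bf y})F({\bf y})\pa_i\f({\bf y})\,d{\bf y}$ is absolutely convergent and bounded by $A\om(\na f,r)\|\na\f\|\,r$ with no singular-integral theory required. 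The choice $\f({\bf x})F({\bf x})=0$ also kills the boundary term $\na(\f F)$ entirely, sidestepping the $C^1$-jump of $\f\,g$ across $\pa B$ that your decomposition carries. This ${\bf x}$-dependent recentering is the essential idea missing from your argument.
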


\begin{proof}
We prove (a); (b) and (c) then follow from usual regularization arguments.
From the last mentioned arguments we also can additionally suppose that $f \in C_0^{\infty}(\Rbb^2)$.
Then it can be easily seen that ${\cal V}_\f (f) \in C^1({\Rbb}^2)$. Let $\om_1 (r)=\om(\nabla f, r)$.
{\it Fix} ${\bf x} \in \Rbb^2$ and for ${\bf y}\in \Rbb^2$ set
$$
   F({\bf y})= f({\bf y})-f({\bf x})-\nabla f({\bf x})\cdot({\bf y}-{\bf x})\;,\; \mbox{if} \;\, {\bf x} \in B({\bf a}, 2r)\,,
$$
$$
   F({\bf y})= f({\bf y})-f({\bf a})-\nabla f({\bf a})\cdot({\bf y}-{\bf a})\;,\; \mbox{if} \;\, {\bf x} \notin B({\bf a}, 2r)\,.
$$
Then ${\cal L} F = {\cal L} f$ and for all ${\bf y} \in \ov{B(a,r)}$ we have:
\begin{equation}\label{eq3.1}
|F({\bf y})|\leq 3\om_1(r) |{\bf y}-{\bf x}| \;, \; |\nabla F({\bf y})|\leq 3\om_1(r) \;, \; \mbox{if} \;\, {\bf x} \in B({\bf a}, 2r),
\end{equation}
\begin{equation}\label{eq3.1.1}
|F({\bf y})|\leq r\om_1(r) \;, \; |\nabla F({\bf y})|\leq \om_1(r) \;, \; \mbox{if} \;\, {\bf x} \notin B({\bf a}, 2r).
\end{equation}
Let $\pa_jg({\bf y})=\pa g({\bf y})/\pa y_j$ and $c_{21}=c_{12}$. Then
$$
\f {\cal L} F = {\cal L} (\f F) - F {\cal L} \f - 2 \sum_{i, j = 1}^2 c_{i j} \pa_{i}\f \pa_j F \,,
$$
so, substituting the last equality to \eqref{eq3.0} and taking into account that $\f ({\bf x}) F({\bf x})=0$, we obtain:
\begin{align*}
{\cal V}_\f (f)({\bf x}) & = \langle \Phi({\bf x}-{\bf y}), \f ({\bf y})\,{\cal L} F ({\bf y})\rangle
-\langle \Phi({\bf x}-{\bf y}), F ({\bf y})\, {\cal L} \f ({\bf y})\rangle - 2 \sum_{i, j = 1}^2 c_{i j} \langle \Phi({\bf x}-{\bf y}),
\pa_{i}\f ({\bf y}) \, \pa_j F ({\bf y})\rangle\,.
\end{align*}
From the equality $F\pa_i \pa_j\f = \pa_j (F\pa_i \f) - \pa_i \f \pa_j F$ we find that
\begin{align*}
-\langle \Phi({\bf x}-{\bf y}), F ({\bf y})\, {\cal L} \varphi ({\bf y})\rangle
= \sum_{i, j=1}^2 c_{ij}
\big(\langle \partial_j\Phi({\bf x}-{\bf y}), F ({\bf y})\,\partial_i \varphi ({\bf y})\rangle + \langle \Phi({\bf x}-{\bf y}),
   \pa_i\varphi ({\bf y})\,\pa_j F ({\bf y})\rangle \big)\,,
\end{align*}
which gives that
$$
{\cal V}_\f (f)({\bf x}) = \sum_{i, j=1}^2 c_{ij}
(\langle \pa_j\Phi({\bf x}-{\bf y}), F ({\bf y})\,\pa_i \f ({\bf y})\rangle - \langle \Phi({\bf x}-{\bf y}),
   \pa_i\f ({\bf y})\,\pa_j F ({\bf y})\rangle )\,,
$$
$$
\pa_k {\cal V}_\f (f)({\bf x}) = \sum_{i, j=1}^2 c_{ij}
(\langle \pa_k \pa_j\Phi({\bf x}-{\bf y}), F ({\bf y})\,\pa_i f ({\bf y}))\rangle - \langle \pa_k \Phi({\bf x}-{\bf y}),
   \pa_i\f ({\bf y})\,\pa_j F ({\bf y})\rangle )\,.
$$
From the last equality it follows that it remains to estimate the following expressions:
\begin{equation}\label{eq3.2}
\langle \pa_k\pa_j \Phi({\bf x}-{\bf y}), F ({\bf y})\,\pa_i\f ({\bf y})\rangle \,,
\end{equation}
and
\begin{equation}\label{eq3.3}
\langle \pa_k\Phi({\bf x}-{\bf y}), \pa_i\f ({\bf y})\,\pa_j F ({\bf y})\rangle
\end{equation}
for all possible triples $\{k, i, j\}$.

Since $|\nabla \Phi ({\bf y})| \leq A/|{\bf y}|$ (see Lemma \ref{solrep} below), by \eqref{eq3.1} and \eqref{eq3.1.1} the absolute value in \eqref{eq3.3} can be easily estimated by the following convergent integral:
$$
A_1||\nabla \f|| \,\om_1(r) \int_{B({\bf a},r)} \frac{d{\bf y}}{|{\bf x}-{\bf y}|}\,
\leq A\om_1(r) r ||\nabla \f||\,.
$$

It is not so directly simple to estimate \eqref{eq3.2}, because the kernel
$$
|\pa_k\pa_j \Phi ({\bf x}-{\bf y})| \simeq |{\bf x}-{\bf y}|^{-2}
$$
is not locally integrable with respect to the Lebesgue measure in $\Rbb^2$. Nevertheless, according to \cite[Lemma 1.1]{ve2}, for the function $\chi \in C_0(\Rbb^2)$ (properly tending to $0$ as ${\bf y} \to {\bf x}$) one has:
$$
\langle \pa_k\pa_j\Phi ({\bf x}-{\bf y}), \chi ({\bf y})\rangle = (v.p)\int {\cal K}_{kj}({\bf x}-{\bf y})\chi ({\bf y})\,d{\bf y}\,,
$$
where each $\,{\cal K}_{kj}\,$ is a standard (of class $C^{\infty}$ outside ${\bf 0}$, homogeneous of order $-2$, with zero
average over $\partial B({\bf 0}, 1)$) Calderon-Zygmund kernel in $\,\Rbb^2\,$ with respect to Lebesgue measure.
In our case, the function $\chi ({\bf y}) = F ({\bf y}) \pa_i \f ({\bf y})$ tends to zero like
$|{\bf y} - {\bf x}|$ as ${\bf y} \to {\bf x}$, because of \eqref{eq3.1} and since $\f=0$ outside of $B({\bf a},r)$.

Therefore, the last integral (in the principle value sense) as a matter of fact is absolutely convergent and can be estimated
for ${\bf x} \in B({\bf a}, 2r)$ as follows (using \eqref{eq3.1}):
\begin{multline*}
   \int\left | \pa_k\pa_j\Phi({\bf x}-{\bf y}) F({\bf y}) \pa_i
   \f ({\bf y})\right|\,d{\bf y} \leq  A_1\om_1(r) ||\nabla \f||\int_{B({\bf a},r)} \frac{d{\bf y}}{|{\bf x}-{\bf y}|}  \leq A \om_1(r) r ||\nabla \f||\,.
\end{multline*}
For ${\bf x} \notin B({\bf a}, 2r)$ the corresponding estimate is trivial (by \eqref{eq3.1.1}). Lemma \ref{lemC1Vit} is proved.
\end{proof}

Recall the basic properties for solutions of our main equation
$$
{\cal L}u = c_{11}\frac{\partial^2 u}{\partial x_1^2} + 2c_{12}\frac{\partial^2 u}{\partial x_1 \partial x_2} + c_{22}\frac{\partial^2 u}{\partial x_2^2} =0\,.
$$

Let $\l_1, \l_2$ be the roots of the {\it characteristic
equation}
$c_{11}\l^2 + 2c_{12}\l + c_{22} = 0$.
It follows from the ellipticity condition that
$\l_1, \l_2 \notin \Rbb$.
Define
 \begin{equation} \notag
 \begin{array}{lll}
 \pa_1 = \ddfrac{\pa}{\pa x_1} - \l_1\ddfrac{\pa}{\pa x_2},
 &\pa_2 = \ddfrac{\pa}{\pa x_1} - \l_2\ddfrac{\pa}{\pa x_2}
 &\quad \mbox{if $\l_1 \neq \l_2$,}\\
 \end{array}
 \end{equation}
 or
 \begin{equation} \notag
 \begin{array}{lll}
 \pa_1 = \ddfrac{\pa}{\pa x_1} - \l_1\ddfrac{\pa}{\pa x_2},
 &\pa_2 = \ddfrac{\pa}{\pa x_1} + \l_1\ddfrac{\pa}{\pa x_2}
 &\quad \mbox{if $\l_1 = \l_2$.}
 \end{array}
 \end{equation}

 \noindent
 We then have the following decomposition of ${\cal L}$:
 \begin{equation}\notag
 {\cal L}u =
 \begin{cases}
    c_{11}\pa_1(\pa_2(u)),   \qquad &\mbox{\rm if $\l_1 \neq \l_2$;} \\
    c_{11}\pa_1^2(u),   \qquad &\mbox{\rm if $\l_1 = \l_2$.}
 \end{cases}
 \end{equation}

 We also introduce the following new coordinates:
 \begin{equation} \notag
 \begin{array}{lll}
 z_1 = \ddfrac{\l_2}{\l_2-\l_1}(x_1 + \ddfrac{1}{\l_2}x_2),\;
 &z_2 = \ddfrac{\l_1}{\l_1-\l_2}(x_1 + \ddfrac{1}{\l_1}x_2) \quad
 &\quad \mbox{if $\l_1 \neq \l_2$;}\\
 \end{array}
 \end{equation}
 or
 \begin{equation}  \notag
 \begin{array}{lll}
 z_1 = \ddfrac{1}{2}(x_1 - \ddfrac{1}{\l_1}x_2),
 &z_2 = \ddfrac{1}{2}(x_1 + \ddfrac{1}{\l_1}x_2)
 &\quad \mbox{if $\l_1 = \l_2$.}
 \end{array}
 \end{equation}

 \noindent
 that satisfy the ``orthogonality" relations:
 $$
 \begin{array}{rlrrl}
 \pa_1 z_1 &= 1 & \quad &\pa_1 z_2 &= 0  \\
 \pa_2 z_1 &= 0 & \quad &\pa_2 z_2 &= 1 .
 \end{array}
 $$

 \noindent
 Finally, we "identify" $z=x_1+ {\bf i}x_2$ in $\Cbb$ and ${\bf x}=(x_1, x_2)$
 in $\Rbb^2$ in the sense that $f({\bf x})$ or $f(z)$ will mean the same for any function $f$. Notice that for $s=1$ and $s=2$ the linear transformations $\La_s(z) = z_s$ of $\Rbb^2$ are nondegenerate. Nevertheless, we shall never use ${\bf x}$ as a complex variable, as well as the symbol ${\bf x}_s$ (not the same as $x_s$) will not be used.

 \bigskip

 The following well known results take place
 \cite[Chapter IV, \S 6, (4.77)]{Bits}
 (see also \cite{PF99} for a simple direct proof).

 \bigskip
 \begin{lemma}\label{solrep}
 Let ${\cal L}$ be as above. If $\l_1 \neq \l_2$ then
 there exist in $\Cbb\setminus\{0\}$ a fixed analytic
 branch $\log(z_1 z_2^\nu)$ of the multivalued
 function $\Log(z_1 z_2^\nu)$ and a complex constant
 $k_1=k_1({\cal L}) \neq 0$ such that
 $$
 \Phi (z) =  \Phi_{\cal L}(z) = k_1\log(z_1 z_2^\nu)
 $$
 is a fundamental solution of ${\cal L}$, where $\nu = 1$
 if $\operatorname{sgn}(\Im \l_1) \neq \operatorname{sgn}(\Im \l_2)$, and $\nu = -1$ otherwise.

 If $\l_1 = \l_2$, then $\Phi_{\cal L}(z) = k_1\ddfrac{z_1}{z_2}$
 is a fundamental solution of ${\cal L}$,
 where $k_1=k_1({\cal L}) \neq 0$.

 \end{lemma}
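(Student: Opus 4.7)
The plan is to verify directly, distributionally on $\Rbb^2$, that ${\cal L}\Phi = \delta_{\bf 0}$, which pins down $k_1$ up to the proper nonzero normalization. Two ingredients do all the work: the factorization ${\cal L} = c_{11}\pa_1\pa_2$ (or $c_{11}\pa_1^2$ in the confluent case), which follows from the orthogonality $\pa_s z_{s'} = \delta_{s s'}$ together with Vieta's formulas $\l_1+\l_2 = -2c_{12}/c_{11}$, $\l_1\l_2 = c_{22}/c_{11}$; and the classical identity $\pa/\pa\ov{w}\,(1/w) = \pi\,\delta_0$ in one complex variable. The bridge between them is the observation that, after the nondegenerate $\Rbb$-linear identification $\La_s\colon{\bf x}\mapsto z_s\in\Cbb$, the operator $\pa_{s'}$ (with $s'\ne s$) acts on functions of $z_s,\ov{z_s}$ alone as $c_s\cdot\pa/\pa\ov{z_s}$, where $c_s := \pa_{s'}\ov{z_s}$: the holomorphic-in-$z_s$ part vanishes by orthogonality, and a short computation gives $c_s$ as a nonzero multiple of $\Im\l_s$, hence nonzero by ellipticity. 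The zero set of each $z_s$ in $\Rbb^2$ is just $\{{\bf 0}\}$ (for the same reason $\Im\l_{s'}\ne 0$), so $\Phi$ is smooth on $\Rbb^2\setminus\{{\bf 0}\}$.

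Case $\l_1\ne\l_2$. First I would verify that $\log(z_1 z_2^\nu)$ admits a single-valued branch on $\Rbb^2\setminus\{{\bf 0}\}$. As ${\bf x}$ traces a small loop around the origin, $\arg z_s$ changes by $2\pi\operatorname{sgn}(\Im\l_s)$, so the winding of $z_1 z_2^\nu$ is $2\pi(\operatorname{sgn}(\Im\l_1)+\nu\operatorname{sgn}(\Im\l_2))$, and this vanishes precisely for the value of $\nu$ prescribed in the lemma. With the branch fixed, $\Phi = k_1\log(z_1 z_2^\nu)$ is locally integrable (since $|z_s|\sim|{\bf x}|$), computing classically on $\Rbb^2\setminus\{{\bf 0}\}$ gives $\pa_2\Phi = k_1\nu/z_2$, and this extends distributionally to all of $\Rbb^2$ since both sides lie in $L^1_{\loc}$ and agree off the origin. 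Then $\pa_1\pa_2\Phi = k_1\nu\,\pa_1(1/z_2)$, which by the bridge above equals a nonzero constant multiple of $\delta_{\bf 0}$; one chooses $k_1$ to normalize ${\cal L}\Phi = \delta_{\bf 0}$.

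The case $\l_1=\l_2$ is essentially identical but shorter: $\pa_1\Phi = k_1\pa_1(z_1/z_2) = k_1/z_2$ and $\pa_1^2\Phi = k_1\pa_1(1/z_2)$, and one again normalizes. The main technical obstacle is the distributional computation of $\pa_1(1/z_2)$ (and its analogue $\pa_2(1/z_1)$) on $\Rbb^2$: one must carefully track the Jacobian of $\La_s^{-1}$ and the multiplicative constant $c_s$, and verify that their product is nonzero so that a unique normalizing $k_1$ exists. The other delicate point is the single-valuedness of the logarithm in the non-confluent case, which is where the sign hypothesis distinguishing $\nu=\pm 1$ enters in an essential way.
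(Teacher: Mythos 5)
The paper does not prove Lemma~\ref{solrep} at all: it simply cites \cite[Ch.~IV, \S6, (4.77)]{Bits} and refers to \cite{PF99} for ``a simple direct proof'', so there is no in-text argument to compare against. Your plan---verify ${\cal L}\Phi=\d_{\bf 0}$ distributionally by combining the factorization ${\cal L}=c_{11}\pa_1\pa_2$ (or $c_{11}\pa_1^2$), the orthogonality $\pa_s z_{s'}=\d_{ss'}$, and the one-variable identity $\pa(1/w)/\pa\ov w=\pi\d_0$ via the linear change of variables ${\bf x}\mapsto z_s$---is exactly the sort of direct verification those references have in mind, and it is sound in outline. Two intermediate claims, however, have the indices crossed, though the conclusion survives because the error is symmetric: $c_s:=\pa_{s'}\ov{z_s}$ works out to $(\ov{\l_{s'}}-\l_{s'})/(\ov{\l_{s'}}-\ov{\l_s})$, a nonzero multiple of $\Im\l_{s'}$ rather than $\Im\l_s$; and for a small positively oriented loop around the origin, $\arg z_s$ changes by $-2\pi\operatorname{sgn}(\Im\l_{s'})$, not $2\pi\operatorname{sgn}(\Im\l_s)$ (the winding equals the sign of the real Jacobian of $\La_s$, which is $\Im(\ov a\,b)$ for $z_s=ax_1+bx_2$). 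Since ellipticity forces $\Im\l_1\neq0$ and $\Im\l_2\neq0$, nonvanishing is unaffected, and the total winding of $z_1z_2^\nu$ still cancels for precisely the $\nu$ stated. One more point is stated a bit loosely: that $\pa_2\Phi=k_1\nu/z_2$ ``extends distributionally since both sides lie in $L^1_{\loc}$ and agree off the origin'' is not by itself a valid inference (compare $\operatorname{sgn}(x)$ in one dimension); you need the boundary-term estimate $\int_{|{\bf x}|=\varepsilon}|\Phi|\,d\ell\lesssim\varepsilon\log(1/\varepsilon)\to0$ (resp.\ boundedness of $z_1/z_2$ near the origin in the confluent case) to rule out a singular contribution at the first differentiation. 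With those repairs the argument goes through and does pin down a unique nonzero $k_1({\cal L})$ from the Jacobian of $\La_{s}$ and the constant $c_s$.
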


\begin{lemma}\label{solpot}
There is $k_2=k_2({\cal L}) > 1$ with the following properties. Let $T$ be a distribution with compact support in the disc $B(a,r)$
and $g=\Phi_{\cal L}*T$.

If $\l_1\ne\l_2$ then for $|z-a|>k_2 r$ we have the expansion
\begin{equation}\label{eq3.4}
g(z)=c_0\Phi(z-a) + \sum_{m=1}^{\infty}\frac{c_m^1}{(z-a)_1^m} +
\sum_{m=1}^{\infty}\frac{c_m^2}{(z-a)_2^m},
\end{equation}
where $c_0 = c_0(g)= \langle T, 1 \rangle$ and
$$
c_m^s=c_m^s(g, a)= -k_1\dfrac{\nu^{s-1}}{m} \langle T , (w-a)_s^m \rangle\;,\; s \in \{1, 2\}\,,\; m=1,2, \dots .
$$

If $\l_1 = \l_2$ then for $|z-a|>k_2 r$ we have the expansion
\begin{equation}\label{eq3.4.1}
g(z)=c_0\Phi(z-a) + \sum_{m=1}^{\infty}\frac{c_m^1}{(z-a)_2^m} +
\sum_{m=1}^{\infty}\frac{c_m^2 (z-a)_1}{(z-a)_2^{m+1}},
\end{equation}
where $c_0 = c_0(g)= \langle T, 1 \rangle$ and
$$
c_m^1 = -k_1 \langle T(w) , (w-a)_1(w-a)_2^{m-1} \rangle\;,
$$
$$
c_m^2 = k_1 \langle T(w) , (w-a)_2^m \rangle\;,\; m=1,2, \dots\,.
$$
The series in \eqref{eq3.4} and \eqref{eq3.4.1} converge in  $C^{\infty}(\Cbb\setminus B(a,k_2 r))$.

\end{lemma}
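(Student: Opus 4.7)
The plan is to write $g(z) = \langle T(w), \Phi_{\cal L}(z-w)\rangle$ and expand $\Phi_{\cal L}(z-w)$ as a series in powers of the coordinates $(w-a)_1$, $(w-a)_2$, absolutely and uniformly convergent on $\Spt(T)\times\{|z-a|>k_2 r\}$. Since each linear map $\Lambda_s:\Rbb^2\to\Cbb$, $v\mapsto v_s$, is nondegenerate, there exist positive constants $c_s\le C_s$ (depending only on $\cal L$) with $c_s|v|\le|v_s|\le C_s|v|$. Thus for $w\in B(a,r)$ one has $|(w-a)_s|\le C_s r$, and for $|z-a|>k_2 r$ one has $|(z-a)_s|\ge c_s k_2 r$; choosing $k_2:=2\max_s(C_s/c_s)$ guarantees the key bound $|(w-a)_s/(z-a)_s|\le 1/2$ uniformly in $w\in\Spt(T)$.

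In the case $\lambda_1\ne\lambda_2$, I would decompose $\Phi_{\cal L}(z-w)=k_1\log((z-w)_1)+k_1\nu\log((z-w)_2)$ via the branch fixed in Lemma~\ref{solrep}, use $(z-w)_s=(z-a)_s-(w-a)_s$, and expand each factor with the standard series $\log(1-u)=-\sum_{m\ge1} u^m/m$ to obtain
$$\Phi_{\cal L}(z-w)=\Phi_{\cal L}(z-a)-k_1\sum_{s=1}^{2}\nu^{s-1}\sum_{m=1}^\infty\frac{(w-a)_s^m}{m\,(z-a)_s^m}.$$
In the case $\lambda_1=\lambda_2$, starting from $\Phi_{\cal L}(z-w)=k_1\bigl[(z-a)_1-(w-a)_1\bigr]\cdot\sum_{m\ge0}(w-a)_2^m/(z-a)_2^{m+1}$ (geometric expansion of $1/(z-w)_2$) and splitting off the $m=0$ summand in the $(z-a)_1$-part to recover $\Phi_{\cal L}(z-a)$, a short re-indexing produces the expansion (\ref{eq3.4.1}). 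In both cases, pairing with $T$ term by term yields exactly the stated formulas for $c_0$ and the $c_m^s$.

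It remains to justify convergence in $C^\infty(\Cbb\setminus B(a,k_2 r))$; this upgrades the formal term-by-term action of $T$ to a rigorous one, since $T$, being a compactly supported distribution, acts continuously on $C^\infty$ functions of finite order on any fixed compact set. For any multi-index $\alpha$, differentiating a generic term $1/(z-a)_s^m$ in $z$ produces a factor bounded by $C_\alpha\, m^{|\alpha|}|(z-a)_s|^{-m-|\alpha|}$, which is dominated by $|(z-a)_s|^{-|\alpha|}\cdot m^{|\alpha|}(2C_s r/|(z-a)_s|)^m\lesssim m^{|\alpha|}\,2^{-m}$ when $w\in B(a,r)$; polynomial growth in $m$ is absorbed by the geometric factor, so the series and each derivative series converge uniformly on compact subsets of $\Cbb\setminus B(a,k_2 r)$.

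The main obstacle is the careful bookkeeping in the confluent case $\lambda_1=\lambda_2$: here $\Phi_{\cal L}$ is a ratio rather than a sum of logarithms, so the two coordinates $(\cdot)_1$, $(\cdot)_2$ do not decouple, and one must correctly match the powers of $(z-a)_2$ with the two types of numerator terms $(z-a)_1$ and $(w-a)_1$ to reproduce the precise form of (\ref{eq3.4.1}) with the two distinct sign conventions in $c_m^1$ and $c_m^2$. Everything else reduces to the standard Laurent-type expansion of the fundamental solution together with the uniform estimate above.
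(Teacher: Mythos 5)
Your proof is correct and is exactly the kind of ``simple direct proof'' the paper attributes to \cite{PF99} (the paper itself gives no proof, citing Bitsadze and \cite{PF99}): expand $\Phi_{\cal L}(z-w)$ in a Laurent-type series in the linear coordinates $(w-a)_s$ and pair termwise with $T$. The coefficient computations check out in both the generic and the confluent cases, including the re-indexing that produces the two families $c_m^1, c_m^2$ when $\l_1=\l_2$, and the choice $k_2=2\max_s(C_s/c_s)$ correctly ensures $|(w-a)_s/(z-a)_s|\le 1/2$. Two details are worth tightening. First, when $\l_1\ne\l_2$ there is in general no single-valued branch of $\log z_1$ or $\log z_2$ separately on $\Cbb\setminus\{0\}$ (e.g.\ $z_1=z/2$ for the Laplacian), so the literal identity $\Phi_{\cal L}(\zeta)=k_1\log\zeta_1+k_1\nu\log\zeta_2$ should be avoided; the clean version is that for fixed $z$ with $|z-a|>k_2r$ the difference $\Phi(z-w)-\Phi(z-a)$ equals $k_1\log\frac{(z-w)_1}{(z-a)_1}+k_1\nu\log\frac{(z-w)_2}{(z-a)_2}$ with principal branches, both ratios lying in $B(1,\tfrac12)$, and this agrees with the fixed branch of Lemma~\ref{solrep} because both sides are continuous in $w\in B(a,r)$ and vanish at $w=a$. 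Second, since $T$ is a distribution of possibly positive order, the termwise pairing requires the expansion to converge in $C^\infty$ of the variable $w$ on a neighbourhood of $\Spt T$, not merely uniformly; but the same estimate works, since $w$-derivatives of $(w-a)_s^m$ only contribute polynomial-in-$m$ factors, absorbed by the geometric decay $2^{-m}$, so both the $w$- and $z$-smooth convergence hold simultaneously on $\ov{B(a,r)}\times K$ for any compact $K\subset\Cbb\setminus B(a,k_2r)$.
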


\begin{example}
         For the Laplacian ${\cal L} = \D$, one has $\l_1={\bf i}$, $\l_2=-{\bf i}$,
         $z_1=z/2$, $z_2=\bar{z}/2$ and
         $$
          \pa_1=\ddfrac{\pa}{\pa x_1} - {\bf i}\ddfrac{\pa}{\pa x_2}
          =: 2\ddfrac{\pa}{\pa z}, \quad
          \pa_2=\ddfrac{\pa}{\pa x_1} + {\bf i}\ddfrac{\pa}{\pa x_2}
          =: 2\ddfrac{\pa}{\pa \bar{z}}, \quad
          \Phi_{\D}(z) = \ddfrac{1}{4\pi}\log(\ddfrac{z \bar{z}}{4}).
         $$

         For the Bitsadze operator
         ${\cal L} = \ddfrac{\pa^2}{\pa {\bar{z}}^2 } =
         \ddfrac{1}{4}\left(\ddfrac{\pa^2}{\pa x_1^2} +
            2{\bf i}\ddfrac{\pa^2}{\pa x_1 \pa x_2} -
            \ddfrac{\pa^2}{\pa x_2^2}\right)$,
         one gets $\l_1=\l_2=-{\bf i}$,  $z_1=\bar{z}/2$, $z_2=z/2$ and
         $$
          \pa_1= 2\ddfrac{\pa}{\pa \bar{z}}, \quad
          \pa_2= 2\ddfrac{\pa}{\pa z}, \quad
          \Phi_{\cal L}(z) = \ddfrac{1}{\pi}\ddfrac{\bar{z}}{z}.
         $$
\end{example}

For a class $\cal I$ of functions and $\tau\geq 0$ we denote by
$\tau \cal I$ the class $\{\tau g:\, g\in \cal I\}$. Rewrite the definition of $\al_{1 \cal L}(E)$ for a nonempty bounded set $E$:
$$
\al_1(E) = \al_{1 \cal L}(E) =\sup \{| \langle {\cal L}g, 1\rangle| \,:\, g \subset {\cal I}_1(E)\} \,,
$$
where
$$
{\cal I}_1(E)=\{ \Phi_{\cal L} * T\,|\, \Spt (T) \subset E\,, \Phi_{\cal L} * T \in C^1(\Rbb^2)\,,\,||\nabla \Phi * T||\leq 1\}\,.
$$

Clearly, $\al_1(B(a, r)) \leq A r$ for each disc $B(a, r)$, and this is the only property of $\al_1$ that we
need in this and the next section.

For $g \in C^1(\Rbb^2)$ define $\nabla^c g =(\pa_1 g, \pa_2 g)$.
Then $|\nabla^c g|$ is comparable to $|\nabla g|$ and $\om (\nabla^c g, r)$ is comparable to $\om (\nabla g, r)$.

The following lemma (where we use the notations of Lemma \ref{solpot} above) is analogous to Lemma 3.3 and Corollary 3.4 in \cite{P}.
\begin{lemma}\label{estadm}
Let $E\subset B(a,r)$ and $g\in {\cal I}_1(E)$. Then there are  $k_3=k_3({\cal L})>1, k_4=k_4({\cal L})>1$ and $A=A({\cal L})>0$ such that
\begin{equation}\label{eq3.4.4}
|c_0(g)|\leq \al_1(E) \;,\;  |c_m^s(g,a)|\leq A(k_3r)^m\al_1(E)\;, s \in \{1, 2\}\,,\; m=1,2, \dots .
\end{equation}
and for $|z-a|>k_4r$ one has
\begin{equation}\label{eq3.4.51}
|\nabla^c g(z)| \leq\frac{A\al_1(E)}{|z-a|}\;;
\end{equation}
\begin{equation}\label{eq3.4.52}
\left|\nabla^c (g(z)- c_0\Phi_{\cal L}(z)) \right| \leq \frac{A r\al_1(E)}{|z-a|^2}\;;
\end{equation}
$$
\left|\nabla^c \left(g(z)-c_0\Phi_{\cal L}(z) -  \frac{c_1^1}{(z-a)_1} -
\frac{c_1^2}{(z-a)_2}\right)\right|\leq \frac{Ar^3}{|z-a|^3}\;,\; \mbox{if}\; \l_1\ne\l_2\;;
$$
\begin{equation}\label{eq3.4.53}
\left|\nabla^c \left(g(z)-c_0\Phi_{\cal L}(z) -  \frac{c_1^1}{(z-a)_2} -
\frac{c_1^2 (z-a)_1}{(z-a)_2^2}\right)\right|\leq \frac{Ar^3}{|z-a|^3}\;,\; \mbox{if} \; \l_1=\l_2\,.
\end{equation}
\end{lemma}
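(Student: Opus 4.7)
The plan is to establish the two coefficient bounds in \eqref{eq3.4.4} first and to deduce the three gradient estimates by termwise differentiation of the expansions in Lemma \ref{solpot}. The first bound $|c_0(g)|\le\al_1(E)$ is immediate from the definition: writing $g=\Phi_{\cal L}*T$ with $T={\cal L}g$, the function $g$ itself is an admissible competitor in \eqref{kcap1}, so $|c_0(g)|=|\langle T,1\rangle|\le\al_1(E)$.

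For $|c_m^s(g,a)|\le A(k_3r)^m\al_1(E)$ I would apply a Vitushkin-type localization in the spirit of Lemma \ref{lemC1Vit}. Pick a cutoff $\eta\in C_0^\infty(B(a,2r))$ with $\eta\equiv 1$ on $B(a,r)$ and $\|\nabla\eta\|\le A/r$, and set $\varphi(w)=\eta(w)(w-a)_s^m$ (or $\varphi(w)=\eta(w)(w-a)_1(w-a)_2^{m-1}$ in the coincident-roots case for $c_m^1$). Using the bi-Lipschitz equivalence $|(w-a)_s|\simeq|w-a|$ and the Leibniz rule, one gets $\|\nabla\varphi\|\le A k_3^m r^{m-1}$ for a suitable $k_3=k_3({\cal L})>1$, the combinatorial factor $m$ being absorbed into $k_3^m$. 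Apply Lemma \ref{lemC1Vit} to $g$ with this $\varphi$ (and radius $2r$); since $\om(\nabla g,2r)\le 2\|\nabla g\|\le 2$, the localized function $h={\cal V}_\varphi(g)=\Phi_{\cal L}*(\varphi T)$ belongs to $C^1(\Rbb^2)$ with $\|\nabla h\|\le A(k_3r)^m$, and part (b) of the same lemma gives $\Spt({\cal L}h)\subset\Spt(\varphi)\cap\Spt(T)\subset E$. Consequently $h/[A(k_3r)^m]\in{\cal I}_1(E)$, so by the definition of $\al_1$ one has $|\langle T,(w-a)_s^m\rangle|=|\langle{\cal L}h,1\rangle|\le A(k_3r)^m\al_1(E)$, and the coefficient formulas in Lemma \ref{solpot} (with their harmless $1/m$ in the $\l_1\ne\l_2$ case) yield \eqref{eq3.4.4}.

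For \eqref{eq3.4.51}--\eqref{eq3.4.53} I would differentiate the expansions \eqref{eq3.4} and \eqref{eq3.4.1} term by term, which is legitimate since Lemma \ref{solpot} gives convergence in $C^\infty(\Cbb\setminus B(a,k_2r))$. Using $|(z-a)_s|\simeq|z-a|$ and the orthogonality $\pa_s(z-a)_{s'}=\delta_{ss'}$, each summand satisfies $|\nabla^c(\cdot)|\le Am/|z-a|^{m+1}$, with the analogous estimate for the mixed terms $(z-a)_1/(z-a)_2^{m+1}$ in \eqref{eq3.4.1}. Choosing $k_4=2k_3$ (or $\max(k_2,2k_3)$), so that $k_3r/|z-a|\le 1/2$ whenever $|z-a|>k_4r$, makes every arising geometric series converge. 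Summing all terms (each bounded via \eqref{eq3.4.4}) produces \eqref{eq3.4.51}; subtracting $c_0\Phi_{\cal L}$ shifts the leading order to $c_1^s/(z-a)_s^2$ and contributes the extra factor $k_3r$, giving \eqref{eq3.4.52}; subtracting also the two $m=1$ terms leaves leading order $(k_3r)^2\al_1(E)/|z-a|^3$, which is converted to $Ar^3/|z-a|^3$ by the trivial estimate $\al_1(E)\le\al_1(B(a,r))\le Ar$ noted just before the lemma.

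The main obstacle is arranging the scaling correctly in the localization step: the index $\varphi$ has size $(k_3r)^m$ and $\|\nabla\varphi\|$ scales like $k_3^m r^{m-1}$, yet Lemma \ref{lemC1Vit}(a) contains exactly one compensating $r$-factor, so the net bound on $\|\nabla h\|$ is the clean $(k_3r)^m$ needed to feed into the definition of $\al_1$. Everything beyond this is bookkeeping with geometric series. (A minor technicality is that Lemma \ref{lemC1Vit} is stated for $f\in C_0^1(\Rbb^2)$, while $g$ need not be compactly supported; this is handled by multiplying $g$ by a large cutoff, since ${\cal V}_\varphi(g)$ depends only on $\varphi{\cal L}g=\varphi T$, which has compact support in $E$.)
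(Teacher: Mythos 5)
Your proof is correct and follows essentially the same route as the paper's: the trivial bound for $c_0$, a Vitushkin-type localization with index $\chi(w)(w-a)_s^m$ on $B(a,2r)$ fed into Lemma~\ref{lemC1Vit} and the definition of $\al_1$ for the higher coefficients, and then termwise differentiation of the Laurent-type expansions of Lemma~\ref{solpot} for the tail estimates (with the final $Ar^3/|z-a|^3$ bound obtained by absorbing $\al_1(E)\le Ar$). Your side remark about needing a large cutoff to reduce to $C_0^1$ is a legitimate point that the paper glosses over, and the only nit is that $k_4$ must be taken large relative to $d_1=\min\{|z_s|:|z|=1,\,s\in\{1,2\}\}$ as well as to $k_3$, as in the paper's choice $k_4=(k_3+1)/d_1$.
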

\begin{proof}
We give a short proof here for completeness, and only for the cases $\l_1\ne\l_2$. The cases $\l_1 =\l_2$
can be done almost the same way. To check \eqref{eq3.4.4} fix $m\geq 1$ and $s$ and take $g_m^s=\Phi*(T(w)(w-a)_s^m)$,
where $T={\cal L}g$. Then $|c_m^s|=|k_1|m^{-1}|c_0(g_m^s)|$. Take $\chi\in C^1(\ov{B({\bf a},2r)})$, $\chi=0$
outside of $B({\bf a},2r)$, with $\chi=1$ in $B({\bf a},r)$ and $||\nabla \chi||<2/r$. Then $g_m^s= {\cal V}_{\chi(w)(w-a)_s^m}g$.
Since, clearly, $||\nabla (\chi(w)(w-a)_s^m)||\leq A(k_3r)^{m-1}$, it remains apply Lemma \ref{lemC1Vit}
and definition of $\al_1(E)$. The remaining estimates in the last lemma can be now easily checked.
In fact, let $d_1=\min\{|z_s|\;:\, |z|=1\,,\, s \in \{1, 2\}\}$. One then can take $k_4=(k_3+1)/d_1$.
\end{proof}

{\section{Proof of Theorem \ref{main-th}.}}

We now formulate and prove some generalization of Theorem \ref{main-th}.

Fix any {\it even} function $\f_1$ in $C(\Rbb^2)\cap C^1(\ov{B({\bf 0},1)})$ with
$\Spt\f_1$ in $\ov{B({\bf 0},1)}$ and with the property $\int\f_1({\bf x})d{\bf x} =1$.

Set $\f_r^{\bf a}({\bf x}) =\f_1(({\bf x}-{\bf a})/r)/r^2\,$ and $\varphi_r =\varphi_r^{\bf 0}$. Clearly,
$||\nabla \f_r^{\bf a}||=r^{-3}||\nabla \f_1||$.

By analogy with \cite[Theorem 2.2]{P95}, the proof of Theorem \ref{main-th} is based on the
following result.

\begin{theor}\label{main-th2}
For a compact set $X$ and $f\in C^1_0(\Rbb^2)$ the following are equivalent:

(a) $\, f \in {\cal A}^1_{\cal L}(X)\,$;

(b) there exist $k \ge 1$ and a function $\om (r)\to 0$ as $r\to 0+$ such that for each disc $B=B({\bf a}, r)$ one has
\begin{equation}\label{eq3.5}
 \left|\int_{B({\bf a}, r)} \pa_1 f ({\bf x}) \pa_2\f_r^{\bf a} ({\bf x}) d {\bf x} \right| \le
\om (r) r^{-2} \,\al_1 \left(B({\bf a}, kr)\setminus X \right)\;;
\end{equation}

(c) the property (b) holds for $k=1$ and $\om (r)=A\om (\nabla f, r)$.
\end{theor}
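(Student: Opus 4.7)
The implication (c) $\Rightarrow$ (b) is immediate: taking $k=1$ and $\om(r)=A\om(\nabla f,r)$, the function $\om(r)\to 0$ as $r\to 0+$ because $\nabla f$ is uniformly continuous on $\Rbb^2$.

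For (a) $\Rightarrow$ (c), the plan is to apply the Vitushkin-type localization to an approximating sequence. Let $\{f_n\}\subset BC^1$ converge to $f$ in $BC^1$, with each $f_n$ being ${\cal L}$-analytic on an open neighborhood $U_n\supset X$. In the case $\l_1\ne\l_2$, integration by parts (on $\pa_2$) identifies
\[
-c_{11}\int \pa_1 f_n\,\pa_2\f_r^{\bf a}\,d{\bf x} = \langle \f_r^{\bf a}\,{\cal L}f_n,\,1\rangle,
\]
and since $\Spt({\cal L}f_n)\cap U_n=\emptyset$, the distribution $\f_r^{\bf a}\,{\cal L}f_n$ is supported in $B({\bf a},r)\setminus X$. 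By Lemma~\ref{lemC1Vit}(a), $g_n := \Phi * (\f_r^{\bf a}\,{\cal L}f_n)\in C^1(\Rbb^2)$ with $\|\nabla g_n\|\le A\om(\nabla f_n,r)\|\nabla\f_r^{\bf a}\|\,r\le A_1\om(\nabla f_n,r)\,r^{-2}$. Rescaling $g_n$ by this upper bound makes it admissible for the capacity $\al_1(B({\bf a},r)\setminus X)$, and hence $|\langle{\cal L}g_n,1\rangle|\le A_1\om(\nabla f_n,r)\,r^{-2}\,\al_1(B({\bf a},r)\setminus X)$. Passing to the limit $n\to\infty$ gives (c). The case $\l_1=\l_2$ is handled by the analogous identity associated with the decomposition ${\cal L}=c_{11}\pa_1^2$.

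The bulk of the work is (b) $\Rightarrow$ (a), carried out by a standard Vitushkin scheme. For small $r>0$, cover $\Spt f$ by a regular grid of discs $B_j=B({\bf a}_j,r)$ centered at a lattice of spacing $\sim r$, and take a $C^\infty$ partition of unity $\{\chi_j\}$ subordinate to $\{2B_j\}$ with bounded overlap and $\|\nabla\chi_j\|\le A/r$. Decompose $f=\sum_j f_j$ with $f_j:={\cal V}_{\chi_j}(f)$; by Lemma~\ref{lemC1Vit}, each $f_j\in C^1(\Rbb^2)$ satisfies $\|\nabla f_j\|\le A\om(\nabla f,r)$ and $\Spt({\cal L}f_j)\subset 2B_j$. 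Indices $j$ with $2B_j\cap X=\emptyset$ contribute pieces already ${\cal L}$-analytic on a neighborhood of $X$ and need no correction. For the remaining indices, expressing $\chi_j$ as a constant multiple of $r^2\f_r^{{\bf a}_j}$ for a suitable admissible $\f_1$ and applying hypothesis (b) yields the key smallness estimate
\[
|c_0(f_j)|=|\langle\chi_j,{\cal L}f\rangle|\le C\,\om(r)\,\al_1(kB_j\setminus X),
\]
so the definition of $\al_1$ supplies a distribution $T_j$ with $\Spt T_j\subset kB_j\setminus X$, $h_j:=\Phi_{\cal L}*T_j\in C^1(\Rbb^2)$, $\langle T_j,1\rangle=c_0(f_j)$ and $\|\nabla h_j\|\le 2C\om(r)$. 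The candidate approximant is $F:=f-\sum_j h_j$.

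The core technical step is to show $\|F-f\|_1\to 0$ as $r\to 0+$ while ensuring $F$ is ${\cal L}$-analytic on a genuine neighborhood of $X$. Since $c_0(h_j)=c_0(f_j)$, estimate \eqref{eq3.4.52} of Lemma~\ref{estadm} gives $|\nabla(f_j-h_j)(z)|\lesssim\om(r)\,r\,\al_1(kB_j\setminus X)/|z-{\bf a}_j|^2$ for $|z-{\bf a}_j|\gg r$, but summed over the $O(r^{-2})$ indices clustered near $X$ this need not be $o(1)$. The remedy, in the spirit of \cite{P95}, is a more refined choice of $T_j$ matching also $c_1^1(f_j)$ and $c_1^2(f_j)$, which invokes the stronger decay \eqref{eq3.4.53} of order $r^3/|z-{\bf a}_j|^3$ and produces a genuinely summable bound. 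For the ${\cal L}$-analyticity, note that (b) applied to discs $B\subset X^\circ$ (where $\al_1(kB\setminus X)=0$) forces ${\cal L}f\equiv 0$ on $X^\circ$, so ${\cal L}F={\cal L}f-\sum_j T_j$ is supported in $\Rbb^2\setminus X^\circ$; a concluding smoothing/perturbation step that pushes the singularities of $F$ strictly into $\Rbb^2\setminus X$ produces the required sequence $\{f_n\}\subset{\cal A}^1_{\cal L}(X)$. The control of the summation together with this perturbation is where the principal technical difficulty lies.
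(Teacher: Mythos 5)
Your handling of (c)$\Rightarrow$(b) and (a)$\Rightarrow$(c) matches the paper's argument and is correct (modulo the minor technical point that the paper uses $\f^{r-\e}_{\bf a}$ rather than $\f^r_{\bf a}$ to ensure the localization is supported strictly inside $B({\bf a},r)\setminus X$, then lets $\e\to 0$). The gaps are in (b)$\Rightarrow$(a), where you have correctly identified the outline but left the two key mechanisms unresolved.

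First, the estimate $|c_0(f_j)|\le C\om(r)\,\al_1(kB_j\setminus X)$ does not follow from (b) by ``expressing $\chi_j$ as a constant multiple of $r^2\f_r^{{\bf a}_j}$.'' The hypothesis (b) is formulated for one fixed even bump $\f_1$, and the cutoffs in a partition of unity are \emph{not} translates/rescalings of a single profile. The paper makes this step work by a convolution trick: it sets $\psi_j=\f_\delta*\f_\delta*\varphi_j=\f_\delta*\f_j^*$ with $\f_j^*\ge 0$, and then by Fubini writes $c_0(f_j)=\mbox{const}\cdot\int\f_j^*({\bf y})\bigl(\int\pa_1 f\,\pa_2\f_\delta^{\bf y}\bigr)d{\bf y}$, so that (b) can be applied pointwise in ${\bf y}$ and then averaged. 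Without this device (or an equivalent one) the passage from (b) to the needed coefficient bound is not justified.

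Second, and more fundamentally, matching $c_1^1(f_j)$ and $c_1^2(f_j)$ individually by ``a more refined choice of $T_j$'' supported in $kB_j\setminus X$ is in general impossible: the admissibility condition $\|\nabla\Phi*T_j\|\le 1$ and the bound on $\al_1(kB_j\setminus X)$ only control the total mass $\langle T_j,1\rangle$, not the first moments. The paper resolves this by grouping indices into ``complete groups'' $\Gamma=L_1\cup L_2$ (Definition~\ref{def1}), constructing long-range linear combinations $h^{13}$ of potentials placed at far-separated discs of the same chain (see \eqref{eq3.5.11}--\eqref{eq3.5.16}), and showing that the resulting vectors $(c_1^1,c_1^2)$ from the $1$- and $2$-chains are linearly independent in $\Cbb^2$ for $q$ large, so any target pair $(c_1^1(g_\Gamma),c_1^2(g_\Gamma))$ can be hit (Lemma~\ref{main}). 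The final summation over groups then splits into two classes depending on the diameter-versus-distance ratio, yielding the $O(|l^n|^{-9/4})$ and $O(|m|^{-5/4})$ tails. You point to \eqref{eq3.4.53} and to \cite{P95}, and your statement that this is ``where the principal technical difficulty lies'' is accurate, but naming the difficulty is not the same as solving it: without the group construction, the sum $\sum_j\nabla(f_j-h_j)$ need not be uniformly small, and the proposal does not close.

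You also have not argued that the resulting $F$ (or, after perturbation, $f_n$) actually converges to $f$ in the $BC^1$ norm; the paper proves uniform smallness of $\nabla(F-f)$, and the $C^0$ smallness then follows since all the functions involved vanish at infinity and are obtained by integrating the gradient bound. This is a minor omission compared with the two points above, but it should be addressed.
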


In particular, for $\f_1 = 8\psi_1^{\bf 0}$ (see Lemma \ref{hat-osc}) this theorem
coincides with Theorem \ref{main-th}.

{\it Proof of $(a)\Rightarrow (c)$ in Theorem \ref{main-th2}}. Let
$f \in {\cal A}^1_{\cal L}(X)$ and take a sequence $\{f_n\}^{+\infty}_{n=1}\subset BC^1$ such that
each $f_n$ is ${\cal L}$-analytic in (its own)
neighborhood $U_n$ of $X$ and $\|f-f_n\|_1 \to 0$ as $n\to+\infty$. By regularization arguments we can additionally suppose that each $f_n \in C^{\infty} (\Rbb^2)$.
Fix $B=B({\bf a}, r)$ and $\e \in (0, r/2)$. Then there is $n_{\e}\in \Nbb$ such that for all $n \geq n_{\e}$ one has $\|f-f_n\|_1 < \e$, and then also $\om ((\na f - \na f_n), r)< 2\e$.
So it is enough to prove the estimate
$$
 \left|\int_{B({\bf a}, r)} \pa_1 f_n ({\bf x}) \pa_2\f_r^{\bf a} ({\bf x}) d {\bf x} \right| \le
A \om_n (r) r^{-2} \,\al_1 \left(B({\bf a}, r)\setminus X \right)
$$
with $A=A({\cal L})$ and $\om_n (r)=\om (\na f_n, r)$, and then tend $\e$ to $0$.
Let $h_n={\cal V}_{\f}f_n$, where $\f({\bf x})= \f_{\bf a}^{r-\e}({\bf x})$.
By Lemma \ref{lemC1Vit}, $h_n \in BC^1$, $||\nabla h_n|| \leq A\om_n (r) r||\nabla \f||$ and $h_n$ is ${\cal L}$-analytic outside some
compact set $E \subset B\setminus X$. By \eqref{kcap1} and Lemma \ref{lemC1Vit} we have
$$
|\langle {\cal L} h_n, 1  \rangle|=|\langle \f, {\cal L} f_n \rangle|=|c_{11}||\langle \pa_2 \f , \pa_1 f_n  \rangle| \le
A\om_n (r) r||\nabla \f|| \al_1 \left(B({\bf a}, r)\setminus X \right)\,,
$$
which ends the proof of $(a)\Rightarrow (c)$.

Since $(c)\Rightarrow (b)$ is evident, we pass to the following more complicated part of the proof.

{\it Proof of $(b)\Rightarrow (a)$ in Theorem \ref{main-th2}}.

We can suppose that for some $R>0$ we have
$X\subset B(0,R)$ and $f(z)=0$ for $|z|>R$. In \eqref{eq3.5} we also take
$\omega(\delta)\geq \omega(\nabla f,\delta)$.

Fix $\delta>0$ and any standard $\delta$-partition of unity
$\{(\varphi_j,B_j)\,:\; j=(j_1,j_2)\in\Zbb^2\}$ in $\Cbb$. This means that
$B_j = B(a_j,\delta)$, where $a_j=j_1\delta + {\bf i} j_2\delta\in\Cbb$,
$\varphi_j\in C_0^\infty(B_j)$, $0\leq \varphi_j\leq
1$, $||\nabla\varphi_j||\leq A/\delta$, $\sum_j \varphi_j\equiv 1$.

Now consider the new partition of unity $\{(\psi_j, B'_j)\}$, where
$\psi_j=\varphi_\delta*\varphi_\delta*\varphi_j$,
$B'_j=B(a_j,3\delta)$ (recall that
$\varphi_\delta=\varphi_\delta^{\bf 0}$). Clearly, $\psi_j\in
C_0^\infty(B'_j)$ and $||\nabla\psi_j||\leq A/\delta$.
Define the so called {\it localized} functions $f_j = \Phi_{\cal L}*(\psi_j {\cal L}f)$.

\begin{lemma}\label{LocEst}
The functions $f_j$ satisfy the following properties:
\begin{itemize}
\item[(1)] $f_j\in A\omega(\nabla f,\delta){\cal I}_1(B'_j\setminus X^0)$;
\item[(2)] $f=\sum_j f_j$ and the sum is finite ($f_j=0$ if $B'_j\cap
B(0,R) =\emptyset$);
\item[(3)] if $\l_1\ne\l_2$ then for $|z-a_j|>3k_2 \d$ we have the expansion
$$
f_j(z)=c_{0j}\Phi(z-a_j) + \sum_{m=1}^{\infty}\frac{c_{mj}^1}{(z-a_j)_1^m} +
\sum_{m=1}^{\infty}\frac{c_{mj}^2}{(z-a_j)_2^m}\,,
$$
or for $\l_1=\l_2$:
$$
f_j(z)=c_{0j}\Phi(z-a_j) + \sum_{m=1}^{\infty}\frac{c_{mj}^1}{(z-a_j)_2^m} +
\sum_{m=1}^{\infty}\frac{c_{mj}^2 (z-a_j)_1}{(z-a_j)_2^{m+1}}\,,
$$
where
\begin{equation}\label{eq3.5.2}
c_{0j} = \int f({\bf x}) {\cal L}\psi_j ({\bf x}) d{\bf x}=-c_{11}\int \pa_1 f({\bf x}) \pa_2 \psi_j ({\bf x}) d{\bf x}\,,
\end{equation}
\begin{equation}\label{eq3.5.3}
c_{1j}^s=c_{11}k_1\nu^{s-1} \int \pa_s f({\bf x}) \pa_{(3-s)}(\psi_j ({\bf x}) (z-a_j)_s)d{\bf x}\;,\; s \in \{1, 2\}
\end{equation}
with $\nu=-1$ whenever $\l_1=\l_2$.
Define $G_j=B(a_j, (k+2)\delta)\setminus X$. Then
\begin{equation}\label{eq3.5.4}
|c_{0j}| \leq A\omega(\nabla f,\delta) \al_1(G_j)\,,
\end{equation}
\begin{equation}\label{eq3.5.5}
|c_{1j}^s| \leq A\omega(\nabla f,\delta) \delta \al_1(G_j)\,\,, s \in \{1, 2\}\,.
\end{equation}

\end{itemize}
\end{lemma}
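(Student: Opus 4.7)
The lemma has three independent assertions. For (1), I apply Lemma~\ref{lemC1Vit} directly with $\varphi=\psi_j$, a $C^\infty$ function supported in $B'_j$ satisfying $||\nabla\psi_j||\leq A/\delta$. Part~(a) yields $f_j\in C^1(\Rbb^2)$ with $||\nabla f_j||\leq A\omega(\nabla f,\delta)$, and part~(b) gives $\Spt({\cal L}f_j)\subset\Spt(\psi_j)\cap\Spt({\cal L}f)$. To see that $\Spt({\cal L}f)\subset\Rbb^2\setminus X^0$, note that for any ball $B({\bf a},r)\subset X^0$ one has $\al_1(B({\bf a},kr)\setminus X)=0$, so by~\eqref{eq3.5} the integral $\int\partial_1 f\,\partial_2\varphi_r^{\bf a}\,d{\bf x}$ vanishes, which after a standard limit argument as $r\to 0$ forces ${\cal L}f\equiv 0$ on $X^0$ distributionally. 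For (2), since $\sum_j\varphi_j\equiv 1$ and $\int\varphi_\delta=1$, one has $\sum_j\psi_j\equiv 1$, whence $\sum_j f_j=\Phi*{\cal L}f=f$; the sum is finite because $\Spt(f)\subset B(0,R)$.

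For the expansion in (3), I apply Lemma~\ref{solpot} to $T_j:=\psi_j{\cal L}f$, whose support lies in $B(a_j,3\delta)$; this immediately gives the asserted series for $|z-a_j|>3k_2\delta$. The coefficient formulas~\eqref{eq3.5.2} and~\eqref{eq3.5.3} then come from transferring derivatives onto $f$ via integration by parts, using the factorization of ${\cal L}$ recalled in Section~2 together with the ``orthogonality'' relations $\partial_s z_s=1$, $\partial_{3-s}z_s=0$ which simplify ${\cal L}(\psi_j({\bf x}-a_j)_s)$; the two cases $\l_1\neq\l_2$ and $\l_1=\l_2$ are handled analogously.

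The estimates~\eqref{eq3.5.4} and~\eqref{eq3.5.5} are the analytic core, and the plan is to exploit the double convolution $\psi_j=\varphi_\delta*\eta_j$ with $\eta_j:=\varphi_\delta*\varphi_j$ supported in $B(a_j,2\delta)$ and $||\eta_j||_\infty\leq A$. By Fubini,
\[
\int\partial_1 f\,\partial_2\psi_j\,d{\bf x}=\int\eta_j({\bf y})\int\partial_1 f({\bf x})\,\partial_2\varphi_\delta^{\bf y}({\bf x})\,d{\bf x}\,d{\bf y},
\]
and the inner integral is precisely the quantity controlled by condition~(b) at scale~$\delta$ centered at~${\bf y}$; since $B({\bf y},k\delta)\subset B(a_j,(k+2)\delta)$ for ${\bf y}\in\Spt(\eta_j)$, it is bounded by $\omega(\delta)\delta^{-2}\al_1(G_j)$. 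Integrating in~${\bf y}$ contributes an area factor $\delta^2$ and yields~\eqref{eq3.5.4}. For~\eqref{eq3.5.5} the same scheme applies after splitting $({\bf x}-a_j)_s=({\bf x}-{\bf y})_s+({\bf y}-a_j)_s$: the second summand contributes an extra factor $\leq A\delta$ and is treated identically, giving the desired $A\omega(\delta)\delta\al_1(G_j)$ bound. The main obstacle is the first summand, which via $\partial_{3-s}({\bf x}-{\bf y})_s=0$ reduces to an integral against the tilted bump $\chi_{\bf y}({\bf x})=({\bf x}-{\bf y})_s\varphi_\delta^{\bf y}({\bf x})$; I would attack this by applying Lemma~\ref{lemC1Vit} to $\chi_{\bf y}$ (noting $||\nabla\chi_{\bf y}||\leq A\delta^{-2}$) together with integration by parts to recognize the integral as $-c_{11}^{-1}\langle{\cal L}{\cal V}_{\chi_{\bf y}}(f),1\rangle$, and then carefully verifying that the resulting capacity estimate is controlled by $\al_1(G_j)$ rather than by a strictly larger set.
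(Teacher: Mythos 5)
Your handling of (1), (2), the expansion in (3), and the estimate \eqref{eq3.5.4} is correct and matches the paper's proof (your $\eta_j$ is the paper's $\varphi_j^*=\varphi_\delta*\varphi_j$). The genuine gap is in \eqref{eq3.5.5}, in the treatment of the first summand after you split $({\bf x}-a_j)_s=({\bf x}-{\bf y})_s+({\bf y}-a_j)_s$. You propose to estimate the resulting integral against the tilted bump $\chi_{\bf y}({\bf x})=({\bf x}-{\bf y})_s\varphi_\delta^{\bf y}({\bf x})$ by applying Lemma~\ref{lemC1Vit} and then ``carefully verifying'' that the capacity that appears is $\alpha_1(G_j)$. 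That verification cannot succeed. Lemma~\ref{lemC1Vit}(b) gives only $\Spt({\cal L}{\cal V}_{\chi_{\bf y}}(f))\subset B({\bf y},\delta)\cap\Spt({\cal L}f)\subset B({\bf y},\delta)\setminus X^0$, so the definition \eqref{kcap1} yields at best $|\langle{\cal L}{\cal V}_{\chi_{\bf y}}(f),1\rangle|\lesssim\omega(\nabla f,\delta)\,\delta^{-1}\,\alpha_1(B({\bf y},\delta)\setminus X^0)$. This is the wrong capacity: $B\setminus X^0\supset B\setminus X$, and $\alpha_1(B({\bf y},\delta)\setminus X^0)$ can be comparable to $\delta$ even when $\alpha_1(G_j)$ is arbitrarily small, so after integrating in ${\bf y}$ this route would only give $|c_{1j}^s|\lesssim\omega(\nabla f,\delta)\,\delta^2$ with no factor of $\alpha_1(G_j)$, which is useless for the approximation scheme. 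Lemma~\ref{lemC1Vit} is a pure localization estimate, blind to hypothesis~\eqref{eq3.5}, and \eqref{eq3.5} only controls test functions that are translates-dilates of the fixed profile $\varphi_1$, which $\chi_{\bf y}$ is not; there is no way to upgrade $\alpha_1(\cdot\setminus X^0)$ to $\alpha_1(\cdot\setminus X)$ from the localization lemma alone.

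The paper's device, which is the missing idea, is different. Rather than splitting the linear factor, one shows by a Fourier-transform argument (as in \cite[p.~1331]{P95} or \cite[Lemma~3.4]{mazpar15}) that the whole product $\psi_j({\bf x})(z-a_j)_s$ can be written as a single convolution $\varphi_\delta*\chi_j$ with $\chi_j\in C_0^\infty(B(a_j,2\delta))$ and $\|\chi_j\|_\infty\leq A\delta$. This is possible precisely because $\psi_j=\varphi_\delta*\varphi_\delta*\varphi_j$ already contains two $\varphi_\delta$ factors, so dividing out one $\widehat{\varphi_\delta}$ on the Fourier side still leaves the transform of a smooth compactly supported function. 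With $\psi_j(z-a_j)_s=\varphi_\delta*\chi_j$ in hand, the Fubini computation from \eqref{eq3.5.4} applies verbatim: the inner integral is genuinely of the form $\int\partial_1 f\,\partial_2\varphi_\delta^{\bf y}\,d{\bf x}$, hypothesis~\eqref{eq3.5} applies directly, and the bound $\|\chi_j\|_\infty\lesssim\delta$ supplies exactly the extra factor of $\delta$ required in \eqref{eq3.5.5}.
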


\begin{proof}
Notice that the last two estimates are corollaries of \eqref{eq3.5}, not only \eqref{eq3.4.4}.
We follow analogous proof for Lemma 2.5 in \cite{P95}.

First we obtain \eqref{eq3.5.4} using \eqref{eq3.5.2}, which follows from Lemma \ref{solpot}, the definition of
$f_j$ and integration by parts. Set $\f_j^* =\f_\delta*\f_j$.
Then
$$
\varphi_j^* \in C_0^\infty(B(a_j,2\delta))\;,\; 0\leq\varphi_j^*\leq 1 \;,\; \psi_j =\f_\delta*\f_j^*\,.
$$

By \eqref{eq3.5} and Fubini's theorem
\begin{align*}
|c_{0j}| & = |c_{11}| \left|\int \pa_1 f({\bf x}) \pa_2 \left(\int\varphi_\delta({\bf x}-{\bf y})\varphi_j^*({\bf y}) d{\bf y} \right) d{\bf x}\right| \\
& = |c_{11}| \left| \int\varphi_j^*({\bf y})
\left(\int \pa_1f({\bf x})\pa_2(\varphi_\delta^{{\bf y}}({\bf x})) d{\bf x}
\right)d{\bf y} \right| \\
&
\leq A_1 \left| \int\varphi_j^*({\bf y})\omega(\delta)\delta^{-2}\al_1(B({\bf y}, k\delta)
\setminus X) d{\bf y} \right| \leq  A\omega(\delta)\alpha_1(G_j)\,.
\end{align*}

In order to estimate $|c_{1j}^s|$ we first need to
check that in \eqref{eq3.5.3} the function $\psi_j ({\bf x}) (z-a_j)_s$
has the form $\varphi_\delta*\chi_j$, where
$\chi_j\in C_0^\infty(B(a_j, 2\delta))$ and $||\chi_j||\leq A\delta$.
It can be done the same way as in \cite[p. 1331]{P95} or \cite[Lemma 3.4]{mazpar15} using the Fourier transform.
Then we proceed as in the first part of the proof.

\end{proof}

We are ready to describe the scheme for approximation of the
function $f=\sum f_j$ following \cite{{P}} and \cite[\S 6]{mazpar15}.

Put $J=\{j\in \Zbb^2\,:\; B'_j\cap\partial X\neq \emptyset\}$. For $j\not\in J$
by Lemma \ref{LocEst} (1), clearly, $f_j\in {\cal A}^1_{\cal L}(X)$, so these $f_j$ don't need to be
approximated. Let now $j\in J$. By definition of $\alpha_1(G_j)$ (recall that $G_j=B(a_j, (k+2)\delta)\setminus X$)
and by \eqref{eq3.5.4}
we can find functions $f_j^*\in A\omega(\delta){\cal I}_1(G_j)\subset {\cal A}^1_{\cal L}(X)$
such that $c_0(f_j^*)=c_0(f_j)$. Put $g_j=f_j-f_j^*$ ($f_j^*= f_j$,
$g_j\equiv 0$ for $j\not\in J$). Then
\begin{equation}\label{eq3.5.6}
    ||\nabla g_j|| \leq A\omega(\delta)\;;\; c_0(g_j)=0\, .
\end{equation}
Therefore, by \eqref{eq3.4.4} (with $m=1$) for $E=G_j$ and $g=f_j^*$ and by \eqref{eq3.5.5} we can write
(clearly, $c_1^s(g_j, a)=c_1^s(g_j)$ do not depend on $a$):
\begin{equation}\label{eq3.5.7}
   |c_1^s(g_j)| \leq A\omega(\delta)\delta\alpha_1(G_j)\,, s \in \{1, 2\}.
\end{equation}

Using \eqref{eq3.4.51} -- \eqref{eq3.4.53} for $g=g_j$ and $E=B(a_j, (k+2)\delta)=B^*_j$, we
obtain for $|z-a_j|>p\delta$ (here $p=\max\{k_2, k_3, k_4, k+2\}+1$) :
\begin{equation}\label{eq3.5.8}
    |\nabla g_j(z)|\leq \frac{A\omega(\delta)\delta\alpha_1(G_j)}
                      {|z-a_j|^2} + \frac{A\omega(\delta)\delta^3}
                      {|z-a_j|^3}\,.
\end{equation}

We need to introduce the following abbreviate notations. Recall that $\delta$ is fixed and small enough.

For $j \in J$ set $\al_j=\al_1(G_j)$, so that all $\al_j>0$. For $I \subset J$ and $z\in \Cbb$
put
$$
B^*_I=\bigcup_{j\in I}B^*_j\;,\; G_I=\bigcup_{j\in I}G_j\;,\; \al_I=\sum_{j\in I} \al_j\;,\; g_I=\sum_{j\in I} g_j\;,
$$
$$
I'(z)=\{j \in I\;:\; |z-a_j|>p\d\}\;,\; S'_I(z)=\sum_{j\in I'(z)} \left(\frac{\d \al_j}{|z-a_j|^2}+\frac{\delta^3}
                      {|z-a_j|^3}\right)\,.
$$
Set also $S_I(z)=S'_I(z)$ if $I=I'(z)$ and $S_I(z)=S'_I(z)+1$ if $I\neq I'(z)$.
For $I\subset J$, $l\in I$ and $s\in \{1, 2\}$ define $P_s(I,l)=\{j\in I\,:\, j_{3-s}=l_{3-s}\}$.

\begin{defin}\label{def1} Fix $s\in \{1, 2\}$, $I\subset J$ and $l\in I$. A subset $L_s=L_s(l)$ of $I$ is called a {\it complete}
$s$-{\it chain} in $I$ {\it with vertex} $l$ if the following conditions are satisfied:
\begin{itemize}
\item[(1)] $L_s$ is $s$-{\it directional and connected in} $I$; this
means that $L_s\subset P_s(I,l)$, $j_s\geq l_s$ for all $j \in L_s$,
and for each $j\in L_s$ and $j'\in P_s(I,l)$ such that $l_s \leq j'_s \leq j_s$
we have $j'\in L_s$;

\item[(2)] it is possible to represent $L_s$ as
$L_s=L_s^1\cup L_s^2 \cup L_s^3$ with the following
properties: for each $j^{\theta}\in L_s^{\theta}$, $\theta=1,2,3$, one has
$$
   j_s^1<j_s^2<j_s^3\;\quad \mbox{and}\quad|a_{j^1}-a_{j^3}|\geq q\delta \;,
$$
where $q\geq 3p$ depending only on ${\cal L}$ will be chosen later;

\item[(3)] for $\theta=1$ and $\theta=3$ we have $\alpha_{L_s^{\theta}} \geq \delta $ and $L_s$
is minimal with the properties above (then, clearly, $\alpha_{L_s} \leq A\delta$).

\end{itemize}
\end{defin}

\begin{defin}
Let $l \in I \subset J$. A set $\Ga \subset I$ is called a complete group in $I$ with vertex $l$
if there exist complete $1$- and $2$-chains $L_1$ and $L_2$  in $I$ with vertex $l$ such that
$\Ga=L_1\cup L_2$.
\end{defin}

Now we divide the set of indices $J$ into a finite number of
nonintersecting groups $\Gamma^n$, $n \in \{1, \dots, N\}$ by
induction as follows. First define a natural order in $J$: for $j \neq j'$ in $J$
write $j<j'$ if $j_2<j'_2$ or $j_2=j'_2$ but $j_1<j'_1$. Now choose
the minimal $l^1$ in $J$. If there exists a complete group $\Ga=L_1\cup L_2$ in $J$ with vertex $l^1$
 we define $\Ga^1=\Ga$. If such $\Ga$ does not exist, we put $\Ga^1=P_1(J,l)$
if $L_1$ does not exist and call $\Ga^1$ {\it incomplete} $1$-group, otherwise put $\Ga^1=P_2(J,l)$ (if $L^1$ exists, but $L_2$ does not exist in the above sense) and call $\Ga^1$ {\it incomplete} $2$-group.
If $\Ga^1, \dots, \Ga^n$ are constructed, take $J^{n +1}=J\setminus (\Ga^1\cup \dots \cup \Ga^n)$
and make the same procedure for $J^{n +1}$ instead of $J$ defining $\Ga^{n+1}$. Let $N$ be the maximal
number with the property $J^N \neq \emptyset$.
Now we fix this partition $\{\Gamma^n\}=\{\Gamma^n\}_{n=1}^N$ of $J$.

For each group $\Gamma= \Gamma^n$ (complete or not) by \eqref{eq3.5.6} -- \eqref{eq3.5.8} one has:
$$
\alpha_\Gamma\leq A\delta\,, \, c_0(g_\Gamma)=0\,,\,
|c_1^s(g_\Gamma)|\leq A\omega(\delta)\delta \,(s\in \{1, 2\})\,;
$$
\begin{equation}\label{eq3.5.10}
|\nabla g_\Gamma(z)|\leq A\omega(\delta)S_{\Gamma}(z)\,, \,
||\nabla g_{\Gamma}|| \leq A\omega(\delta)\,, ||S_{\Ga}|| \leq A\,,\, ||S_{P_s(J, l)}|| \leq A\, (s\in \{1, 2\}).
\end{equation}

\begin{lemma}\label{main}
For each complete group $\Gamma=\Gamma^n$
there exists $h_\Gamma\in A\omega(\delta){\cal I}_1(G_\Gamma)\subset
{\cal A}^1_{\cal L}(X)$ such that
$$
c_0(h_\Gamma)=0\;, \; c_1^s(h_\Gamma)=c_1^s(g_\Gamma)\; (s\in \{1, 2\})\,,
$$
and for all $z\in\Cbb$
$$
  |\nabla h_\Gamma(z)|\leq A\omega(\delta)S_\Gamma(z)\,.
$$
\end{lemma}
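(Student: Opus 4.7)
\textbf{Plan of proof for Lemma~\ref{main}.} The argument follows the Vitushkin pole-shifting scheme of \cite{P} and \cite{mazpar15}: I will construct $h_\Gamma$ as a linear combination of two elementary \emph{dipole} functions $F_1, F_2$ attached to the two complete chains $L_1, L_2$ that constitute $\Gamma$, calibrated so that the $c_0$ coefficient is automatically killed and the pair $(c_1^1(h_\Gamma), c_1^2(h_\Gamma))$ can be matched to $(c_1^1(g_\Gamma), c_1^2(g_\Gamma))$ by solving a well-conditioned $2{\times}2$ linear system whose inverse-norm depends only on ${\cal L}$.

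\textbf{Building the dipoles.} Fix $s\in\{1,2\}$. By Definition~\ref{def1}(3) the end-pieces $L_s^1, L_s^3$ each carry $\alpha_{L_s^\theta}\ge\delta$, so the definition of $\alpha_1$ together with the bounded overlap of the discs $B_j^*$ produces functions $f_s^\theta \in A\,{\cal I}_1(G_{L_s^\theta})$ with $|c_0(f_s^\theta)|$ comparable to $\delta$. Setting $\mu_s := c_0(f_s^1)/c_0(f_s^3)$ (a bounded scalar), define
\[
F_s := f_s^1 - \mu_s\, f_s^3 \;\in\; A\,{\cal I}_1(G_{L_s}),\qquad c_0(F_s)=0.
\]
To compute the first moments of $F_s$, write $(w-a_l)_t = (\bar a_s^\theta - a_l)_t + (w-\bar a_s^\theta)_t$ where $\bar a_s^\theta$ is the centroid of $G_{L_s^\theta}$, and control the residual $\langle {\cal L}f_s^\theta,(w-\bar a_s^\theta)_t\rangle$ via the centered estimate \eqref{eq3.4.4}, which contributes only $O(\delta^2)$. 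After the cancellation $\mu_s c_0(f_s^3)=c_0(f_s^1)$ this yields
\[
c_1^t(F_s) \;=\; C({\cal L})\, c_0(f_s^1)\,(\bar a_s^1-\bar a_s^3)_t \;+\; O(\delta^2),\qquad t\in\{1,2\}.
\]
Because every $a_j$ with $j\in L_s$ shares the fixed coordinate $j_{3-s}=l_{3-s}$, the displacement $\bar a_s^1-\bar a_s^3$ lies purely along the $x_s$-axis; choosing $q\ge q({\cal L})$ large in Definition~\ref{def1}(2) forces $|\bar a_s^1-\bar a_s^3|\gtrsim q\delta$ and makes the leading term dominate, giving $c_1^t(F_s)=\kappa_s\,(e_s)_t$ up to a controlled relative error, where $e_s$ is the $x_s$-unit vector, $(e_s)_t$ its $t$-th new coordinate, and $|\kappa_s|$ comparable to $\delta^2$.

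\textbf{Inverting the system.} The matrix $M := [(e_s)_t]_{s,t}$ is the coordinate-change matrix from $(x_1,x_2)$ to $(z_1,z_2)$: the explicit formulas in Section~2 give $|\det M|=1/|\lambda_2-\lambda_1|$ when $\lambda_1\ne\lambda_2$, and an analogous nonvanishing determinant when $\lambda_1=\lambda_2$, using the modified expansion \eqref{eq3.4.1} (where $c_1^1, c_1^2$ remain proportional to $\langle T,(w-a)_1\rangle, \langle T,(w-a)_2\rangle$ via the distinct basis functions $1/(z-a)_2$ and $(z-a)_1/(z-a)_2^2$). Consequently $[c_1^t(F_s)]_{s,t}$ is invertible with inverse of operator-norm $\le A\delta^{-2}$. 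Since $|c_1^t(g_\Gamma)|\le A\omega(\delta)\delta^2$ by \eqref{eq3.5.5}, there exist unique $\beta_1,\beta_2\in\Cbb$ with $|\beta_s|\le A\omega(\delta)$ such that $h_\Gamma := \beta_1 F_1 + \beta_2 F_2$ satisfies $c_1^t(h_\Gamma) = c_1^t(g_\Gamma)$ for $t\in\{1,2\}$.

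\textbf{Verification and main obstacle.} By construction $c_0(h_\Gamma)=0$, $\Spt h_\Gamma \subset G_\Gamma$, and $\|\nabla h_\Gamma\|\le|\beta_1|\,\|\nabla F_1\|+|\beta_2|\,\|\nabla F_2\|\le A\omega(\delta)$, so $h_\Gamma\in A\omega(\delta)\,{\cal I}_1(G_\Gamma)\subset {\cal A}^1_{\cal L}(X)$. For the pointwise bound $|\nabla h_\Gamma(z)|\le A\omega(\delta)\,S_\Gamma(z)$, the improved decay \eqref{eq3.4.52}--\eqref{eq3.4.53} (valid because $c_0(F_s)=0$) applied to each $F_s$ yields, for $|z-a_l|>p\delta$, a bound of order $\delta\alpha_{L_s}/|z-a_l|^2 + \delta^3/|z-a_l|^3$, which is dominated by the corresponding partial sum in $S_\Gamma(z)$ after spreading $\alpha_{L_s}=\sum_{j\in L_s}\alpha_j$ across the individual indices and using $|z-a_j|\asymp|z-a_l|$; for $z$ closer to the supports, the additive $+1$ in the definition of $S_\Gamma(z)$ absorbs $\|\nabla h_\Gamma\|\le A\omega(\delta)$. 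The decisive technical obstacle is the uniform nondegeneracy of the first-moment matrix: it requires the rigid $x_s$-alignment of the chains, a separation parameter $q$ chosen large enough to beat the $O(\delta^2)$ remainders in the first-moment computation, and the nonvanishing geometric determinant coming from the ellipticity roots; a secondary subtlety is that the case $\lambda_1=\lambda_2$ must be handled in parallel with its own basis functions via \eqref{eq3.4.1}.
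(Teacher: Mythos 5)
Your overall strategy — build one dipole per chain with vanishing $c_0$, compute its first moments, and invert a $2\times2$ system whose nondegeneracy comes from the ellipticity roots and a large separation parameter $q$ — is indeed the philosophy of the paper's proof. However, you build a \emph{block-level} dipole $F_s=f_s^1-\mu_sf_s^3$ between the two end-blocks $G_{L_s^1}$ and $G_{L_s^3}$, whereas the paper builds $h_s$ as a carefully \emph{weighted sum of many small pairwise dipoles} $h^{13}(j^1,j^3,\lambda^1,\lambda^3)=\frac{\delta}{|a_{j^1}-a_{j^3}|}\bigl(\lambda^3 h_{j^3}-\lambda^1 h_{j^1}\bigr)$ with $j^1\in L_s^1$, $j^3\in L_s^3$, $\lambda^1\alpha_{j^1}=\lambda^3\alpha_{j^3}$, and weights $\lambda(j,\kappa)$ balanced so that $\sum_{(j,\kappa)\in\Psi^\theta}\lambda(j,\kappa)\alpha_j=\delta$. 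This difference is not cosmetic: it is exactly what makes the paper's estimates close.

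The gap in your argument is the claimed estimate $\langle{\cal L}f_s^\theta,(w-\bar a_s^\theta)_t\rangle=O(\delta^2)$, and the corresponding far-field bound $|\nabla F_s(z)|\lesssim \delta\alpha_{L_s}/|z-a_l|^2+\delta^3/|z-a_l|^3$. Neither is correct at the stated scale. The end-block $L_s^\theta$ can contain arbitrarily many indices (the individual $\alpha_j$ can be much smaller than $\delta$, and Definition \ref{def1}(3) only guarantees $\alpha_{L_s^\theta}\geq\delta$ by summation), so $G_{L_s^\theta}$ has diameter $R^\theta$ that can be far larger than $\delta$. Applying \eqref{eq3.4.4} (or directly using $f_s^\theta=\sum_j c_jh_j$ and the recentring identity) gives $|c_1^t(f_s^\theta,\bar a_s^\theta)|\lesssim R^\theta\delta$, not $O(\delta^2)$; when $R^\theta\gg q\delta$ this ``residual'' is of the same order as, or larger than, your claimed main term $c_0(f_s^1)(\bar a_s^1-\bar a_s^3)_t$, so the asserted form of $c_1^t(F_s)$ fails and the invertibility of the $2\times2$ matrix is no longer controlled by $q$. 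For the same reason, \eqref{eq3.4.52} applied to $F_s$ supported in a ball of radius $\asymp R_s$ gives $|\nabla F_s(z)|\lesssim R_s\delta/|z-a_l|^2$, not $\delta^2/|z-a_l|^2$; the two errors are off by the same factor $R^\theta/\delta$, which is exactly the factor you cannot bound.

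The paper sidesteps both problems because each small dipole $h^{13}$ is localized to two \emph{individual} discs $G_{j^1},G_{j^3}$ of radius $\asymp\delta$, so the centered moment bound \eqref{eq3.4.4} and the gradient cancellation \eqref{eq3.5.13} each produce a genuine $\delta$-scale error. The $1/M$ normalization (with $M=|a_{j^1}-a_{j^3}|/\delta$ varying with the pair) converts the long lever arm into the fixed factor $\delta$, and the balancing conditions (a)–(c) on the weights $\lambda(j,\kappa)$ then make the sum of these small dipoles reproduce $(c_1^1,c_1^2)=k_1\delta^2\bigl(\cdots+O(1/M)\bigr)$ while simultaneously satisfying the pointwise bound $|\nabla h_s(z)|\leq A\,S_{L_s}(z)$, since $S_{L_s}(z)$ is itself a \emph{sum} over individual indices. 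If you want to salvage the block-dipole approach you would have to track the chain diameter $R_s$ through every estimate and show that the $\beta_s$ shrink like $\omega(\delta)\delta/R_s$; even then the first-moment residual of size $R^\theta\delta$ has no sign and cannot be made to cancel, so the matrix $[c_1^t(F_s)]$ need not be invertible with the correct norm. The pairwise-dipole decomposition is therefore not an optional refinement but the mechanism that makes the lemma provable.
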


\begin{proof}
We follow the idea in \cite[Lemma 2.7]{P95}.
Let $\Gamma$ be a complete group in $J$ with vertex $l$ and complete
$1$- and $2$-chains $L_1$ and $L_2$ respectively, and let
$L_1=L_1^1\cup L_1^2 \cup L_1^3$ (like in definitions just above).

For each $j \in \Gamma$ we can choose $h_j\in 2 {\cal I}_1(G_j)$ with $c_0(h_j)=\al_j = \al_1(G_j)$.
Let $T_j={\cal L}h_j$, so that $\al_j=\langle T_j \,,\, 1\rangle$.
Fix $j^1 \in L^1_1$, $j^3 \in L^3_1$ and for $\theta =1$ and $\theta =3$ put:
$$
 h^\theta=h_{j^\theta} \,, \; T^\theta ={\cal L} h_{j^\theta} \,, \;
a^{\theta}=a_{j^\theta} \,, \; G^\theta=G_{j^\theta} \,.
$$
Put $M=|a^1-a^3|/\delta$. Let $\la^1 \in (0,1)$ and $\la^3 \in (0,1)$ be such that $\la^1\al^1=\la^3 \al^3:=\al$. Define
\begin{equation}\label{eq3.5.11}
    h^{13}(z)=h^{13}(j^1,j^3,\la^1,\la^3, z)= (\la^3h^3(z)-\la^1 h^1(z))/M\,.
\end{equation}
Then, clearly, $c_0(h^{13})=0$ and by Lemma \ref{solpot}
\begin{align*}
\nu^s k_1^{-1} M c_1^s(h^{13})& =\langle \la^3 T^3 - \la^1 T^1 \,,\, z_s \rangle \\
&=
\la^3\langle T^3 \,,\, (z-a^3)_s \rangle + \la^3\langle T^3 \,,\,a^3_s \rangle -
 \la^1\langle T^1\,,\, (z-a^1)_s \rangle -
\la^1\langle T^1 \,,\, a^1_s \rangle\\ &= \al (a^3 -a^1)_s + R^{13}_s\,,
\end{align*}
where $|R^{13}_s| \leq A\d \al$, which follows from Lemma \ref{solpot} and \eqref{eq3.4.4} with $m=1$.
Therefore,
\begin{align}\label{eq3.5.12}
(c_1^1(h^{13})\,,\; c_1^2(h^{13}))&=k_1 M^{-1}\al(\nu (a^3 -a^1)_1, (a^3 -a^1)_2) + \d \al O(1/M)\nonumber\\
& =k_1\d \al\left((\nu (1+{\bf i}0)_1, (1+{\bf i}0)_2) + O(1/M)\right)\,.
\end{align}
Moreover, for $z$ with $|z-a^1|>p\d$ and $|z-a^3|>p\d$ we have by \eqref{eq3.4.52}:
\begin{align}\label{eq3.5.13}
\nabla h^{13}(z) &=\al M^{-1}(\nabla \Phi(z-a^3)-\nabla \Phi(z-a^1)) + \al \d (O(|z-a^1|^{-2}) + O(|z-a^3|^{-2}))\nonumber \\
&=\al \d (O(|z-a^1|^{-2}) + O(|z-a^3|^{-2}))\,.
\end{align}
The last equality has to be checked: it is enough, instead of estimating $|\nabla \Phi(z-a^1)-\nabla \Phi(z-a^3)|$,
to estimate $|\pa_s \Phi(z-a^1)-\pa_s \Phi(z-a^3)|$, $s \in \{1, 2\}$ (see Lemma \ref{solrep}),
using, finally, the following elementary trick:
$$
\left|\frac{1}{(z-a^1)_s} -\frac{1}{(z-a^3)_s}\right|=\left|\frac{(a^3-a^1)_s}{(z-a^1)_s(z-a^3)_s}\right|\leq
A|a^3-a^1|(|z-a^1|^{-2} + |z-a^3|^{-2})\,.
$$
In fact the last can be done for the case $\la_1\neq \la_2$ (see Lemma \ref{solrep}). The remaining case ($\la_1 = \la_2$) we propose to the reader's control. Additionally notice that for the case when $|z-a^{\theta}|\leq p\d$, $\theta =1$ or $\theta =3$, we have
by \eqref{eq3.4.51} (since $|a^3-a^1|=M\d \geq q\d \geq 3p\d$):
\begin{equation}\label{eq3.5.14}
|\nabla h^{13}(z)| \leq \frac{\la^{\theta}}{p}+A\frac{\la^{4-\theta} \al^{4-\theta}\d}{|z-a^{4-\theta}|^2}\,.
\end{equation}

Now we construct a special linear combination (just a sum) of such functions
$h^{13}(z)=h^{13}(j^1,j^3,\lambda^1,\lambda^3, z)$. It is easily seen
that for each $j\in L_1^1\cup L_1^3$ there exist $\lambda(j, \ka)$
$(\ka \in \{1,\dots, \ka_j\}\,, \ka_j \in \Nbb)$ with the following properties:
\begin{itemize}

\item[(a)] $\lambda(j, \ka)> 0$, $\sum_{\ka=1}^{\ka_j}\lambda(j,\ka) \leq
1$ for each $j$;

\item[(b)] between the sets of indices
$$
   \Psi^\theta=\{(j,\ka)\,:\; j\in L_1^\theta\,,\; 1\leq \ka\leq \ka_j\}\,,\;
   \theta=1\;\mbox{and}\; 3\,,
$$
we have one to one correspondence
$$
  \Psi^1\ni (j^1,\ka^1)\longleftrightarrow (j^3,\ka^3)\in \Psi^3\,,
$$
for which $\lambda(j^1,\ka^1)\alpha_{j^1}=\lambda(j^3,\ka^3)\alpha_{j^3}$;

\item[(c)] for $\theta=1$ and $\theta=3$
$$
    \sum_{(j,\ka)\in\Psi^\theta}\lambda(j,\ka)\alpha_j=\delta\,.
$$

\end{itemize}
We define
$$
    h_1(z)=h_1(L_1, z)=\sum_{(j^1,\ka^1)\in\Psi^1} \frac{\delta}{|a_{j^3}-a_{j^1}|}
       (\lambda(j^3,\ka^3)h_{j^3} -\lambda(j^1,\ka^1)h_{j^1})\,,
$$
where $(j^3,\ka^3)$ corresponds to $(j^1,\ka^1)$ in the above sense. Each
member of the last sum is precisely of the form \eqref{eq3.5.11} with
$\lambda^\theta=\lambda(j^\theta, \kappa^\theta)$. Clearly, $c_0(h_1)=0$ and
by \eqref{eq3.5.12} we have
\begin{equation}\label{eq3.5.15}
(c_1^1(h_1)\,,\; c_1^2(h_1))=k_1\d^2\left((\nu (1+{\bf i}0)_1, (1+{\bf i}0)_2) + O(1/M)\right)\,.
\end{equation}

Arguing the same way for the complete chain $L_2$ of $\Ga$ we construct the function
$h_2(z)=h_2(L_2, z)$ with the same properties as for $h_1$, but
\begin{equation}\label{eq3.5.16}
(c_1^1(h_2)\,,\; c_1^2(h_2))=k_1\d^2\left((\nu (0+{\bf i}1)_1, (0+{\bf i}1)_2) + O(1/M)\right)\,.
\end{equation}
Now {\it choose and fix} $q$ so large in Definition \ref{def1} that $O(1/M)=O(1/q)$ does not "spoil" (in \eqref{eq3.5.15}
and \eqref{eq3.5.16}) the linear independence of the vectors $k_1\d^2((\nu (1+{\bf i}0)_1, (1+{\bf i}0)_2))$ and
$k_1\d^2((\nu (0+{\bf i}1)_1, (0+{\bf i}1)_2))$. Notice that (in $\Cbb^2$) the vector $(\nu (1+{\bf i}0)_1, (1+{\bf i}0)_2)$
is collinear to $(\nu \la_2, -\la_1)$ (when $\la_1\neq \la_2$) and to $(-1,1)$ (if $\la_1 = \la_2$); the vector
$(\nu (0+{\bf i}1)_1, (0+{\bf i}1)_2)$ is collinear to $(\nu, -1)$ (when $\la_1\neq \la_2$) and to $(1,1)$ (if $\la_1 = \la_2$).

By \eqref{eq3.5.13}, \eqref{eq3.5.14} and the property (c) just above, we have
\begin{equation}\label{eq3.5.17}
|\nabla h_s(z)|\leq AS_{L_s}(z)\,.
\end{equation}

Taking into account the estimate $|(c_1^1(g_{\Ga})\,,\; c_1^2(g_{\Ga}))|\leq A\om(\d)\d^2$
(see \eqref{eq3.5.7} and Definition \ref{def1}), \eqref{eq3.5.17}, \eqref{eq3.5.15} and \eqref{eq3.5.16},
we clearly can find the required $h_{\Ga}$ as an appropriate linear combination of functions $h_1$ and $h_2$.

\end{proof}

It remains to show that the function $\nabla \sum_{j\in J} f_j$ is uniformly approximated on $\Cbb$
with accuracy $A\omega(\delta)$ by the function $\nabla F$, where
$$
  F= {\sum_{n}}' \left(\sum_{j\in \Gamma^n} f^*_j +h_{\Gamma^n}\right)
  + {\sum_{n}}''\sum_{j\in\Gamma^n} f^*_j\,,
$$
where $\sum'_{n}$ and $\sum''_{n}$ are summations over all
complete and incomplete groups respectively.

For the proof of this assertion it is sufficient to check that for
each $z\in \Cbb$ we have
$$
|\nabla(F(z)-f(z))| \leq {\sum_{n}}'|\nabla (g_{\Gamma^{n}} (z) - h_{\Gamma^{n}}(z))| +
{\sum_{n}}''|\nabla g_{\Gamma^{n}}(z)|\leq A\omega(\delta)\,.
$$
After that it will be
enough let $\delta$ tend to 0.

Now our situation is absolutely analogous to that of \cite[p. 200 -- 203]{P} (2-dimensional case);
some simple details (of the following last part of the proof), dropped here, can be found there.

First, estimate the sum ${\sum_{n}}''|g_{\Gamma^{n}}(z)|$. This is very easy, because in each
$P_s(J,l)=\{j\in J\,:\, j_{3-s}=l_{3-s}\}$ we can find at most one incomplete group $\Ga$ ($s$-incomplete chain $L_s=\Ga$).
Therefore, by \eqref{eq3.5.10} and (3) of Definition \ref{def1}, we can majorize the considered sum by $A\omega(\delta)\sum_{m=1}^{+\infty}m^{-2}$, and this is it.

The estimating of ${\sum_{n}}'|\nabla (g_{\Gamma^{n}} (z) - h_{\Gamma^{n}}(z))|$ is more complicated.
For each {\it complete} group $\Gamma^{n}$ set $\chi^{n}= g_{\Gamma^{n}}
-h_{\Gamma^n}$. Then we have by \eqref{eq3.5.10} and Lemma \ref{main}
\begin{equation}\label{eq3.5.18}
|\nabla \chi^n(z)|\leq A\om(\d)S_{\Ga^n}(z)\,,\, c_0(\chi^n)=c_1^1(\chi^n)=c_1^2(\chi^n)=0\,.
\end{equation}
It suffices to prove that
$$
   {\sum_{n}}'\,|\nabla \chi^n(z)|\leq A\omega(\delta)
$$
for each $z\in\Cbb$. From now on we fix $z\in\Cbb$;
without loss of generality we can suppose that $|z|<\delta$.
All further constructions will be relative also to $z$.

Let $\Gamma^{n}=L_1^n\cup L_2^n$ (with vertex $l^n$) be a complete group. Put $a^n=a_{l^n}$,
$M^n_s=\diam (B^*_{L^n_s})/\d$, $M^n=\max\{M^n_1\,,\,M^n_2 \}$. Divide the collection  of all complete groups
in two classes.

{\it Class} (1). Here we take all complete groups $\Ga^n$ with $M^n \leq |l^n|^{1/4}$.

Clearly, the latter is possible only if $|z-a^n| \geq (|l^n|-1)\d \geq ((M^n)^4-1)\d > 2pM^n\d$.
Since $\chi^n\in A\om(\d){\cal I}(B(a^n, M^n\d)$ and \eqref{eq3.5.18} holds, we have by \eqref{eq3.4.53}:
$$
|\nabla \chi^n (z)| \leq A\om(\d)\frac{(M^n \d)^3}{(|l^n| \d)^3} \leq A\om(\d) (|l^n|^{-9/4})\,.
$$
Since in each annulus $B(0, (m+1)\d)\setminus \overline{B(0, m\d)}$ ($m>p$) we can find at most $Am$ vertices
of groups, we can see that
$$
{\sum_{n}}^{(1)}\,|\nabla \chi^n(z)|\leq A\omega(\delta)\sum_{m>p}(m^{-5/4}) \leq A_1\omega(\delta)\,,
$$
where the last sum corresponds to all complete groups of the Class (1), which now is well estimated.

{\it Class} (2). Here we place all complete groups $\Ga^n$ for which $M^n > |l^n|^{1/4}$. Fix such a group
$\Ga^n$. Then, for some $s=s^n \in \{1,2\}$ we have $M^n_s > |l^n|^{1/4}$. First we consider
the case when $s^n=1$ is {\it just one} with the last property for $\Ga^n$.
Clearly, then $|l^n| > 2p\d$, $M^n_2 \leq |l^n|^{1/4}$, so that
$$
S_{L^n_2}(z) \leq \frac{A\om (\d)\d^2}{|z-a^l|^2} \,,
$$
and then
$$
|\nabla \chi^n (z)| \leq A\om(\d)\left(\frac{\d^2}{|z-a^l|^2} + S_{L^n_1} (z) \right)\,.
$$
The same way we argue when $s^n=2$ is {\it just one} with the property $M^n_s > |l^n|^{1/4}$.

In any case we have the following lemma.
\begin{lemma}
Fix an integer $m$, and let $V_{ms}$ denote the collection of all complete groups $\Ga^n$ of the Class (2) with
$l^n_{3-s}=m$ and such that $M^n_s > |l^n|^{1/4}$. Then
$$
\sum_{n\in V_{ms}} S_{L^n_s} (z) \leq A \,,\, |m|<2p\,,
$$
and
$$
\sum_{n\in V_{ms}} S_{L^n_s} (z) \leq A m^{-5/4}\,,\, |m|\geq 2p\,.
$$
\end{lemma}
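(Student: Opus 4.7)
The plan rests on two structural facts. First, since every $\Ga^n\in V_{ms}$ has vertex $l^n$ with $l^n_{3-s}=m$, all chains $L^n_s$ lie on the single integer line $\ell_m=\{j\in\Zbb^2\,:\,j_{3-s}=m\}$; belonging to distinct groups of the partition, they are pairwise disjoint, and by condition~(1) of Definition~\ref{def1} each is a contiguous segment of $\ell_m\cap J$ starting at $l^n$. Second, the Class~(2) hypothesis combined with $|l^n|\ge|m|$ forces $M^n_s>|m|^{1/4}$, so each such chain has $j_s$-extent at least $|m|^{1/4}-O(1)$.

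For $|m|<2p$ the bound is essentially immediate: since the summands in $S_{L^n_s}(z)$ are non-negative and the chains are disjoint subsets of $P_s(J,l)$,
$$
\sum_{n\in V_{ms}}S_{L^n_s}(z)\le S_{P_s(J,l)}(z)\le A,
$$
by the last estimate in \eqref{eq3.5.10}.

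For $|m|\ge 2p$, the normalization $|z|<\delta$ forces $|z-a_j|\ge(|m|/2)\delta$ for every $j\in\ell_m\cap J$. I would split $\sum_n S_{L^n_s}(z)=T_1+T_2$ according to the two terms of $S'$. For $T_2$ one drops the restriction to $V_{ms}$-chains and estimates $\sum_{j\in\ell_m\cap J}\delta^3/|z-a_j|^3$ by a 1D integral comparison, yielding $O(m^{-2})$, which is already better than $|m|^{-5/4}$. For $T_1$ one uses $\sum_{j\in L^n_s}\alpha_j\le\alpha_{L^n_s}\le A\delta$ (property~(3) of Definition~\ref{def1}) to bound the contribution of chain $n$ by $A\delta^2/d_n^2$, where $d_n=\min_{j\in L^n_s}|z-a_j|$.

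The main obstacle is converting the spatial separation of the chains into a usable lower bound on $d_n$. Ordering the chains of $V_{ms}$ by $l^n_s$ and using disjointness together with the $j_s$-extent bound $\ge|m|^{1/4}-O(1)$, successive chain-starts differ by at least $A^{-1}|m|^{1/4}$; a symmetric argument handles chains lying on the opposite side of the foot of the perpendicular from $z$ to $\ell_m$. Letting $c_n$ denote the $j_s$-coordinate of the endpoint of $L^n_s$ closest to this foot, this gives $d_n^2\ge A^{-1}(m^2+c_n^2)\delta^2$, and a Riemann-sum comparison yields
$$
T_1\le A\sum_n\frac{1}{m^2+c_n^2}\le\frac{A}{|m|^{1/4}}\int_{-\infty}^{\infty}\frac{dx}{m^2+x^2}=\frac{A}{|m|^{5/4}},
$$
as required. (At most one chain of $V_{ms}$ straddles this foot of the perpendicular; its contribution is $\le A/m^2\le A|m|^{-5/4}$ and is absorbed.)
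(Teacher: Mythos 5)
Your proposal is correct and follows essentially the same route as the paper: the $|m|<2p$ case is handled by the $\|S_{P_s(J,l)}\|\le A$ bound, and the $|m|\ge 2p$ case uses the pairwise disjointness of the chains on the line $\ell_m$ together with the Class~(2) extent bound $M^n_s>|m|^{1/4}$ to produce a Riemann-sum comparison with $\int dx/(m^2+x^2)$, yielding the $|m|^{-5/4}$ decay. You supply more intermediate detail than the paper's two-line display (in particular, you separate the $\delta^3/|z-a_j|^3$ tail explicitly, which the paper leaves implicit), but the decomposition and the key quantitative input are identical.
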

\begin{proof}
For $|m|<2p$ this follows from the estimate $||S_{P_s(J, j)}||\leq A$ for each $j\in J$. Let now $|m|\geq 2p$.
Since all $L^n_s$, $n \in V_{ms}$, are pairwise "disjoint" and $M^n_s> |l^n|^{1/4} \geq |m|^{1/4}$, we have
$$
\sum_{n\in V_{ms}} S_{L^n_s} (z)\leq A_1\sum_{\tau\in \Zbb}\frac{\d^2}{m^2\d^2 + (|m|^{1/4}\tau)^2\d^2} \leq
A_2 |m|^{-1/2} \int_0^{+\infty}\frac{dt}{(|m|^{3/4})^2+t^2} =A|m|^{-5/4}\,.
$$
\end{proof}

Summation by $m$ and $s$ now gives the desired estimate for ${\sum_{n}}^{(2)}\,|\nabla \chi^n(z)|$, corresponding to the Class (2).
This ends the proof of Theorem \ref{main-th2}.

{\section{Proof of Theorem \ref{main-th1}.}}

Observe that, by Lemma \ref{solrep} we have $(\pa_1 \Phi(z), \pa_2 \Phi(z))=k_1(1/z_1, \nu/z_2)$ if $\la_1 \neq \la_2$ or
$(\pa_1 \Phi(z), \pa_2 \Phi(z))=k_1(1/z_2, -z_1/z_2^2)$ otherwise. Define
$$\big(K_1(z), K_2(z)\big)=\Big(\frac1{z_1}, \frac1{z_2}\Big)
\quad \mbox{
if $\la_1 \neq \la_2$,}$$
and
$$\big(K_1(z), K_2(z)\big)=\Big(\frac{z_1}{z_2^2}\,, \frac1{z_2}\Big)
\quad \mbox{
if $\la_1 = \la_2$.}$$
Then, clearly, $\al_1(E)$ is comparable to
$$
\al_{12}(E) = \al_{12 \cal L}(E) =\sup_{T} \{| \langle T, 1\rangle| \,:\, {\Spt} (T) \subset E ,\, K_s * T \in C(\Rbb^2)\,,\,||K_s * T||\leq 1\,, s\in \{1, 2\}\}\,,
$$
and $\ga_1(E)$ is comparable to
$$
\ga_{12}(E) = \ga_{12 \cal L}(E) =\sup_{T} \{| \langle T, 1\rangle| \,:\, {\Spt} (T) \subset E ,\, K_s * T \in L_{\infty}(\Cbb)\,,\,||K_s * T||\leq 1\,, s\in \{1, 2\}\}\,.
$$

\subsection{Preliminaries}

We assume all measures to be positive, Borel and locally finite.
A measure $\mu$ in $\Cbb$ is said to have linear growth  (or $A_0$-linear growth) if there exists some constant $A_0>0$ such that
$$\mu(B(z,r))\leq A_0\,r\quad\mbox{ for all $z\in\Cbb$, $r>0$.}$$
The maximal Hardy-Littlewood operator with respect to $\mu$ applied to a signed measure $\nu$ is defined by
$$M_\mu \nu(z) =\sup_{r>0} \frac{|\nu|(B(z,r))}{\mu(B(z,r))}.$$
For a function $f\in L^1_{loc}(\mu)$, we write
$$M_\mu f(z) =\sup_{r>0} \frac{1}{\mu(B(z,r))}\int_{B(z,r)} |f|\,d\mu.$$
It is well known (see Chapter 2 of \cite{Mattila}, for example) that $M_\mu$ is bounded in $L^p(\mu)$ for $1<p<\infty$ and also from the space of finite
signed measures $M(\Cbb)$ into $L^{1,\infty}(\mu)$. The latter means that there exists some constant $A$ such that
$$\mu\big(\big\{z\in\Cbb:M_\mu \nu(z)>\lambda\big\}\big)\leq A\,\frac{\|\nu\|}{\lambda}\quad \mbox{for all
$\lambda>0$ and all $\nu\in M(\Cbb)$.}$$

Given a signed measure $\nu$ and a kernel $K(\cdot)$ which is $C^1$ away from the origin and satisfies
\begin{equation}\label{eq1}
|K(z)|\leq\frac{A}{|z|},\qquad |\nabla K(z)|\leq\frac A{|z|^2 }\qquad\mbox{for $z\in\Cbb\setminus\{0\}$,}
\end{equation}
we denote
$$T_K\nu(z) = \int K(z-w)\,d\nu(w)$$
whenever the integral makes sense.
 For $\varepsilon>0$ we consider the truncated version of $T_K$:
$$T_{K,\varepsilon}\nu(z) = \int_{|z-w|>\varepsilon} K(z-w)\,d\nu(w),$$
and the maximal operator
$$T_{K,*}\nu(z) = \sup_{\varepsilon>0} |T_{K,\varepsilon}\nu(z)|.$$
For a fixed positive Borel measure $\mu$ and $f\in L^1_{loc}(\mu)$, we write
$T_{K,\mu} f= T_K(f\mu)$,
$T_{K,\mu,\varepsilon}f = T_{K,\varepsilon}(f\mu)$, and $T_{K,\mu,*}f = T_{K,*}(f\mu)$.
We say that $T_{K,\mu}$ is bounded in $L^p(\mu)$ if the operators $T_{K,\mu,\varepsilon}$ are bounded uniformly on $\varepsilon>0$ in $L^p(\mu)$, and
we set
$$\|T_{K,\mu}\|_{L^p(\mu)\to L^p(\mu)} = \sup_{\varepsilon>0}\|T_{K,\mu,\varepsilon}\|_{L^p(\mu)\to L^p(\mu)}.$$

Analogously, if
$$\mu\big(\big\{z\in\Cbb:|T_{K,\varepsilon} \nu(z)|>\lambda\big\}\big)\leq A\,\frac{\|\nu\|}{\lambda}\quad \mbox{for all
$\lambda>0$, all $\nu\in M(\Cbb)$ and all $\varepsilon>0$,}$$
we say that $T_K$ is bounded from $M(\Cbb)$ into $L^{1,\infty}(\mu)$, and we denote by $\|T_{K}\|_{M(\Cbb)\to L^{1,\infty}(\mu)}$ the optimal constant $A$.

For technical reasons we need to consider also smoothly truncated operators. We fix a radial $C^\infty$ function
$\varphi$ which vanishes in $B(0,1/2)$ and equals $1$ in $\Cbb\setminus B(0,1)$, and for $\varepsilon>0$ we set
$\varphi_\varepsilon(z) = \varphi\big(\varepsilon^{-1} z\big)$. We write
$$T_{K,(\varepsilon)} \nu(z) = \int \varphi\Big(\frac {z-w}\varepsilon\Big)\,K(z-w)\, d\nu(w)$$
and
$$T_{K,(*)}\nu(z) = \sup_{\varepsilon>0} |T_{K,(\varepsilon)}\nu(z)|$$
and also $T_{K,\mu,(\varepsilon)} f = T_{K,(\varepsilon)} (f\mu)$, $T_{K,\mu,(*)} f = T_{K,(*)} (f\mu)$.
If $\mu$ has linear growth, it is immediate to check that there exists some constant $A$ depending only
on the kernel $K$ such that
\begin{equation}\label{eqdif8}
\bigl|T_{K,\mu,(\varepsilon)} f(z) - T_{K,\mu,\varepsilon} f(z)\bigr| \leq A\,M_\mu f(z)
\quad\mbox{ for all $z\in\Cbb$ and $\varepsilon>0$.}
\end{equation}
By the $L^p(\mu)$ boundedness of $M_\mu$ for $1<p<\infty$,
this implies that $T_{K,\mu}$ is bounded in $L^p(\mu)$ if and only if the operators $T_{K,\mu,(\varepsilon)}$ are bounded in $L^p(\mu)$ uniformly on $\varepsilon>0$ (under the linear growth assumption for $\mu$).

When $K$ is the Cauchy kernel, that is, $K(z) = \dfrac1{z}$, we have that $T_K$ is the Cauchy integral operator (or Cauchy transform) and we denote $T_K={\mathcal C}$
and $T_{K,\mu}={\mathcal C}_\mu$.
Notice also that the kernels $K_1$ and $K_2$ defined above
 satisfy \eqref{eq1}.

Given three pairwise points $z,w,\xi\in\Cbb$, we denote by $R(z,w,\xi)$ the radius of the circumference passing through
$z,w,\xi$, with $R(z,w,\xi)=\infty$ if these points are aligned. Their Menger curvature is $c(z,w,\xi) =
\dfrac1{R(z,w,\xi)}.$ If two among the points coincide or the points are aligned, we write $c(z,w,\xi)=0$.
The curvature of the measure $\mu$ is defined by
$$c^2(\mu) = \iiint c(z,w,\xi)^2\,d\mu(z)\,d\mu(w)\,d\mu(\xi).$$
This notion was introduced by Mark Melnikov in \cite{Melnikov} while studying a discrete version of analytic capacity.

For a given compact set $E\subset\Cbb$, let $\Sigma(E)$ be the set of Borel measures supported on $E$ such that
$$\mu(B(z,r))\leq r\qquad\mbox{for all $z\in\Cbb$, $r>0$.}$$
Also, let $\Sigma_0(E)$ be the set of Borel measures $\mu\in\Sigma(E)$ such that
$$\lim_{r\to0}\frac{\mu(B(z,r)}r =0\qquad \mbox{for all $x\in\spt\mu$}.$$
In \cite{Tolsa-sem} it was shown
\begin{align}\label{eqgam}
\gamma(E) &\asymp \sup\bigl\{\mu(E):\mu\in\Sigma(E), \,\|{\mathcal C}_\mu\|_{L^2(\mu)\to L^2(\mu)}\leq 1\bigr\}\\
& \asymp \sup\bigl\{\mu(E):\mu\in\Sigma(E), \,c^2(\mu)\leq\mu(E)\bigr\},\nonumber
\end{align}
and in \cite{tol04},
\begin{align}\label{eqalph}
\alpha(E) &\asymp \sup\bigl\{\mu(E):\mu\in\Sigma_0(E), \,\|{\mathcal C}_\mu\|_{L^2(\mu)\to L^2(\mu)}\leq 1\bigr\}\\
& \asymp \sup\bigl\{\mu(E):\mu\in\Sigma_0(E), \,c^2(\mu)\leq\mu(E)\bigr\}.\nonumber
\end{align}

Another result that will be needed for the proofs of $\gamma_{12}\gtrsim\gamma$ and $\alpha_{12}\gtrsim
\alpha$ is the following:

\begin{theor}[\cite{Tolsa-pubmat}] \label{teopubmat}
Let $\mu$ be a locally finite Borel  measure without point masses in $\Cbb$.
If the Cauchy transform ${\mathcal C}_\mu$ is bounded in $L^2(\mu)$, then any
singular integral operator $T_{K,\mu}$ associated with an odd kernel $K\in C^\infty(\Cbb\setminus\{0\}$ satisfying
$$
|z|^{1+j}\,|\nabla^j K(z)|\in L^\infty(\Cbb)\quad\mbox{ for all $z\neq0$ and
$j=0,1,2,\ldots$}
$$
is also bounded on $L^2(\mu)$. Further, the norm of the operator $T_{K,\mu}$ in $L^2(\mu)$ is bounded by some constant
depending only on the one of ${\mathcal C}_\mu$ as an operator in $L^2(\mu)$ and on the numbers $\sup_{z\neq0}|z|^{1+j}\,|\nabla^j K(z)|$, $j=0,1,2\ldots$.
\end{theor}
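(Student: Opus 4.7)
The plan is to deduce the $L^2(\mu)$-boundedness of $T_{K,\mu}$ from that of ${\mathcal C}_\mu$ via a non-homogeneous $T(1)$-type theorem in the style of Nazarov--Treil--Volberg. First I would observe that boundedness of ${\mathcal C}_\mu$ on $L^2(\mu)$ forces $\mu$ to have linear growth: testing against characteristic functions of balls and using the lower bound for the Cauchy kernel on a suitable cone of directions yields $\mu(B(z,r))\le A\,r$ uniformly. Combining this with the Melnikov--Verdera symmetrization identity, which expresses $\|{\mathcal C}_{\mu,\varepsilon}1\|_{L^2(\mu)}^2$ up to a linear term as a positive integral whose integrand is the squared Menger curvature, gives uniform control on the local curvature $c^2(\mu|_Q)\lesssim \mu(Q)$ for every ball $Q$.

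Next, to conclude the $L^2(\mu)$-boundedness of $T_{K,\mu}$ for a general odd $C^\infty$ kernel $K$ satisfying the stated growth conditions, I would verify the hypotheses of a $T(1)$ theorem for non-doubling measures: linear growth (already established), an $\mathrm{RBMO}(\mu)$ bound for $T_{K}\mu$, and a weak boundedness property on balls. The strategy for the latter two is to invoke a corona-type decomposition of $\mu$ extracted from the Cauchy hypothesis (as developed in Tolsa's proof of the semiadditivity of analytic capacity): this represents $\mu$, up to a tame error in $L^2$, as a superposition of pieces supported near Lipschitz curves. On each such AD-regular piece, classical Calder\'on--Zygmund theory (David's theorem) yields $L^2$ boundedness for any CZ operator whose kernel satisfies the stated smoothness and size bounds, with operator norm controlled by finitely many of the quantities $\sup_{z\neq 0}|z|^{1+j}|\nabla^j K(z)|$.

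The main obstacle is the transference itself: the Cauchy kernel is special because its $L^2(\mu)$-boundedness is characterised via the positive geometric quantity of Menger curvature, whereas a generic odd CZ kernel admits no such symmetrization identity. This is precisely circumvented by the corona decomposition, which pushes the analysis to the ``geometric layer'' of Lipschitz curves, where standard CZ theory is insensitive to the precise kernel; one then reassembles the $L^2(\mu)$-bound for $T_{K,\mu}$ by controlling the corona errors using the linear growth and curvature bounds derived from the Cauchy hypothesis. The remaining verifications of the $T(1)$ conditions and the recombination of pieces are technical but follow by now-standard non-homogeneous Calder\'on--Zygmund machinery, and one checks at each step that the constants depend only on $\|{\mathcal C}_\mu\|_{L^2(\mu)\to L^2(\mu)}$ and on the prescribed kernel bounds.
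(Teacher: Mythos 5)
The paper does not prove this result; it imports it verbatim from \cite{Tolsa-pubmat}, so there is no in-paper proof against which to compare. Judged against the argument actually given in that reference, your sketch is on target in its essentials: Cauchy boundedness plus absence of atoms yields linear growth, the Melnikov symmetrization converts the $L^2$ bound into local curvature control, and the decisive step is the corona decomposition of $\mu$ (built in the semiadditivity paper \cite{Tolsa-sem} from linear growth and curvature), which reduces matters to Calder\'on--Zygmund operators on Lipschitz graphs, where David's theorem is kernel-agnostic. You correctly flag the core difficulty --- that the curvature machinery is tied to the Cauchy kernel's symmetrization identity and does not transfer to a generic odd kernel --- and correctly identify the corona decomposition as the device that sidesteps it.

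One inaccuracy worth noting: you frame the endgame as verifying the hypotheses of a non-homogeneous $T(1)$ theorem (RBMO bound on $T_K\mu$, weak boundedness), but establishing those conditions for $T_K$ from scratch would be essentially as hard as proving $L^2$ boundedness directly, so this would be circular as stated. The argument in \cite{Tolsa-pubmat} instead uses the corona decomposition to split $T_{K,\mu}$ directly: on each stopping-time tree one approximates $\mu$ by arc length on an associated Lipschitz graph and invokes David's theorem, and the cross-tree and approximation errors are summed using the packing condition of the corona together with linear growth. So the corona decomposition is not an auxiliary tool for checking $T(1)$ hypotheses; it \emph{is} the proof. With that adjustment your outline matches the route in the cited paper, and the dependence of the resulting bound only on $\|{\mathcal C}_\mu\|_{L^2(\mu)\to L^2(\mu)}$ and the numbers $\sup_{z\neq 0}|z|^{1+j}|\nabla^j K(z)|$ is tracked exactly as you indicate.
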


\vspace{2mm}
\subsection{Proof of $\gamma\gtrsim \gamma_{12}$ and $\alpha\gtrsim \alpha_{12}$}
Let $E\subset\Cbb$ be compact.
By the definition of $\gamma_{12}$, there exists a distribution $T$ supported on $E$ such that $\|K_s*T\|_{L^\infty(\Cbb)}\leq1$ for $s=1,2$ and $\gamma_{12}(E)\leq 2|\langle T,1\rangle|$, with $K_1,K_2$ as above.
In particular, we have $\|K_2*T\|_{L^\infty(\Cbb)}\leq 1$.

Consider the non-degenerate linear map in $\Cbb$ defined by $\La(z) = z_2$.
Let $\La_{\sharp} T$ be the push-forward distribution defined by
$$\langle \La_{\sharp} T,\varphi\rangle = \langle T,\varphi\circ \La\rangle\qquad \mbox{for all $\varphi\in C^\infty(\Cbb)$.}$$
Notice that $\La_{\sharp} T$ is supported on $\La(E)$, and for the kernel
$$\widetilde K_2(z) = K_2(\La^{-1}(z))\qquad \mbox{for all $z\in\Cbb\setminus\{0\}$}$$
it is easy to check that
$$((\La_{\sharp} T) * \widetilde K_2)(z) = (T* K_2)(\La^{-1}(z)).$$
Observe that $\widetilde K_2(z)$ coincides with the Cauchy kernel $\dfrac1z$ and thus, by the definition of analytic capacity,
\begin{equation}\label{eqgam122}
\gamma(\La(E))\geq |\langle (\La_{\sharp} T),1\rangle| = |\langle T,1\rangle| \geq 2^{-1} \gamma_{12}(E),
\end{equation}
Now we could use the fact that, by \cite{Tolsa-bilip} for any bilipschitz mapping $f:\Cbb\to\Cbb$ one has $\gamma(f(F))\asymp\gamma(F)$ for any compact set $F$, with the comparability constant depending just on the bilipschitz constant, and so
$$\gamma(\La(E))\asymp\gamma(E),$$
concluding the proof of $\gamma\gtrsim\gamma_{12}$.

An alternative argument which exploits the fact that $\La$ is a linear map is the following: by \eqref{eqgam} we know that
there exists some measure $\mu\in\Sigma(\La(E))$ such that $\mu(\La(E))\asymp\gamma(\La(E))$ and  $c^2(\mu)\leq \mu(\La(E))$.
From the fact that
$$R(z,w,\xi)\asymp R(\La(z), \La(w), \La(\xi))\qquad\mbox{for all $z,w,\xi\in\Cbb$},$$
with the comparability constant depending just on $\La$, we infer that the push-forward measure $\sigma=(\La^{-1})_\sharp\mu$
satisfies $c^2(\sigma)\asymp c^2(\mu)$. Further, it is also easy to check that
$$\sigma(B(z,r))\leq A_1\,r \qquad\mbox{for all $z\in\Cbb$}.$$
Thus, for a suitable constant $c_0>0$ depending just on $\La$, $c_0\sigma\in\Sigma(E)$ and $c^2(c_0\sigma)\leq c_0\sigma(E)$.
Hence applying again \eqref{eqgam}, we derive
$$\gamma(E) \gtrsim \sigma(E) = \mu(E)\gtrsim \gamma(\La(E)),$$
which together with \eqref{eqgam122} yields $\gamma(E)\gtrsim\gamma_{12}(E)$.

The proof of the fact that $\alpha\gtrsim \alpha_{12}$ is almost the same. The only required change is that above we have to require
the function $K_s*T$ to be continuous, which in turn implies that $(\La_{\sharp} T) * \widetilde K_2$ is continuous, and thus
\eqref{eqgam122} holds with $\gamma$ and $\gamma_{12}$ replaced by $\alpha$ and $\alpha_{12}$, respectively. Then one concludes
in the same way applying either the fact that $\alpha(\La(E))$ is comparable to $\alpha(E)$ because $\La$ is bilipschitz (by applying \cite{Tolsa-bilip} to $\alpha$), or by following the last alternative argument, using \eqref{eqalph} instead of \eqref{eqgam}.
\fiproof

\vspace{2mm}

\subsection{Proof of $\gamma\lesssim \gamma_{12}$ }

We need some auxiliary lemmas. The first one is based on some work which goes back to Davie and Oksendal, and its proof can be found with minor modifications in \cite{Mattila-Paramonov} (see Lemma 4.2 there and the definitions of the standard notations
${\mathcal{M}}(X)$, $C_0(X)$, $T_j^*$ just before it).

\begin{lemma}\label{christ}
Let $\mu$ be a Radon measure on a locally compact Hausdorff space X and let
$T_j: {\mathcal{M}}(X)\rightarrow C_0(X),\
j=1,...,d$ be linear bounded operators. Suppose that each transpose $T^*_j:{\mathcal{M}}(X)\rightarrow C_0(X)$ is bounded from ${\mathcal{M}}(X)$ to $L^{1,\infty}(\mu)$, that is to say that there exists a constant A such that
\begin{equation} \mu \{x: |T_j^* \nu (x)|>\lambda \} \leq C \frac{\| \nu
\|}\lambda \end{equation}
for $j=1,...,d$, $\lambda>0$ and $\nu \in
{\mathcal{M}}(X)$. Then, for each $\tau >0$ and any Borel set
$E\subset X$ with $0<\mu (E)<\infty$, there exists $h: X \to [0, 1]$ in $L^{\infty}(\mu)$
satisfying $h(x)=0$ for $x\in X\backslash E$,
$$ \int_E h\,d\mu \geq \frac1{1+\tau}\,\mu(E)$$ and
$$ \|T_j(h\mu)\|_\infty \leq A(C,\tau,d),\qquad \mbox{ for $j=1,...,d$.}$$\\
\end{lemma}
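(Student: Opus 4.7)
The plan is to argue by minimax duality, following the classical Davie-Oksendal/Christ scheme: recast the existence of $h$ as a saddle-point problem, invoke Sion's minimax theorem to interchange sup and inf, and then close the resulting dual estimate using the weak-$(1,1)$ hypothesis on the $T_j^*$. The ``minor modifications'' alluded to in \cite{Mattila-Paramonov} are precisely those needed to pass from a single operator to a finite family $T_1,\ldots,T_d$.

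Concretely, for a constant $M=M(C,\tau,d)$ to be chosen, I would introduce signed Borel measures $\nu_1,\ldots,\nu_d$ as Lagrange multipliers for the constraints $\|T_j(h\mu)\|_\infty\le M$, and set
$$\Phi(h,\nu)=\int h\,d\mu+M\sum_{j=1}^d \|\nu_j\|+\sum_{j=1}^d\int T_j(h\mu)\,d\nu_j.$$
For $h$ with $0\le h\le 1_E$, $\inf_\nu\Phi(h,\nu)$ equals $\int h\,d\mu$ when $\|T_j(h\mu)\|_\infty\le M$ for every $j$ (attained at $\nu=0$) and equals $-\infty$ otherwise, so the lemma reduces to proving
$$\sup_{0\le h\le 1_E}\inf_\nu \Phi(h,\nu)\ge \frac{\mu(E)}{1+\tau}.$$

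Next, since $\{h\in L^\infty(\mu):0\le h\le 1_E\}$ is weak-$*$ compact and convex, $\Phi$ is linear in $h$ and convex in $\nu$, and $M\sum\|\nu_j\|$ is coercive in $\nu$, Sion's minimax theorem (after restricting to $\sum\|\nu_j\|\le R$ for $R$ sufficiently large) allows the exchange $\sup_h\inf_\nu=\inf_\nu\sup_h$. Using $\int T_j(h\mu)\,d\nu_j=\int h\,T_j^*\nu_j\,d\mu$, the inner supremum is computed pointwise (choose $h=1_E$ on $\{1+\sum T_j^*\nu_j\ge0\}$, $h=0$ elsewhere), yielding
$$\inf_\nu\Bigl[M\sum_j \|\nu_j\|+\int_E\Bigl(1+\sum_j T_j^*\nu_j\Bigr)_{\!+}d\mu\Bigr].$$
Setting $\varepsilon=1/(1+\tau)$ and $F_\nu=\{x\in E:\sum_j |T_j^*\nu_j(x)|\le 1-\varepsilon\}$, the integrand is $\ge\varepsilon$ on $F_\nu$, while weak-$(1,1)$ plus subadditivity give $\mu(E\setminus F_\nu)\le\sum_j\mu\{|T_j^*\nu_j|>(1-\varepsilon)/d\}\le\frac{dC}{1-\varepsilon}\sum_j\|\nu_j\|$. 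Therefore the bracket is at least $\varepsilon\mu(E)+\bigl(M-\varepsilon dC/(1-\varepsilon)\bigr)\sum_j\|\nu_j\|$, and the choice $M=dC/\tau$ forces this to be $\ge\varepsilon\mu(E)=\mu(E)/(1+\tau)$. Weak-$*$ compactness of the constrained set then delivers an actual maximizer $h$.

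The main obstacle I anticipate is the rigorous application of Sion's theorem, namely verifying weak-$*$ upper semicontinuity and concavity of $\Phi$ in $h$, lower semicontinuity and convexity in $\nu$, and handling the non-compactness of the $\nu$-cone via coercivity-based truncation. Once these technicalities are settled---and the reference to \cite{Mattila-Paramonov} indicates they are routine in the scalar case and persist with straightforward adjustments in the vector-valued setting---the weak-type computation above carries through directly and yields $A(C,\tau,d)\asymp dC/\tau$.
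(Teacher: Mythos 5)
Your proposal reproduces the standard Davie--Oksendal minimax argument, which is precisely what the cited Lemma~4.2 of Mattila--Paramonov (and the earlier single-operator versions due to Uy, Davie--Oksendal, and Christ) uses; the paper itself gives no proof and simply refers to that source. The Lagrangian, the exchange of $\sup$ and $\inf$, the pointwise computation of the inner supremum, and the weak-$(1,1)$ estimate on $E\setminus F_\nu$ are all the right steps, and the final bookkeeping ($\varepsilon/(1-\varepsilon)=1/\tau$, giving $M=dC/\tau$) is correct, so your approach is essentially the same as the paper's intended one.

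Two technical points you flag but should not dismiss as entirely routine: (i) the kernels $K_1,K_2$ are complex, so the scalar pairing $\int T_j(h\mu)\,d\nu_j$ and the positive part $(1+\sum_j T_j^*\nu_j)_+$ only make sense after splitting into real and imaginary parts (effectively doubling $d$, or taking $\Re$ of the pairing and allowing complex $\nu_j$); (ii) for the duality $\int T_j(h\mu)\,d\nu_j=\int h\,T_j^*\nu_j\,d\mu$ and the weak-$*$ upper semicontinuity of $h\mapsto\Phi(h,\nu)$, one should note that $T_j^*\nu_j\in L^{1,\infty}(\mu)$ need not lie in $L^1(\mu)$, but restricted to the finite-measure set $E$ it does belong to $L^1(\mu|_E)$ by the layer-cake inequality $\int_E |g|\,d\mu\le \mu(E)\lambda_0+\int_{\lambda_0}^\infty \mu\{|g|>\lambda\}\,d\lambda<\infty$, which gives the needed integrability and continuity. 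With those clarifications your proof is complete and matches the cited one.
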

From this lemma we get the following.
\begin{lemma}\label{lemoks}
Let $\mu$ be a measure in $\Cbb$ with compact support which has $A_0$-linear growth.
For $j=1,\ldots,d$, let $K^j$ be a kernel satisfying the conditions in \eqref{eq1}, and denote
$T^j=T_{K^j,\mu}$, $T^j_{(\varepsilon)}=T_{K^j,\mu,(\varepsilon)}$.
Suppose that $T^j$ is bounded in $L^2(\mu)$, with norm at most $C$. Then for each $\tau >0$
there exists some function $h:\spt\mu\to[0,1]$ such that
$$
\int h\,d\mu \geq \frac1{1+\tau}\,\|\mu\|,
$$
$$ \|T^jh\|_{L^\infty(\Cbb)} \leq A(A_0,C,\tau,d)\qquad \mbox{ for $i=1,...,d$},$$
and
$$ \|T^j_{(\varepsilon)}h\|_{L^\infty(\Cbb)} \leq A(A_0,C,\tau,d)\qquad \mbox{ for $i=1,...,d$ and all $\varepsilon>0$.}$$
\end{lemma}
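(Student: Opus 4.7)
The plan is to apply Lemma \ref{christ} to a dense countable family of smoothly truncated operators $T^j_{(\varepsilon_k)}$ and then pass to a weak-$*$ limit in $L^\infty(\mu)$ to obtain a single function $h$ whose bounds are uniform in $\varepsilon$. The two main ingredients to supply first are the uniform $L^2(\mu)$-boundedness of the smooth truncations $T^j_{(\varepsilon)}$, and uniform weak $(1,1)$ estimates (with respect to $\mu$) for both $T^j_{(\varepsilon)}$ and its transpose.

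By hypothesis $\|T^j_{\varepsilon}\|_{L^2(\mu)\to L^2(\mu)}\leq C$ uniformly in $\varepsilon$, so \eqref{eqdif8} combined with the $L^2(\mu)$-boundedness of $M_\mu$ (whose constant depends only on $A_0$) gives $\|T^j_{(\varepsilon)}\|_{L^2(\mu)\to L^2(\mu)}\leq C'$ for some $C'=C'(A_0,C)$, uniformly in $\varepsilon$. For a kernel satisfying \eqref{eq1} and a measure $\mu$ of linear growth, the non-homogeneous Calderon--Zygmund theory of Nazarov--Treil--Volberg then supplies the uniform weak $(1,1)$ estimate
$$
\mu\bigl\{z\in\Cbb\,:\,|T^j_{(\varepsilon)}\nu(z)|>\lambda\bigr\}\,\leq\,A\,\|\nu\|/\lambda
\qquad\mbox{for all } \nu\in M(\Cbb),\ \lambda,\varepsilon>0,
$$
with $A=A(A_0,C)$. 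The transpose of $T^j_{(\varepsilon)}$ with respect to the $L^2(\mu)$-pairing is the smoothly truncated convolution with the reflected kernel $\widetilde K^j(z):=K^j(-z)$, which still satisfies \eqref{eq1} and whose smooth truncations are bounded on $L^2(\mu)$ with norm at most $C'$ by duality; hence the transposes satisfy the same uniform weak $(1,1)$ bound.

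Fix now a sequence $\{\varepsilon_k\}_{k\geq 1}$ dense in $(0,\infty)$. For each $n\geq 1$ the operators $T^j_{(\varepsilon_k)}$ with $1\leq j\leq d$, $1\leq k\leq n$, map $\mathcal{M}(\spt\mu)$ continuously into $C_0(\Cbb)$ (continuity in the output variable is immediate from smoothness of $\varphi_{\varepsilon_k}K^j$, vanishing at infinity from compact support of $\mu$), and their transposes satisfy the weak $(1,1)$ hypothesis of Lemma \ref{christ}. Applying that lemma with $E=\spt\mu$ yields $h_n:\spt\mu\to[0,1]$ with $\int h_n\,d\mu\geq(1+\tau)^{-1}\|\mu\|$ and $\|T^j_{(\varepsilon_k)}h_n\|_\infty\leq A(A_0,C,\tau,d)$ for $1\leq j\leq d$, $1\leq k\leq n$. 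Extracting a weak-$*$ subsequential limit in $L^\infty(\mu)$ produces $h$ with $0\leq h\leq 1$ and $\int h\,d\mu\geq(1+\tau)^{-1}\|\mu\|$; since for each fixed $k$ and $z$ the function $w\mapsto\varphi_{\varepsilon_k}(z-w)K^j(z-w)$ lies in $L^1(\mu)$, weak-$*$ convergence gives $T^j_{(\varepsilon_k)}h(z)=\lim_n T^j_{(\varepsilon_k)}h_n(z)$ pointwise, whence $\|T^j_{(\varepsilon_k)}h\|_\infty\leq A$. Density of $\{\varepsilon_k\}$ together with continuity of $\varepsilon\mapsto T^j_{(\varepsilon)}h(z)$ for each fixed $z$ (by dominated convergence) extends the bound to all $\varepsilon>0$. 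Finally, \eqref{eqdif8} with $M_\mu h\leq\|h\|_\infty\leq 1$ gives $\|T^j_\varepsilon h\|_\infty\leq A$ uniformly in $\varepsilon$, so $T^j h$ is well defined (as a weak-$*$ limit in $L^\infty(\Cbb)$) and satisfies $\|T^j h\|_\infty\leq A(A_0,C,\tau,d)$. The main obstacle is the non-homogeneous weak $(1,1)$ bound for the truncated singular integrals, whose justification requires the full Nazarov--Treil--Volberg machinery rather than classical Calderon--Zygmund theory on doubling measures.
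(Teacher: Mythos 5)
Your overall strategy coincides with the paper's sketch: pass from $L^2(\mu)$ boundedness to weak $(1,1)$ bounds against $\mu$ via non-homogeneous Calder\'on--Zygmund theory, feed the truncated operators into the Davie--Oksendal Lemma~\ref{christ}, and extract a single $h$ by weak-$*$ compactness in $L^\infty(\mu)$. The one structural difference is in how the compactness is organized: the paper applies Lemma~\ref{christ} at a single truncation level $\varepsilon$ at a time, producing one $h_\varepsilon$ per $\varepsilon$, and then takes a weak-$*$ limit along $\varepsilon_n\to0$; you instead feed into Lemma~\ref{christ} the whole growing family $\{T^j_{(\varepsilon_k)}:1\le j\le d,\ 1\le k\le n\}$ for dense $\varepsilon_k$, get $h_n$ controlling all $nd$ of them, pass to the limit in $n$, and finally use density plus the continuity of $\varepsilon\mapsto T^j_{(\varepsilon)}h(z)$ and \eqref{eqdif8}. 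Both are legitimate shapes for this argument.

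However, there is a genuine gap in your version. Lemma~\ref{christ}, as stated, produces a constant $A(C,\tau,d')$ where $d'$ is the number of operators put into it; you apply it with $d'=nd$ and nonetheless assert the bound $A(A_0,C,\tau,d)$. As written, the constant could blow up as $n\to\infty$, and then the weak-$*$ limit $h$ carries no useful $L^\infty$ bound. To close this you must observe that the only place the count of operators enters the Davie--Oksendal argument is through the union-type estimate $\mu\bigl(\bigcup\{|T^*_{\cdot}\nu|>\lambda\}\bigr)\lesssim\|\nu\|/\lambda$, and that for fixed $j$ every truncation $T^{j,*}_{(\varepsilon_k)}\nu$ is dominated in absolute value by the maximal truncated transpose $T^{j,*}_{(*)}\nu$. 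Since $T^j$ (hence its transpose) is bounded in $L^2(\mu)$, the non-homogeneous Cotlar inequality makes the maximal operator $T^{j,*}_{(*)}$ itself bounded from $M(\Cbb)$ to $L^{1,\infty}(\mu)$ with constant controlled by $A_0$ and $C$. Therefore $\bigcup_{j,k}\{|T^{j,*}_{(\varepsilon_k)}\nu|>\lambda\}\subset\bigcup_{j}\{T^{j,*}_{(*)}\nu>\lambda\}$ has $\mu$-measure at most $dC'\|\nu\|/\lambda$ independently of $n$, and running the Davie--Oksendal construction with this improved union bound yields a constant depending only on $A_0,C,\tau,d$. You need to make this point explicitly; once you do, the rest of your passage to the limit (pointwise convergence for fixed $\varepsilon_k$, continuity in $\varepsilon$, and the transfer from smooth to sharp truncations via \eqref{eqdif8}) is sound.
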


\begin{proof}
The arguments are very standard and we just sketch them. Since $T^j\equiv T_{K^j,\mu}$
is bounded in $L^2(\mu)$,
then $T_{K^j}$ (and its transpose) is bounded from ${\mathcal{M}}(\Cbb)$ into $L^{1,\infty}(\mu)$ (see for example \cite[Chapter 2]{Tolsa-book}).
Then we apply Lemma \eqref{christ} to each smoothly truncated operator $T^j_{(\varepsilon)}$ and we deduce the existence of a function $h_{\varepsilon}:\spt\mu\to[0,1]$
such that $$ \int h_\varepsilon\,d\mu \geq \frac1{1+\tau}\,\|\mu\|$$ and
$$ \|T_j(h_{\varepsilon} \mu)\|_{L^\infty(\Cbb)} \leq A(A_0,C,\tau)\qquad \mbox{ for $j=1,...,d$.}$$
By a compactness argument in weak $L^\infty(\Cbb)$ we deduce the existence of a single
function $h$ fulfilling the properties of the lemma.
\end{proof}

We are ready to prove that $\gamma\lesssim \gamma_{12}$ now.
Let $E\subset\Cbb$ be compact. By \eqref{eqgam} there exists a measure $\mu\in\Sigma(E)$ such that $\|{\mathcal C}_\mu\|_{L^2(\mu)\to L^2(\mu)}\leq 1$ and
and $\gamma(E)\asymp\mu(E)$. By Theorem \ref{teopubmat}, $T_{K_1,\mu}$ and $T_{K_2,\mu}$ are bounded in
$L^2(\mu)$. By (non-homogeneous) Calder\'on-Zygmund theory, then the operators $T_{K_s}$ are
bounded from  $M(\Cbb)$ to $L^{1,\infty}(\mu)$ for $s=1,2$ (see \cite[Chapter 2]{Tolsa-book}, for example).
Then by Lemma \ref{lemoks} there exists some function $h:E\to[0,1]$
such that $$ \int h\,d\mu \geq \frac1{2}\,\|\mu\|$$ and
$$ \|T_{K_s,\mu}h\|_{L^\infty(\Cbb)} \leq A\qquad \mbox{ for $s=1,2$.}$$
As a consequence, from the definition of $\gamma_{12}$ we deduce that
$$\gamma_{12}(E)\gtrsim \int h\,d\mu\gtrsim \mu(E)\gtrsim\gamma(E),$$
which completes the proof of $\gamma_{12}\gtrsim \gamma$.
\fiproof

\subsection{Proof of $\alpha\lesssim \alpha_{12}$ } \label{sek}

Let $E\subset\Cbb$ be compact. By \eqref{eqalph} there exists a measure $\mu\in\Sigma_0(E)$ such that $\|{\mathcal C}_\mu\|_{L^2(\mu)\to L^2(\mu)}\leq 1$ and
and $\alpha(E)\asymp\mu(E)$.
Again by Theorem \ref{teopubmat}, we know that $T_{K_1,\mu}$ and $T_{K_2,\mu}$ are bounded in
$L^2(\mu)$. Our next objective is to find some function
$h:E\to[0,1]$, supported on $E$, such that $\int h\,d\mu\geq c\,\mu(E)$ (with $c = c({\mathcal L})>0$),
\begin{equation}\label{eqda1}
\|T_{K_s,\mu,\varepsilon}h\|_{L^\infty(\Cbb)}\leq 1 \qquad\mbox{for $s=1,2$ and all $\varepsilon>0$,}
\end{equation}
\begin{equation}\label{eqda2}
\|T_{K_s,\mu}h\|_{L^\infty(\Cbb)}\leq 1 \qquad\mbox{for $s=1,2$,}
\end{equation}
and such that moreover both $T_{K_1,\mu}h$ and $T_{K_2,\mu} h$ can be extended continuously to the whole $\Cbb$.
Note that once we prove the existence of $h$ we are done, because from the definition of $\alpha_{12}$ we deduce that
$$\alpha_{12}(E)\gtrsim \int  h\,d\mu\gtrsim \mu(E)\gtrsim\alpha(E),$$
as wished.

We will need a couple of additional auxiliary lemmas. The following result is proven (in more generality) in \cite[Lemma 3]{Ruiz-Tolsa}. The same result had been proved previously
 for the Cauchy transform in \cite{tol04}.

\begin{lemma}\label{norma}
Let $\mu$ be a measure in $\Cbb$ with compact support and linear growth and suppose that
$\lim_{r\to 0}\frac{\mu(B(x,r))}{r}=0$ for all $x \in \spt\mu$. Let $K$ be an odd kernel (i.e.,\ $K(-z)=-K(z)$ for all $z\neq0$) satisfying the conditions in \eqref{eq1}.
Suppose that $T_{K,\mu}$ is bounded in $L^2(\mu)$.
Then, given $\delta>0$ we can find $F\subset \spt\mu$ with $\mu(\mathbb C\setminus F)<\delta$ such that
\begin{itemize}
\item[(a)] $\lim_{r \to 0}{\dfrac{\mu(B(x,r)\cap F)}{r}}=0$ uniformly on $x \in \Cbb$,
\item[(b)] $\lim_{r \to 0} \|T_{K,\mu} \|_{L^2(\mu|B(x,r)\cap F) \to L^2(\mu|B(x,r)\cap
    F)}=0$, uniformly on $x \in \Cbb.$
    \end{itemize}
\end{lemma}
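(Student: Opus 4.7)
The plan is to construct $F = F_1 \cap F_2$ with $F_i \subset \spt\mu$ and $\mu(\spt\mu \setminus F_i) < \delta/2$, where $F_1$ takes care of (a) and $F_2$ takes care of (b). Since $\mu$ has compact support and linear growth, $\mu(\spt\mu)<\infty$, so Egorov's theorem is available on $\spt\mu$.

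For part (a), set $\theta_r(x) = \mu(B(x,r))/r$. By hypothesis $\sup_{s \leq r}\theta_s(x) \downarrow 0$ pointwise on $\spt\mu$ as $r \to 0$, and each $\sup_{s\leq r}\theta_s$ is Borel measurable, so Egorov's theorem yields $F_1 \subset \spt\mu$ with $\mu(\spt\mu \setminus F_1) < \delta/2$ on which the convergence is uniform. To upgrade uniformity in $y \in F_1$ to uniformity in $x \in \Cbb$, I would use the elementary observation that if $B(x,r) \cap F_1 \neq \emptyset$ and $y \in B(x,r) \cap F_1$, then $B(x,r) \subset B(y,2r)$, so
$$\frac{\mu(B(x,r) \cap F_1)}{r} \leq 2\,\theta_{2r}(y),$$
and the right-hand side tends to $0$ uniformly in $y \in F_1$.

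For part (b), the strategy combines non-homogeneous Calder\'on--Zygmund theory with a local $T(1)$-type bound. Since $T_{K,\mu}$ is bounded in $L^2(\mu)$ and $K$ satisfies \eqref{eq1}, the maximal operator $T_{K,*}$ (and its smoothly truncated variant $T_{K,(*)}$) is of weak type $(1,1)$ with respect to $\mu$, and $T_{K,(\varepsilon)}\mu$ is a Cauchy family in $\mu$-measure on $\spt\mu$ as $\varepsilon\to 0$ (using \eqref{eqdif8} to compare sharp and smooth truncations). A Chebyshev-plus-Egorov argument applied to the oscillation $\sup_{0<\varepsilon' \leq \varepsilon}|T_{K,(\varepsilon)}\mu - T_{K,(\varepsilon')}\mu|$ then produces $F_2 \subset \spt\mu$ with $\mu(\spt\mu \setminus F_2) < \delta/2$ on which this oscillation vanishes uniformly as $\varepsilon\to 0$. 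Combined with the small linear growth of $\mu|_{F_1}$ on small balls from part (a), the odd-kernel version of the non-homogeneous local $T(1)$ theorem (Nazarov--Treil--Volberg, Tolsa) then bounds, for $A = B(x,r)\cap F$,
$$\bigl\|T_{K,\mu|_A}\bigr\|_{L^2(\mu|_A) \to L^2(\mu|_A)} \;\leq\; A\Bigl(\sup_{y,s}\tfrac{\mu(B(y,s)\cap F)}{s} \;+\; \sup_{B\subset A}\tfrac{\|T_{K}(\chi_B \mu|_A)\|_{L^2(\mu|_B)}}{\mu(B)^{1/2}}\Bigr),$$
with both quantities on the right going to $0$ uniformly in $x$ as $r \to 0$.

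The main technical obstacle is the local $T(1)$ step, i.e.\ the quantitative reduction of the restricted operator norm to local growth and testing constants. This step relies essentially on the oddness of $K$ (to run the symmetrization trick in the non-doubling $T(1)$ framework) and on \eqref{eqdif8} to transfer the oscillation control from the smoothly truncated operator on $F_2$ to the sharply truncated operator that appears in the operator-norm definition. Once both $F_1$ and $F_2$ are in hand, $F = F_1 \cap F_2$ satisfies both (a) and (b), which concludes the plan. This is essentially the strategy of \cite{Ruiz-Tolsa} (previously carried out for the Cauchy transform in \cite{tol04}).
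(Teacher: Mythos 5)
The paper does not actually prove Lemma \ref{norma}; it states explicitly that the result ``is proven (in more generality) in \cite[Lemma 3]{Ruiz-Tolsa}'' and was first established for the Cauchy transform in \cite{tol04}. So what you are offering is a reconstruction of the argument in those references, which you acknowledge.

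Your treatment of (a) is correct and complete: Egorov applied to the monotone family $\sup_{s\leq r}\mu(B(\cdot,s))/s$ on the finite-measure set $\spt\mu$ gives $F_1$, and the inclusion $B(x,r)\subset B(y,2r)$ for $y\in B(x,r)\cap F_1$ upgrades uniformity over $F_1$ to uniformity over all of $\Cbb$.

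For (b), you correctly identify the two essential ingredients, namely an Egorov-type extraction and a quantitative non-homogeneous $T(1)$/$Tb$ theorem for odd kernels, and you are candid that the ``local $T(1)$ step'' is where the difficulty lies. But as sketched, there is a genuine gap precisely at that point. You propose to control via Egorov the oscillation $\sup_{0<\varepsilon'\leq\varepsilon}|T_{K,(\varepsilon)}\mu - T_{K,(\varepsilon')}\mu|$ on a set $F_2$, and then assert that the testing constant
$\sup_{B\subset A}\|T_K(\chi_B\mu|_A)\|_{L^2(\mu|_B)}/\mu(B)^{1/2}$, with $A=B(x,r)\cap F$, tends to $0$ uniformly. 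These two quantities are not the same thing. The testing function involves $T_K$ applied to the \emph{restricted} measure $\chi_B\,\mu|_F$, not to $\mu$, and the decomposition
$T_{K,\varepsilon}\mu = T_{K,\varepsilon}(\chi_{B\cap F}\mu) + T_{K,\varepsilon}(\chi_{\Cbb\setminus(B\cap F)}\mu)$
leaves an uncontrolled contribution from $\mu|_{B\setminus F}$ and from $\mu$ in an annulus around $B$; smallness of the oscillation of $T_{K,(\varepsilon)}\mu$ on $F_2$ says nothing about these terms. (In addition, the very fact that $T_{K,(\varepsilon)}\mu$ is Cauchy $\mu$-a.e.\ for a general odd CZ kernel, under $L^2(\mu)$-boundedness and zero density, is itself a nontrivial ingredient that needs to be cited or derived.) In \cite[Lemma 3]{Ruiz-Tolsa} and in \cite{tol04}, the Egorov/exhaustion step is applied to a local quantity that \emph{is} the testing quantity the $Tb$ theorem needs, evaluated on the restricted measure over shrinking balls, which is exactly the step your plan elides. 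So the proposal captures the right strategy and tools, but the bridge from the oscillation control on $F_2$ to the restricted-measure testing bound, which is the substantive content of part (b), is not closed.
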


The next lemma is proven in \cite[Lemma 8]{Ruiz-Tolsa}, in  a more general context too.

\begin{lemma}\label{lemsuc}
Let $\mu$ be a measure in $\Cbb$ with compact support and linear growth.
Let $F=\spt\mu $ and, for $j=1,\ldots,d$, let $K^j$ be a kernel satisfying the conditions in \eqref{eq1}, and denote
$T^j=T_{K^j,\mu}$, $T^j_{(\varepsilon)}=T_{K^j,\mu,(\varepsilon)}$ and $T^j_{(*)} = T_{K^j,\mu,(*)}$.
 Suppose that
\begin{itemize}
\item [(a)]$\theta_\mu(x)=\lim_{r\rightarrow 0}\dfrac{\mu(B(x,r))}{r}=0$
uniformly on $x \in F$,
\item [(b)]$\lim_{r \rightarrow 0} \|T^j
\|_{L^2(\mu|B(x,r))\to L^2(\mu|B(x,r))}=0$ uniformly on $x \in
F$ for $j=1,\ldots,d$.
\end{itemize}
Let $f$ be a bounded function supported on F such that
$\|T^j_{(*)}f\|_{L^\infty(\Cbb)}<\infty$ for all
$j,\, \varepsilon > 0$. Then,
given $0<\tau \leq 1$, there exists $\delta>0$ and a function $g$
supported on F satisfying, for each $j=1,...,d$,
\begin{itemize}
\item [(i)]
 $\int g\,d\mu=\int f\,d\mu$ and $0\leq g \leq
\|f\|_{L^\infty (\mu)}+\tau,$

\item[(ii)] \, $\|T_{(*)}^j g\|_{L^\infty(\Cbb)}
\leq \|T_{(*)}^j f\|_{L^\infty(\Cbb)}+\tau$,

\item[(iii)]
\begin{equation*}\label{succ1}
|T^j_{(\varepsilon)} g(x)-T^j_{(\varepsilon)} g(y)|\leq \tau, \quad \mbox{if $|x-y|\leq \delta$ and $\varepsilon>0$},
\end{equation*}

\item[(iv)]
and
\begin{equation*}\label{succ2}
|T^j_{(\varepsilon)} g(x)-T^j_{(\varepsilon)} g(y)|\leq \sup_{\varepsilon'>0}|T^j_{(\varepsilon')}
f(x)-T^j_{(\varepsilon')} f(y)|+ \tau, \quad \forall \, x,y \in \Cbb,
\varepsilon>0.
\end{equation*}
\end{itemize}
\end{lemma}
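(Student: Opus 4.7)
The strategy is to build $g$ by averaging $f$ locally with respect to $\mu$ at a scale small enough that the smallness hypotheses (a) and (b) are both active. Given the target $\tau$, pick a small parameter $\eta\in(0,\tau)$ to be fixed at the end, and then choose $\delta_0>0$ so that for every $x\in F$ and every $j\in\{1,\dots,d\}$,
$$\mu(B(x,\delta_0))\le\eta\,\delta_0\quad\text{and}\quad\|T^j\|_{L^2(\mu|B(x,\delta_0))\to L^2(\mu|B(x,\delta_0))}\le\eta.$$
Take a bounded-overlap covering of $F$ by balls $B_i=B(x_i,\delta_0)$ with $x_i\in F$, and a subordinate $C^\infty$ partition of unity $\{\varphi_i\}$ with $\|\na\varphi_i\|\le A/\delta_0$. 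Set
$$c_i=\frac{\int f\,\varphi_i\,d\mu}{\int \varphi_i\,d\mu},\qquad g=\chi_F\sum_i c_i\,\varphi_i.$$
Each $c_i$ is a weighted average of $f$, so $0\le g\le\|f\|_{L^\infty(\mu)}$, and a summation gives $\int g\,d\mu=\int f\,d\mu$; this establishes (i) (the extra $\tau$ in the sup-norm bound is pure slack).

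The core of the argument is a uniform-in-$\varepsilon$ pointwise estimate
$$\|T^j_{(\varepsilon)}g-T^j_{(\varepsilon)}f\|_{L^\infty(\Cbb)}\le A_1\,\eta\,\|f\|_{L^\infty(\mu)}.$$
Writing $T^j_{(\varepsilon)}(g-f)=\sum_i T^j_{(\varepsilon)}\nu_i$ with $\nu_i=\varphi_i(c_i-f)\mu$, each $\nu_i$ is supported in $B_i$ and has zero total mass. Fix $z\in\Cbb$ and split the sum according to whether $z\notin 2B_i$ or $z\in 2B_i$. For the \emph{far} indices, the cancellation $\nu_i(\Cbb)=0$ and the bound $|\na K^j(w)|\le A/|w|^2$ yield
$$|T^j_{(\varepsilon)}\nu_i(z)|\le\frac{A\,\delta_0\,\|\nu_i\|}{\dist(z,B_i)^2}\le\frac{A\,\delta_0\,\mu(B_i)\,\|f\|_\infty}{\dist(z,B_i)^2},$$
and bounded overlap together with the linear growth $\mu(B_i)\le A_0\delta_0$ then reduces the sum to a geometric one, bounded by $A\eta\|f\|_\infty$. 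For the boundedly many \emph{near} indices with $z\in 2B_i$, hypothesis (b) is invoked: via a Cotlar-type duality (regularize $\delta_z$ by convolution with an approximate identity and pair with $\nu_i$), the smallness of the localized $L^2$ operator norm of $T^j$ on $5B_i$ yields $|T^j_{(\varepsilon)}\nu_i(z)|\lesssim\eta\|f\|_\infty$ as well.

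With this estimate in hand, choose $\eta$ so that $A_1\eta\|f\|_\infty\le\tau/2$. Then (ii) follows from the triangle inequality applied to $T^j_{(*)}g\le T^j_{(*)}f+\tau/2$, and (iv) is the same triangle inequality applied pointwise at $x$ and $y$. Property (iii) needs one extra step: shrink $\delta$ below $\delta_0$, so that the contributions from the \emph{far} indices in $T^j_{(\varepsilon)}g(x)-T^j_{(\varepsilon)}g(y)$ (each of which is smooth at scale $\delta_0$) oscillate by at most $\tau/2$ when $|x-y|\le\delta$, while the \emph{near} indices are already absorbed into the bound above. The \emph{main obstacle} is the near-term estimate, namely upgrading an $L^2(\mu|5B_i)$ operator bound to a pointwise $L^\infty$ bound on $T^j_{(\varepsilon)}\nu_i(z)$ uniformly in $\varepsilon$. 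This is the technical heart of Lemma~8 in \cite{Ruiz-Tolsa}; it relies crucially on the mean-zero property $\nu_i(\Cbb)=0$, the smallness of $\eta$, and the $A_0$-linear growth of $\mu$ to produce a self-improving bound that absorbs the singular behavior of the kernel at the diagonal.
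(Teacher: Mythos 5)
The paper does not prove this lemma itself; it cites \cite[Lemma 8]{Ruiz-Tolsa} (and, for $d=1$ with smooth truncations, \cite[Lemma 3.3]{tol04}), so there is no in-paper proof to compare against. Assessing your attempt on its own merits, the construction of $g$ by local $\mu$-averaging at scale $\delta_0$ is natural and in the spirit of those references, and (i), the far-term estimate, and the deduction of (iv) from a uniform bound on $T^j_{(\varepsilon)}(g-f)$ are all sound. But there are two genuine gaps.

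First, the near-term bound $|T^j_{(\varepsilon)}\nu_i(z)|\lesssim\eta\|f\|_\infty$ for $z\in2B_i$ is only asserted. The ``Cotlar-type duality'' sketch doesn't obviously go through: the point $z$ need not lie in $\spt\mu$, so regularizing $\delta_z$ and pairing against $\nu_i$ in $L^2(\mu|5B_i)$ requires an $L^2(\mu)$-representative of $\delta_z$ whose norm involves $\mu(B(z,\rho))^{-1/2}$; this yields an average of $T^j_{(\varepsilon)}\nu_i$ over a small ball, not the pointwise value, and the normalization constant is uncontrolled. Nor does the mean-zero property of $\nu_i$ help for $z\in2B_i$, since the kernel $\varphi(\cdot/\varepsilon)K^j$ is not close to constant on $B_i$. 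This is exactly the step where hypothesis (b) must be converted into a pointwise statement, and it is not done.

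Second, and more seriously, your argument for (iii) is incomplete. Splitting $T^j_{(\varepsilon)}g(x)-T^j_{(\varepsilon)}g(y)$ into far and near indices, the far part is indeed Lipschitz at scale $\delta_0$, but the near part $\sum_{i:\,x\in 2B_i}c_iT^j_{(\varepsilon)}(\varphi_i\mu)(x)$ concerns $g$ \emph{alone}, not $g-f$, so it is not ``absorbed into the bound above.'' Estimating this near part using only hypothesis (a) gives
$$\Big|\sum_{i\ \mathrm{near}}c_i\,T^j_{(\varepsilon)}(\varphi_i\mu)(x)\Big|\ \lesssim\ \|f\|_\infty\int_{\varepsilon/2<|x-w|<3\delta_0}\frac{d\mu(w)}{|x-w|}\ \lesssim\ \|f\|_\infty\,\eta\,\log\frac{\delta_0}{\varepsilon},$$
which blows up as $\varepsilon\to0$. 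Thus, as written, your construction does not yield (iii) (nor, for the same reason, (ii)) uniformly in $\varepsilon$. To close this gap one must either invoke hypothesis (b) to eliminate the logarithm at the level of $g$ itself, or modify the construction of $g$ so that the near contribution has extra cancellation across dyadic scales; your writeup does neither, and the appeal to \cite{Ruiz-Tolsa} is doing all the real work precisely at this point.
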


Let us remark that, in fact, in \cite[Lemma 8]{Ruiz-Tolsa} the lemma above is stated for the ``sharply'' truncated operators $T^j_{\varepsilon}$, instead of smoothly truncated ones $T^j_{(\varepsilon)}$. However, the same proof also works for the operators $T^j_{\varepsilon}$. An analog of the lemma above for the Cauchy transform
(with $d=1$) with smooth truncations is proven in \cite[Lemma 3.3]{tol04}.
\vspace{2mm}
\vspace{2mm}

\noindent{\bf Construction of $h$.}
We follow quite closely the arguments in \cite[Lemma 3.4]{tol04}.
Let $\mu$ be as above, so that $\lim_{r\to 0}\frac{\mu(B(z,r))}r = 0$ for all $z\in\spt\mu$  and $T_1=T_{K_1,\mu}$ and $T_2=T_{K_2,\mu}$ are bounded in
$L^2(\mu)$.
By Lemma \ref{norma} there are subsets $F_j \subset E$ such that
$\lim_{r \rightarrow 0}{\frac{\mu(B(x,r)\cap F_j)}{r}}=0$
uniformly on $x \in \Cbb$ and
\begin{equation}\label{normz}
\lim_{r \rightarrow 0} \|T_{K_j,\mu,\varepsilon} \|_{L^2(\mu|B(x,r)\cap
F_j) \to L^2(\mu|B(x,r)\cap F_j)}=0,
\end{equation}
uniformly on $x \in \Cbb$ for $j=1,2$, such that the set
$F:=\cap F_j$ satisfies $\mu(F)\geq \mu(E)/2$.

By Lemma \ref{lemoks} there exists function $h_1$ supported on $F$, with
$0\leq h_1\leq 1$, $\|T_{j,(*)} h_1\|_{L^\infty(\Cbb)}\leq 1$ for $j=1,2$, and $\int
h_1\,d\mu\geq A^{-1}\mu(F)$. We set $\delta_1=1$.

For $n\geq1$, we set $\tau_n=2^{-n}$, and  given a positive
bounded function $h_n$ supported on $F$ and $\delta_n>0$, by means of
Lemma \ref{lemsuc} we construct a function $h_{n+1}$ also supported
on $F$, so that $\int h_{n+1}\,d\mu = \int
h_n\,d\mu$, $0\leq h_{n+1}\leq \|h_n\|_{L^\infty(\mu)}+\tau_n$, such that for $j=1,2$,
$\|T_{j,(*)} h_{n+1}\|_{L^\infty(\Cbb)}\leq \|T_{j,(*)} h_n\|_{L^\infty(\Cbb)} +
\tau_n$, and moreover
\begin{equation}
\label{wwcond1'} |T_{j,(\varepsilon)} h_{n+1}(x) - T_{j,(\varepsilon)} h_{n+1}(y)| \leq \tau_n
\quad \mbox{for $|x-y|\leq\delta_{n+1}$ and all $\varepsilon>0$}
\end{equation}
(where $\delta_{n+1}\leq \delta_n$ is some constant small enough),
and
\begin{equation} \label{wwcond2'} |T_{j,(\varepsilon)} h_{n+1}(x) - T_{j,(\varepsilon)} h_{n+1}(y)| \leq
\sup_{\varepsilon'>0} |T_{j,(\varepsilon')} h_n(x) - T_{j,(\varepsilon')} h_n(y)| + \tau_n
\end{equation}
for all $x,y\in\Cbb$, $\varepsilon>0$.

Let $h$ be a weak $*$ limit in $L^\infty(\mu)$ of a subsequence
$\{h_{n_k}\}_k$. Clearly, $h$ is a positive bounded function such that
\begin{equation}\label{eqfi10}
\int h\,d\mu =\int h_1\,d\mu \gtrsim\mu(F)\gtrsim \alpha(E).
\end{equation}
Also, for each $\varepsilon>0$ and $x\in \Cbb$,
$$
T_{j,(\varepsilon)} h_{n_k}(x) \to T_{j,(\varepsilon)}
h(x)\quad \mbox{as}\quad k\to\infty.
$$
Since
$$\|T_{j,(\varepsilon)} h_n\|_{L^\infty(\Cbb)} \leq \|T_{j,(*)} h_1\|_{L^\infty(\Cbb)} +
\sum_{i=1}^{n-1} 2^{-i}\leq 2$$ for all $n$, we deduce $\|T_{j,(\varepsilon)}
h\|_{L^\infty(\Cbb)} \leq 2$,  which implies that
\begin{equation}\label{eqfi11}
\|T_{j,\e}
h\|_{L^\infty(\Cbb)} \lesssim 1\quad\mbox{ for all $\e>0$,}
\end{equation}
by \eqref{eqdif8}.

On the other hand, by \eqref{wwcond1'}, if $|x-y|\leq \delta_n$, then
$$|T_{j,(\varepsilon)} h_n(x) - T_{j,(\varepsilon)} h_n(y)|\leq2^{-n}$$
for all $\varepsilon>0$. From \eqref{wwcond2'}, for $k\geq n$ we get
$$|T_{j,(\varepsilon)} h_k(x) - T_{j,(\varepsilon)} h_k(y)| \leq \sup_{\varepsilon'>0}|T_{j,(\varepsilon')} h_n(x) - T_{j,(\varepsilon')}
h_n(y)| + \sum_{i=n}^{k-1} 2^{-i} \leq 2^{-n+2},$$ assuming
$|x-y|\leq \delta_n$. Thus,
\begin{equation} \label{eqcont}
|T_{j,(\varepsilon)} h(x) - T_{j,(\varepsilon)} h(y)| \leq 2^{-n+2} \quad \mbox{if $|x-y|\leq
\delta_n$.}
\end{equation}

Consider now the family of functions $\{T_{j,(\varepsilon)} h\}_{\varepsilon>0}$ on
$\bar{B}(0, R)$, where $R$ is big enough so that $E\subset
B(0,R-1)$. This is a family of functions which is uniformly
bounded and equicontinuous on $\bar{B}(0,R)$, by \eqref{eqcont}. By
the Ascoli-Arzel\`a theorem, there exists a sequence
$\{\varepsilon_n\}_n$, with $\varepsilon_n\to 0$, such that $T_{j,(\varepsilon_n)} h$
converges uniformly on $\bar{B}(0,R)$ to some continuous function
$g_j$. It is easily seen that $g_j$ coincides with $T_j h$ ${\mathcal L}^2$-a.e.
in $\bar{B}(0,R)$ and
$\|g_j\|_{L^\infty(\bar{B}(0,R))}\leq2$. By continuity, it is clear then that $g_j$ and $T_j\mu$ coincide 
$\bar{B}(0,R)\setminus E$.
Since $T_{j} h$ is also
continuous in $\Cbb\setminus \bar{B}(0,R-\frac12)$, we deduce that the function which equals $g_j$ on $E$ and $T_j h$ in the complement of $E$ is continuous in
 the whole complex plane, as wished. Together with  \eqref{eqfi10} and \eqref{eqfi11}, this
 shows that $
 c_0\,h$ satisfies the required properties stated at the beginning of this section, for some constant $c_0>0$ depending at most on $K_1$ and $K_2$.
\fiproof

{\section{The $C^1$-approximation criteria for classes of functions and the $C^1$-approximation
by ${\cal L}$-polynomials}}

It is worth mentioning (see, for instance, \cite[Theorem 1.12]{mpf2012} and it's proof) that
the zero sets for capacity $\al_1$ (or $\al$) are precisely the sets of $C^1$-{\it removable singularities}
for solutions of the equation ${\cal L}u = 0$.

Standard arguments (see, for instance, \cite[Proof of Theorem 6.1]{P}) allow to deduce from Theorem \ref{main-th} or Theorem \ref{main-th2}
the following $C^1$-approximation criterion for "classes of functions".

\begin{theor}\label{ForClasses} For a compact set $X$ in $\Cbb$ the following conditions are equivalent:
 \begin{itemize}
 \item[(a)] $\; {\cal A}^1_{\cal L}(X) = C^1_{\cal L}(X)\,$;
 \item[(b)] $\; \al_1(D\setminus X^0)=\al_1(D\setminus X)$ for any bounded open set $D\,$;
 \item[(c)] there exist $A>0$ and $k\geq 1$ such that
$$
\al_1(B(a,\delta)\setminus X^0)\leq A\al_1(B(a,k\delta)\setminus X)
$$
for each disc $B(a,\delta)\,$.
\end{itemize}
\end{theor}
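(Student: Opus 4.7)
The strategy is to prove the cycle $(a)\Rightarrow(c)\Rightarrow(b)\Rightarrow(c)$ together with $(c)\Rightarrow(a)$; the heart of the matter is the equivalence $(c)\Leftrightarrow(a)$, which is an application of Theorem \ref{main-th2}, while $(b)\Leftrightarrow(c)$ is a consequence of the countable semiadditivity of $\al_1$ guaranteed by Theorem \ref{main-th1} together with the curvature characterization \eqref{eqalph}. The whole argument follows the standard Vitushkin--Paramonov blueprint of \cite[Proof of Theorem 6.1]{P}.

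The implication $(b)\Rightarrow(c)$ is immediate with $A=k=1$ upon choosing $D=B({\bf a},\delta)$. For $(c)\Rightarrow(b)$, the bound $\al_1(D\setminus X^0)\ge\al_1(D\setminus X)$ is monotonicity; for the reverse, decompose $D\setminus X^0=(D\setminus X)\cup(D\cap\partial X)$, cover $D\cap\partial X$ by a Besicovitch family of discs $\{B({\bf a}_i,\delta_i)\}$ with $B({\bf a}_i,k\delta_i)\subset D$ and bounded overlap of the dilates, and chain semiadditivity with (c):
\begin{equation*}
\al_1(D\cap\partial X)\lesssim\sum_i\al_1(B({\bf a}_i,\delta_i)\setminus X^0)\lesssim\sum_i\al_1(B({\bf a}_i,k\delta_i)\setminus X)\lesssim\al_1(D\setminus X).
\end{equation*}

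For $(c)\Rightarrow(a)$, fix $f\in C^1_{\cal L}(X)$ and verify clause (b) of Theorem \ref{main-th2}. For each disc $B=B({\bf a},r)$, Lemma \ref{lemC1Vit} applied with $\f=\f_r^{\bf a}$ shows that the distribution $T'=\f_r^{\bf a}\cdot{\cal L}f$ has support in $B\cap\Spt({\cal L}f)\subset B\setminus X^0$ (since ${\cal L}f=0$ in $X^0$), that $\Phi*T'={\cal V}_{\f_r^{\bf a}}(f)\in C^1(\Rbb^2)$, and
\begin{equation*}
\|\na(\Phi*T')\|\le A\,\om(\na f,r)\,\|\na\f_r^{\bf a}\|\,r\le A\,\om(\na f,r)\,r^{-2}.
\end{equation*}
By the definition of $\al_1$ and the integration-by-parts identity $\langle T',1\rangle=\pm c_{11}\int\pa_1 f\cdot\pa_2\f_r^{\bf a}\,d{\bf x}$ (the same identity used in the (a)$\Rightarrow$(c) step of Theorem \ref{main-th2}),
\begin{equation*}
\Bigl|\int\pa_1 f\cdot\pa_2\f_r^{\bf a}\,d{\bf x}\Bigr|\lesssim\om(\na f,r)\,r^{-2}\al_1(B({\bf a},r)\setminus X^0)\lesssim\om(\na f,r)\,r^{-2}\al_1(B({\bf a},kr)\setminus X),
\end{equation*}
the last inequality being (c). This is exactly clause (b) of Theorem \ref{main-th2} with the $k$ from (c) and $\om(r)=A'\om(\na f,r)$, so $f\in{\cal A}^1_{\cal L}(X)$.

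For $(a)\Rightarrow(c)$, fix $B({\bf a},\delta)$ and a near-extremal admissible $T$ for $\al_1(B({\bf a},\delta)\setminus X^0)$. Multiplying $F=\Phi*T$ by a smooth cutoff $\chi$ equal to $1$ on a large ball containing $X\cup\ov{B({\bf a},k\delta)}$ produces $\tilde F\in BC^1\cap C^1_{\cal L}(X)$ (the key point being ${\cal L}\tilde F={\cal L}F=T$ on $\{\chi=1\}\supset X$, whence ${\cal L}\tilde F=0$ on $X^0$). By (a) there exist $f_n\in BC^1$, each ${\cal L}$-analytic in an open $U_n\supset X$, with $\|\tilde F-f_n\|_1\to 0$; choosing $\phi\in C_0^\infty(B({\bf a},k\delta))$ with $\phi\equiv 1$ on $\ov{B({\bf a},\delta)}\supset\Spt T$, the distributions $T_n=\phi\cdot{\cal L}f_n$ are supported in $B({\bf a},k\delta)\setminus U_n\subset B({\bf a},k\delta)\setminus X$, their potentials $\Phi*T_n={\cal V}_\phi(f_n)$ lie in $C^1$ with $\|\na(\Phi*T_n)\|$ bounded uniformly in $n$ (Lemma \ref{lemC1Vit}), and $\langle T_n,1\rangle\to\langle T,1\rangle$ by distributional convergence ${\cal L}f_n\to{\cal L}\tilde F$ tested against the compactly supported $\phi$. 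Hence $|\langle T,1\rangle|\lesssim\al_1(B({\bf a},k\delta)\setminus X)$, yielding (c). The main obstacle is the technical bookkeeping for the cutoffs in this last step---specifically, choosing the radius of $\chi$ large enough that $\|\na\tilde F\|$ is dominated by an absolute multiple of $\|\na F\|\le 1$, and controlling $\|\na\phi\|$ by the distance $\dist(\Spt T,\partial B({\bf a},k\delta))$; these are standard Runge-type manipulations.
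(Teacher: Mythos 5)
Your overall strategy matches what the paper indicates (it cites the "standard arguments" of \cite[Proof of Theorem 6.1]{P} applied to Theorem \ref{main-th2}), and your $(c)\Rightarrow(a)$ via checking clause (b) of Theorem \ref{main-th2} for $\f=\f_r^{\bf a}$ is exactly right, as is the trivial $(b)\Rightarrow(c)$. There is, however, a genuine gap in your route to $(b)$.

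Condition $(b)$ asserts the \emph{exact equality} $\al_1(D\setminus X^0)=\al_1(D\setminus X)$. Your $(c)\Rightarrow(b)$ argument through the Besicovitch covering and the chain
$\al_1(D\cap\partial X)\lesssim\sum_i\al_1(B({\bf a}_i,\delta_i)\setminus X^0)\lesssim\sum_i\al_1(B({\bf a}_i,k\delta_i)\setminus X)\lesssim\al_1(D\setminus X)$
only yields comparability $\al_1(D\setminus X^0)\leq C\,\al_1(D\setminus X)$ with some $C>1$, which is \emph{not} $(b)$. Furthermore, the last link in that chain is not semiadditivity at all: you need a \emph{reverse} (quasi-additivity) estimate $\sum_i\al_1(E_i)\lesssim\al_1(\bigcup_i E_i)$ for bounded-overlap families, a nontrivial fact that does not follow from countable semiadditivity and would require a separate argument (e.g.\ via the curvature characterization \eqref{eqalph} and control of the cross terms in $c^2(\sum_i\mu_i)$). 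So as written this direction does not close.

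The correct way to obtain the exact equality is to prove $(a)\Rightarrow(b)$ directly and then close the cycle as $(b)\Rightarrow(c)\Rightarrow(a)\Rightarrow(b)$. The essential device, which your $(a)\Rightarrow(c)$ sketch does \emph{not} use, is the exact cancellation
${\cal V}_\phi\bigl(\chi(F-c)\bigr)=\Phi*(\phi\,{\cal L}(\chi(F-c)))=\Phi*(\phi\,T)=\Phi*T=F$,
valid whenever $\phi\equiv 1$ on $\Spt T$ and $\chi\equiv 1$ on $\Spt\phi$ and $c$ is any constant. With this, writing $\tilde F=\chi(F-c_R)$ (subtracting the right constant $c_R$ so that, by the expansion \eqref{eq3.4}--\eqref{eq3.4.1} and Lemma \ref{estadm}, $\|F-c_R\|_{\Spt\nabla\chi}\lesssim|c_0|$ rather than $\lesssim R$, hence $\|(F-c_R)\nabla\chi\|\to 0$ as $R\to\infty$), one gets $\tilde F\in C^1_{\cal L}(X)$ with $\|\nabla\tilde F\|\leq 1+\varepsilon$. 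Approximating $\tilde F$ by $f_n$ via $(a)$ and setting $T_n=\phi\,{\cal L}f_n$, the estimate from Lemma \ref{lemC1Vit} applied to $f_n-\tilde F$ (not to $f_n$) gives $\|\nabla(\Phi*T_n)\|=\|\nabla{\cal V}_\phi(f_n)\|\leq\|\nabla F\|+\|\nabla{\cal V}_\phi(f_n-\tilde F)\|\leq 1+o_n(1)$, and $\langle T_n,1\rangle\to\langle T,1\rangle$. Letting $n\to\infty$ then yields $|\langle T,1\rangle|\leq\al_1(D\setminus X)$ \emph{exactly}, hence $\al_1(D\setminus X^0)\leq\al_1(D\setminus X)$, while monotonicity gives the reverse. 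Your use of the blunt bound from Lemma \ref{lemC1Vit} on $f_n$ alone (giving $\|\nabla(\Phi*T_n)\|$ merely \emph{bounded}) is adequate for $(c)$, which allows a constant $A$, but is too crude to deliver the equality in $(b)$.
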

From the last theorem, Theorem \ref{main-th1} and Vitushkin's criteria for uniform rational approximations
(see \cite[Ch. V, \S 2-3]{Vi} or \cite[\S 1]{P95}, including the definitions of $R(X)$ and $A(X)$) the next corollary follows directly.

\begin{corollary} For a compact set $X$ in $\Cbb$ the following conditions are equivalent:
 \begin{itemize}
 \item[(a)] $\; {\cal A}^1_{\cal L}(X) = C^1_{\cal L}(X)\,$;
 \item[(b)] $R(X)=A(X)\,$.
\end{itemize}
\end{corollary}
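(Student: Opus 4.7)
The plan is to chain together three equivalences: the characterization of $\mathcal{A}^1_{\cal L}(X) = C^1_{\cal L}(X)$ in Theorem \ref{ForClasses}, the comparability $\alpha_1 \asymp \alpha$ in Theorem \ref{main-th1}, and Vitushkin's classical criterion for $R(X) = A(X)$. Since these are all already stated (or cited) in the paper, the argument is mostly a bookkeeping exercise.

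First, I would invoke Theorem \ref{ForClasses}, specifically the equivalence (a)$\Leftrightarrow$(c), to reformulate (a) as the purely capacitary statement: there exist constants $A>0$ and $k\geq 1$ such that
\[
\alpha_1(B(a,\delta)\setminus X^\circ) \leq A\,\alpha_1(B(a,k\delta)\setminus X)
\]
for every disc $B(a,\delta)$. Next, I would apply Theorem \ref{main-th1}: since $\alpha_1 \asymp \alpha$ with absolute (depending only on $\cal L$) multiplicative constants, this inequality for $\alpha_1$ is equivalent to the same inequality for the classical $C$-analytic capacity $\alpha$, at the cost of adjusting the constant $A$ (the radius constant $k$ stays the same, or can be absorbed if one prefers).

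Finally, I would quote Vitushkin's criterion for uniform rational approximation (as formulated in \cite[Ch.~V, \S 2-3]{Vi} or \cite[\S 1]{P95}), which asserts that $R(X) = A(X)$ holds if and only if this same local instability condition is satisfied for $\alpha$. Combining the three equivalences gives (a)$\Leftrightarrow$(b). There is no real obstacle here: all the analytic work lies in Theorems \ref{main-th1}, \ref{main-th}, and \ref{ForClasses}, and the role of this corollary is simply to translate the $\cal L$-analytic approximation problem into the language of uniform rational approximation via the geometric characterization of $\alpha_1$ in terms of $\alpha$.
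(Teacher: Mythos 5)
Your proposal matches the paper's argument: the corollary is obtained exactly by chaining Theorem \ref{ForClasses} (to pass from $\mathcal{A}^1_{\cal L}(X) = C^1_{\cal L}(X)$ to the capacitary instability condition for $\alpha_1$), Theorem \ref{main-th1} (to replace $\alpha_1$ by $\alpha$), and Vitushkin's classical criterion for $R(X) = A(X)$ in terms of $\alpha$. The paper states this as a direct consequence and gives no further details, so your bookkeeping is precisely what is intended.
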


Applying \cite[Theorem 1.3]{{tol04}} and the previous result, we obtain also the following corollary.

\begin{corollary}
Let $X$ be a compact set in $\Cbb$ with inner boundary $\pa_i X$. If $\al(\pa_i X)=0$
then $\; {\cal A}^1_{\cal L}(X) = C^1_{\cal L}(X)\,$.
\end{corollary}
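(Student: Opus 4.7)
The plan is simply to chain together two already-established results. First I would invoke Theorem 1.3 of \cite{tol04}, which asserts that for a compact set $X \subset \Cbb$ the vanishing $\al(\pa_i X) = 0$ of the classical $C$-analytic capacity of the inner boundary is sufficient to force the rational approximation equality $R(X) = A(X)$. (This is Tolsa's resolution, for the continuous case, of the Vitushkin-type problem, proved via the curvature-of-measures characterization of $\al$ recalled in \eqref{eqalph} and via semiadditivity of $\al$.)

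Once $R(X) = A(X)$ is granted, the previous corollary of this section applies directly: it gives the equivalence of ${\cal A}^1_{\cal L}(X) = C^1_{\cal L}(X)$ with $R(X) = A(X)$. Combining, we obtain ${\cal A}^1_{\cal L}(X) = C^1_{\cal L}(X)$, which is exactly the claim.

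It is worth emphasising that no new argument is needed beyond citing these two statements, but the hypothesis is phrased in terms of the classical $\al$ rather than in terms of the new ${\cal L}C^1$-capacity $\al_1$. The bridge between the two capacities is Theorem \ref{main-th1}, which establishes the two-sided comparability $\al_1 \asymp \al$. This comparability is the key input: it lets us replace $\al_1$ by $\al$ in the capacitary criterion of Theorem \ref{ForClasses}, and thereby route the $C^1$-approximation problem for ${\cal L}$-analytic functions through the much more developed theory of uniform rational approximation on plane compact sets. There is no genuine obstacle in the present corollary; all of the real work has already been carried out in Theorem \ref{main-th1}, in the Vitushkin--Tolsa characterization of $R(X) = A(X)$, and in the derivation of the previous corollary from Theorem \ref{ForClasses}.
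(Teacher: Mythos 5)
Your proof is correct and follows exactly the route the paper indicates: invoke Tolsa's inner-boundary result (Theorem 1.3 of \cite{tol04}) to pass from $\al(\pa_i X)=0$ to $R(X)=A(X)$, and then apply the preceding corollary equating $R(X)=A(X)$ with ${\cal A}^1_{\cal L}(X)=C^1_{\cal L}(X)$. The additional remarks about Theorem \ref{main-th1} providing the bridge $\al_1\asymp\al$ are accurate context but are already subsumed in the derivation of that preceding corollary.
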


For a compact set $X$ in $\Cbb$ and a function $f$ of class $C^1$ in some neighbourhood of $X$, define
the $C^1$-Whitney norm of $f$ on $X$ \cite{Wh}:
$$
||f||_{1X} = \inf\{||F||_1\,:\, F \in BC^1(\Cbb)\,,\, F|_X=f|_X\,,\, \nabla F|_X = \nabla f|_X\}\,.
$$
Denote by ${\cal P}_{\cal L}$ the space of all polynomials $p$ of real variables, such that ${\cal L}p \equiv 0$
(see \cite[Proposition 2.1]{PF99}).
The following analog of the well known Mergelyan theorem \cite{Mer}  is a direct corollary of Theorems \ref{main-th1}
and \ref{ForClasses}, the fact that $\al_1(D)\asymp \diam (D)$ for any domain $D$ (because the same holds for the capacity $\alpha$), and Runge-type theorems (see \cite[Theorem 3 and Proposition 2]{BP}).
\begin{theor} For a compact set $X$ in $\Cbb$ the following conditions are equivalent:
 \begin{itemize}
 \item[(a)] for each $f \in C^1_{\cal L}(X)\,$ and $\e > 0$ there is $p \in {\cal P}_{\cal L}$ with $||f-p||_{1X} < \e\,$;
 \item[(b)] $\Cbb \setminus X$ is connected.
\end{itemize}
\end{theor}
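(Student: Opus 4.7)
The strategy combines the class-level criterion of Theorem~\ref{ForClasses}, the estimate $\alpha_1(D)\asymp\diam D$ for domains $D$ (valid since the same holds for $\alpha$, by Theorem~\ref{main-th1}), and the Runge-type theorem for ${\cal L}$-analytic functions from \cite[Theorem~3 and Proposition~2]{BP}. Assuming $\Cbb\setminus X$ is connected, I first verify condition~(c) of Theorem~\ref{ForClasses} with $k=2$. Let $B=B({\bf a},\delta)$ be any disc with $B\setminus X^{\circ}\neq\emptyset$. Then also $B\setminus X\neq\emptyset$, since any point of $B\cap\partial X$ is a limit of points of $\Cbb\setminus X$ lying inside the open ball $B$. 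Pick $y\in B\setminus X$ and join $y$ through the open, connected, unbounded set $\Cbb\setminus X$ by a continuous path to a point outside $B({\bf a},2\delta)$; the portion of this path up to its first exit from $B({\bf a},2\delta)$ is a connected set of diameter at least $\delta$ contained in $(\Cbb\setminus X)\cap B({\bf a},2\delta)$, so by $\alpha_1\gtrsim\diam$ on connected sets
$$
\alpha_1(B({\bf a},2\delta)\setminus X)\gtrsim\delta\gtrsim\alpha_1(B)\geq\alpha_1(B\setminus X^{\circ}).
$$
Hence ${\cal A}^1_{\cal L}(X)=C^1_{\cal L}(X)$. Given $f\in C^1_{\cal L}(X)$ and $\varepsilon>0$, pick $g$ that is ${\cal L}$-analytic in a neighborhood of $X$ with $\|f-g\|_{1X}<\varepsilon/2$, and then apply the Runge-type theorem from \cite{BP} (which uses exactly the connectedness of $\Cbb\setminus X$) to obtain $p\in{\cal P}_{\cal L}$ with $\|g-p\|_{1X}<\varepsilon/2$.

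\smallskip
\noindent\textbf{Direction (a)$\Rightarrow$(b).} Contrapositively, let $U$ be a bounded component of $\Cbb\setminus X$ and fix $a\in U$. By Lemma~\ref{solrep}, $f(z)=\Phi_{\cal L}(z-a)$ belongs to $C^1_{\cal L}(X)$. If (a) held, there would exist $p_n\in{\cal P}_{\cal L}$ with $\|f-p_n\|_{1X}\to 0$, so in particular $p_n\to f$ uniformly on $\partial U\subset X$. Each $p_n$ is ${\cal L}$-analytic on all of $\Cbb$, and the maximum modulus principle for ${\cal L}$-analytic functions on the bounded domain $U$ (a consequence of the Poisson-type representation built from the explicit $\Phi_{\cal L}$ in Lemma~\ref{solrep}) then forces $\{p_n\}$ to be uniformly Cauchy on $\overline U$. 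Its uniform limit would be ${\cal L}$-analytic in $U$ with boundary values $f|_{\partial U}$, contradicting the non-removable singularity of $f$ at $a\in U$: testing ${\cal L}f$ against a smooth bump $\chi\equiv 1$ near $a$ with $\Spt\chi\subset U$ gives $\langle{\cal L}f,\chi\rangle=\chi(a)=1$, while the same quantity vanishes for any ${\cal L}$-analytic function on $U$.

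\smallskip
\noindent\textbf{Main obstacle.} Most of the substantive work is absorbed into the Runge-type theorem from \cite{BP}; my task is really the geometric bridge that turns the topological hypothesis ``$\Cbb\setminus X$ connected'' into the capacitary condition~(c) of Theorem~\ref{ForClasses}, and this is precisely where the estimate $\alpha_1(D)\asymp\diam D$ is used. In the reverse direction, the delicate point is establishing a maximum modulus principle for the complex-coefficient operator ${\cal L}$, which must be extracted from the structure of $\Phi_{\cal L}$ rather than invoked from the classical real-coefficient elliptic theory.
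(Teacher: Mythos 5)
Your direction (b)$\Rightarrow$(a) is sound and matches the paper's intended proof: verify condition~(c) of Theorem~\ref{ForClasses} from connectedness of $\Cbb\setminus X$ using the diameter estimate, then finish with the Runge-type theorem of \cite{BP}. One small point of hygiene: the estimate $\al_1(D)\asymp\diam D$ is asserted for \emph{domains} (open connected sets); the path you construct is a compact arc, so you should thicken it inside the open set $\Cbb\setminus X$ to an open connected subset of $B({\bf a},2\delta)\setminus X$ of diameter comparable to $\delta$ before applying the estimate (and truncate slightly before reaching $\pa B({\bf a},2\delta)$). These are cosmetic and the argument is correct.

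The direction (a)$\Rightarrow$(b), however, has a genuine gap. There is \emph{no} maximum modulus principle for ${\cal L}$-analytic functions when the coefficients are complex, and in particular no Poisson-type sup-norm estimate by boundary values. For the Bitsadze operator ${\cal L}=\ov{\pa}^2$, the function $u(z)=1-z\ov z=1-|z|^2$ satisfies ${\cal L}u=0$, vanishes identically on $\pa B(0,1)$, yet $u(0)=1$. So the claim that $\{p_n\}$ is uniformly Cauchy on $\ov U$ because it is uniformly Cauchy on $\pa U$ is simply false for general ${\cal L}$. Moreover, even granting such convergence to some ${\cal L}$-analytic $g$ on $U$ with $g|_{\pa U}=f|_{\pa U}$, your final test-function computation applies ${\cal L}$ to $f$, not to $g$; the two need not agree inside $U$ (uniqueness for the Dirichlet problem also fails for Bitsadze), so $\langle{\cal L}f,\chi\rangle=1$ while $\langle{\cal L}g,\chi\rangle=0$ is not a contradiction --- they are different functions.

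A repair that avoids any maximum principle for ${\cal L}$ itself is to exploit the factorization ${\cal L}=c_{11}\pa_1\pa_2$ (resp.\ $c_{11}\pa_1^2$). If $p\in{\cal P}_{\cal L}$ then $\pa_1 p$ is, after the nondegenerate linear change of variable $\La_s(z)=z_s$ (with $s=1$ if $\la_1\neq\la_2$, $s=2$ if $\la_1=\la_2$), an ordinary holomorphic polynomial of the complex variable $w=z_s$. On the other hand, by Lemma~\ref{solrep}, for $f=\Phi_{\cal L}(\cdot-a)$ one has $\pa_1 f=k_1/(z-a)_s$, i.e.\ a Cauchy kernel centered at $\La_s(a)$ after the change of variables. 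If $\|f-p_n\|_{1X}\to0$ then $\pa_1 p_n\to\pa_1 f$ uniformly on $X$, hence the corresponding holomorphic polynomials $\widetilde q_n$ converge uniformly on $\La_s(X)$ to $k_1/(w-\La_s(a))$; since $\La_s$ is a linear homeomorphism, $\Cbb\setminus\La_s(X)$ has a bounded component $\La_s(U)$ containing $\La_s(a)$. Now the classical argument (apply the \emph{genuine} maximum principle for holomorphic polynomials to $(w-\La_s(a))\widetilde q_n(w)$, which tends to the constant $k_1\neq0$ on $\pa\La_s(U)$ yet vanishes at $\La_s(a)$) gives the contradiction. This is where the complex structure actually enters --- via the holomorphic factors $\pa_1,\pa_2$ --- rather than through any nonexistent maximum principle for ${\cal L}$.
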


This result strengthens \cite[Theorem 1.1 (4)]{PF99}, since the norm $||\cdot||_{1X}$ is stronger than the norm
$$
||f||_{1Xw} = \max\{||f||_X\,,\, ||\nabla f||_X \}\,,
$$
considered in \cite{PF99}.

As a plan for our subsequent work in this themes we formulate the following conjecture.
\begin{conj}
Theorems \ref{main-th1} and \ref{main-th} have their direct analogs for all dimensions
(in $\Rbb^N$ for all $N \in \{3, 4, \dots \}$). In Theorem \ref{main-th1} we just have
to take, instead of the capacities $\al$ and $\ga$, the $C^1$- and $\Lip^1$-harmonic capacities
respectively (see \cite{Ruiz-Tolsa}).
\end{conj}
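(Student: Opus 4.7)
The plan is to prove both analogs in parallel to the two-dimensional case, leaning on the higher-dimensional codimension-$1$ Riesz transform machinery together with the results of \cite{Ruiz-Tolsa}.

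For the $\Rbb^N$-version of Theorem \ref{main-th}, the strategy is to replay the scheme of Section 3. First, the $L$-oscillation ${\cal O}^L_B(f)$ has a natural definition via the ${\cal L}$-harmonic mean-value kernel $\psi_r^{\bf a}$ on balls in $\Rbb^N$, and satisfies the analog of Lemma \ref{hat-osc}. Lemma \ref{lemC1Vit} carries over almost verbatim: the operator ${\cal V}_\varphi f = \Phi_{\cal L} * (\varphi {\cal L} f)$ still obeys $\|\nabla {\cal V}_\varphi f\| \leq A\,\omega(\nabla f,r)\|\nabla\varphi\|\,r$, because the same integration-by-parts identity applies once $f$ is replaced by its affine Taylor remainder at ${\bf x}$, and the second derivatives $\partial_i\partial_j\Phi_{\cal L}$ are standard Calderon-Zygmund kernels in $\Rbb^N$. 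The Laurent-type expansion of Lemma \ref{solpot} admits a natural analog in terms of homogeneous derivatives $\partial^\beta \Phi_{\cal L}$. The approximation machinery of Section 3 then proceeds through a partition of unity and the ``complete group'' argument, with the modification that a complete group now combines $N$ coordinate chains rather than $2$, chosen so that the associated first-order polar coefficients span $N$ linearly independent vectors; this is what one needs in order to cancel all first-order principal parts of each localized piece $f_j$.

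For the $\Rbb^N$-version of Theorem \ref{main-th1}, one first observes that, after a nondegenerate linear change of variables, the components of $\nabla\Phi_{\cal L}$ form a tuple of smooth odd Calderon-Zygmund kernels of homogeneity $-(N-1)$, one of which (up to constants) is the vectorial Riesz kernel $x/|x|^N$. The comparability of $\al_{1\cal L}$ and $\ga_{1\cal L}$ with the $C^1$- and $\Lip^1$-harmonic capacities $\al_H$, $\ga_H$ would follow from the characterizations of the latter via measures of $(N-1)$-dimensional linear growth whose Riesz transform is bounded in $L^2(\mu)$ (see \cite{Ruiz-Tolsa} and the references to the work of Nazarov, Tolsa and Volberg contained there), combined with the fact that such $L^2(\mu)$-boundedness of the Riesz transform implies the $L^2(\mu)$-boundedness of every CZ operator $T_{K_s,\mu}$ arising from $\nabla\Phi_{\cal L}$. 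Once this is available, the remainder of Subsections 4.2-4.4 transfers with essentially routine changes: the higher-dimensional analogs of Lemmas \ref{lemoks}, \ref{norma} and \ref{lemsuc} are already contained in \cite{Ruiz-Tolsa}, and the function $h$ in the $\al$-direction is obtained by iterating these approximation lemmas and applying the Ascoli-Arzela theorem as in Subsection \ref{sek}.

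The hardest step is establishing the higher-dimensional substitute for Theorem \ref{teopubmat}. In the plane this rests on Melnikov's curvature identity, via which $T(1)$-type arguments transfer $L^2(\mu)$-boundedness of the Cauchy transform to every other odd smooth CZ operator. No such direct curvature identity is available in $\Rbb^N$, but for codimension-$1$ Riesz transforms on measures of $(N-1)$-dimensional linear growth a version of this principle is known through the Nazarov-Tolsa-Volberg theory and is already exploited in \cite{Ruiz-Tolsa}. Verifying that this machinery covers every kernel arising from $\nabla\Phi_{\cal L}$ for an arbitrary elliptic ${\cal L}$, including the multidimensional analog of the degenerate case $\lambda_1=\lambda_2$, is the principal technical burden. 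A secondary obstacle is the continuity step of Subsection \ref{sek}: one must verify equicontinuity of $\{T_{K_s,\mu,(\varepsilon)}h\}_{\varepsilon>0}$ on $\ov{B({\bf 0},R)}\subset\Rbb^N$ uniformly in $\varepsilon$, so that the resulting continuous limit of $T_{K_s,\mu}h$ on the complement of $E$ goes through exactly as in the planar case.
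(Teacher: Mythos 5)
This statement is a \emph{conjecture}, explicitly labelled as such and offered at the end of Section~5 as ``a plan for our subsequent work''; the paper contains no proof of it, so there is nothing in the paper to compare your argument against. What you have written is therefore not being measured against an existing proof but must stand on its own, and on its own it is a roadmap rather than a proof: you correctly locate the obstacles, but you do not resolve them.

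Two of the gaps you yourself flag are genuinely open in your write-up. First, the higher-dimensional substitute for Theorem~\ref{teopubmat} --- the claim that $L^2(\mu)$-boundedness of the codimension-one Riesz transform, for $\mu$ with $(N-1)$-dimensional linear growth, forces $L^2(\mu)$-boundedness of every smooth odd Calderon--Zygmund operator of the same homogeneity, in particular of each component of $\nabla\Phi_{\cal L}$ --- is not a consequence you derive; you defer to ``the Nazarov--Tolsa--Volberg theory... already exploited in \cite{Ruiz-Tolsa}'' and then concede that verifying it for arbitrary elliptic ${\cal L}$ ``is the principal technical burden.'' In the plane this transfer is a sharp theorem with its own substantial proof; its $\Rbb^N$ analog for the specific vector-valued kernels $\nabla\Phi_{\cal L}$ (with complex coefficients and including the degenerate double-root regime) is precisely what the conjecture is about, and stating that it ought to follow from the machinery in \cite{Ruiz-Tolsa} is not the same as showing that it does. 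Second, the ``complete group'' argument of Section~3 is not a dimension-free combinatorial device: in $\Rbb^2$ it is built on the factorization ${\cal L}=c_{11}\partial_1\partial_2$ (or $c_{11}\partial_1^2$), on the explicit Laurent-type expansions of Lemma~\ref{solpot} in the coordinates $z_1,z_2$, and on the fact that the two first-order residue vectors in \eqref{eq3.5.15}--\eqref{eq3.5.16} are linearly independent in $\Cbb^2$. For $N\ge 3$ there is no analogous factorization of a general second-order elliptic ${\cal L}$ into first-order operators, the fundamental solution is no longer (a branch of) a logarithm or a rational function of linear complex coordinates, and the expansion of $\Phi_{\cal L}*T$ has a different structure; your one-sentence claim that one ``combines $N$ coordinate chains'' so that ``the associated first-order polar coefficients span $N$ linearly independent vectors'' presupposes an expansion and a linear-independence lemma that you have neither stated nor proved. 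Until these two points --- and the precise form of Lemma~\ref{solpot} and of the chain cancellation in $\Rbb^N$ --- are carried out, the conjecture remains a conjecture.
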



\medskip

\noindent {\bf P.V. Paramonov \\

\medskip
\noindent
Mechanics and Mathematics Faculty of\\
Moscow State University.\\
119991 Moscow, Russian Federation.\\

\medskip
\noindent
Mathematics and mechanics Faculty of\\
St-Petersburg State University. \\
Saint Petersburg, Russian Federation.\\

\medskip
\noindent
\tt e-mail: petr.paramonov@list.ru\\

\bigskip\bigskip

\noindent {\bf X. Tolsa \\

\medskip
\noindent
ICREA, \\
Passeig Llu\'{\i}s Companys 23 08010 Barcelona, Catalonia,\\

\medskip
\noindent
Departament de Matem\`atiques and BGSMath,\\
Universitat Aut\`onoma de Barcelona
\\
08193 Bellaterra (Barcelona), Catalonia

\medskip
\noindent
\tt e-mail: xtolsa@mat.uab.cat}

\end{document}